\theoremstyle{plain}
\newtheorem{thm}{Theorem}[section]
\newtheorem{lem}[thm]{Lemma}
\newtheorem{prop}[thm]{Proposition}
\newtheorem{cor}[thm]{Corollary}
\newtheorem{conj}[thm]{Conjecture}
\theoremstyle{definition}
\newtheorem{defi}{Definition}[section]
\theoremstyle{remark}
\newtheorem{rem}{Remark}[section]
\newtheorem*{rem*}{Remark}
\newcommand\R{\mathbb{R}}
\newcommand\Sn{\mathbb{S}^{n-1}}
\newcommand\N{\mathbb{N}}
\newcommand\Z{\mathbb{Z}}
\newcommand\Q{\mathbb{Q}}
\DeclareMathOperator{\vspan}{span}
\DeclareMathOperator{\diag}{diag}
\newcommand\upd{\textup{d}}
\newcommand\upT{\textup{T}}
\newcommand\upp{\textup{per}}
\newcommand\upl{\textup{loc}}
\newcommand\upe{\textup{e}}
\newcommand\upDir{\textup{Dir}}
\newcommand\upini{\textup{ini}}
\newcommand\tl{\triangleleft}
\newcommand{\vect}[1]{\bm{\mathbf{#1}}} 
\newcommand\veL{\vect{L}} 
\newcommand\vef{\vect{f}} 
\newcommand\veB{\vect{B}} 
\newcommand\veu{\vect{u}} 
\newcommand\vev{\vect{v}} 
\newcommand\vew{\vect{w}} 
\newcommand\vep{\vect{p}} 
\newcommand\ver{\vect{r}} 
\newcommand\vez{\vect{0}} 
\newcommand\veo{\vect{1}} 
\newcommand\vei{\vect{\infty}} 
\newcommand\caP{\mathcal{P}} 
\newcommand\caC{\mathcal{C}} 
\newcommand\cbQ{\vect{\mathcal{Q}}} 
\newcommand\cbR{\vect{\mathcal{R}}} 
\newcommand\dcbP{\diag(\vect{\caP})} 
\newcommand\clOmper{[0,1]^{1+n}} 
\newcommand\FPH{F\"{o}ldes--Pol\'{a}\v{c}ik's Harnack } 
\newcommand\muwed{\mu^{\wedge}} 
\newcommand\muvee{\mu^{\vee}} 
\newcommand{\revision}[1]{{#1}}
\title[Pulsating traveling wave solutions of KPP systems in periodic media]{Planar pulsating traveling wave solutions of non-cooperative Fisher--KPP systems in space-time periodic media}
\author{L\'{e}o Girardin}
\address[L. G.]{CNRS, Institut Camille Jordan, Universit\'{e} Claude Bernard Lyon-1, 43 boulevard du 11 novembre 1918, 69622 Villeurbanne Cedex, France}
\email{leo.girardin@math.cnrs.fr}
\author{Gr\'{e}goire Nadin}
\address[G. N.]{CNRS, Institut Denis Poisson, Universit\'{e} d'Orl\'{e}ans, Rue de Chartres, 45067 Orl\'{e}ans Cedex 2, France}
\email{gregoire.nadin@cnrs.fr}
\begin{document}
\begin{abstract}
    Non-cooperative Fisher--KPP systems with space-time periodic coefficients are motivated
    for instance by models for structured populations evolving in periodic environments.
    This paper is concerned with entire solutions describing the invasion of open space by a 
    persistent population at constant speed. These solutions are important in the understanding of 
    long-time behaviors for the Cauchy problem. Adapting methods developed for scalar equations 
    satisfying the comparison principle as well as methods developed for systems with homogeneous 
    coefficients, we prove, in each spatial direction, the existence of a critical speed such that:
    there exists no almost planar generalized transition waves with a smaller speed;
    if the direction is rational, each rational speed not smaller than the critical speed
    is the speed of a \revision{planar pulsating traveling wave with time and transverse space periodicity};
    if the coefficients are homogeneous in space, each speed not smaller than the critical speed
    is the speed of a \revision{planar pulsating traveling wave with time periodicity}.
\end{abstract}

\keywords{KPP nonlinearities, space-time periodicity, reaction--diffusion system, traveling waves}
\subjclass[2010]{35K40, 35K57, 92D25.}
\maketitle

\section{Introduction}

\subsection{General framework}
This paper is concerned with reaction--diffusion systems of the form 
\begin{equation}\label{sys:KPP}\tag{KPP}
    \dcbP\veu =\veL\veu-(\veB\veu)\circ\veu,
\end{equation}
where: $\veu:\R\times\R^n\to\R^N$ is a vector-valued function of size $N\in\N^\star$, 
with a time variable $t\in\R$ and a space variable $x\in\R^n$, 
$n\in\N^\star=\N\backslash\{0\}$ being the spatial dimension; 
each operator of the family $\vect{\caP}=(\caP_i)_{i\in[N]}$, with $[N]=\N\cap[1,N]$, has the form
\[
    \caP_i:u\mapsto\partial_t u -\nabla\cdot\left(A_i\nabla u\right)+q_i\cdot\nabla u,
\]
with $A_i:\R\times\R^n\to\R^{n\times n}$ and $q_i:\R\times\R^n\to\R^n$ periodic functions of 
$(t,x)$, respectively square matrix-valued and vector-valued;
$\veL,\veB:\R\times\R^n\to\R^{N\times N}$ are square matrix-valued periodic functions of
$(t,x)$; and $\circ$ denotes the Hadamard product between two vectors in $\R^N$.

The standing assumptions on $\vect{\caP}$, $\veL$ and $\veB$ are the following.
\begin{enumerate}[label=$({\mathsf{A}}_{\arabic*})$]
    \item \label{ass:ellipticity} The family $(A_i)_{i\in[N]}$ is \textit{uniformly elliptic}:
    \[
        0<\min_{i\in[N]}\min_{y\in\Sn}\min_{(t,x)\in\R\times\R^n}\left(y\cdot A_i(t,x)y\right).
    \]
    \item \label{ass:cooperative} The matrix $\underline{\veL}\in\R^{N\times N}$, whose entries are 
    \[
        \underline{l}_{i,j}=\min_{(t,x)\in\R\times\R^n}l_{i,j}(t,x)\quad\text{for all }(i,j)\in[N]^2,
    \]
    is \textit{essentially nonnegative}: its off-diagonal entries are nonnegative.
    \item \label{ass:irreducible} The matrix $\overline{\veL}\in\R^{N\times N}$, whose entries are 
    \[
        \overline{l}_{i,j}=\max_{(t,x)\in\R\times\R^n}l_{i,j}(t,x)\quad\text{for all }(i,j)\in[N]^2,
    \]
    is \textit{irreducible}: it does not have a stable subspace of the form
    $\vspan(\vect{e}_{i_1},\dots,\vect{e}_{i_k})$, where $k\in[N-1]$, $i_1,\dots,i_k\in[N]$ and $\vect{e}_i=(\delta_{ij})_{j\in[N]}$.
    By convention, $[0]=\emptyset$ and $1\times 1$ matrices are irreducible, even if zero.
    \item \label{ass:KPP} The matrix $\underline{\veB}\in\R^{N\times N}$, whose entries are 
    \[
        \underline{b}_{i,j}=\min_{(t,x)\in\R\times\R^n}b_{i,j}(t,x)\quad\text{for all }(i,j)\in[N]^2,
    \]
    is \textit{positive}: its entries are positive.
    \item \label{ass:smooth_periodic} There exists $\delta\in(0,1)$ such that
	$\veL,\veB\in\caC^{\delta/2,\delta}_\upp(\R\times\R^n,\R^{N\times N})$ and, 
	for each $i\in[N]$, $A_i\in\caC^{\delta/2,1+\delta}_\upp(\R\times\R^n,\R^{n\times n})$ and
    $q_i\in\caC^{\delta/2,\delta}_\upp(\R\times\R^n,\R^n)$. 
    Moreover, $A_i=A_i^\upT$ for each $i\in[N]$.
\end{enumerate}
The precise definition of the functional spaces appearing in \ref{ass:smooth_periodic} will
be clarified below, if not clear already. As usual in such a smooth and 
generic framework, the symmetry of the diffusion matrices in \ref{ass:smooth_periodic} is actually given for free.
No symmetry assumption is made on $\veL$. \revision{Another consequence of the smoothness
of the coefficients is that} the irreducibility of $\overline{\veL}$ in \ref{ass:irreducible}
is equivalent to the irreducibility of the space-time average of $\veL$.

In this paper, we are interested in solutions defined in $\R\times\R^n$, referred to as entire solutions.
More precisely, we will seek classical entire solutions modeling invasions of open space at constant speed in a given direction.

\subsection{Organization of the paper}
The remainder of Section 1 is devoted to a detailed introduction and to the statement of the main results. 
Section 2 contains technical preliminaries. Sections 3, 4 and 5 contain the proofs.

\subsection{State of the art}

The notion of \textit{planar traveling waves} has first been investigated in the pioneering papers \cite{Fisher_1937, KPP_1937}
in the framework of homogeneous scalar Fisher--KPP equations ($N=1$ and the coefficients are independent of space and time). 
Such solutions are entire solutions that can have the form $u(t,x)=p(x\cdot e+ct)$, with $p>0$, $p(-\infty)=0$ and $p(+\infty)=1$
(where $1$ is the unique positive stationary solution of the scalar equation). 
Here, $-e\in\Sn$ is referred to as the direction of propagation, $c$ as the speed of the traveling wave, and $p$ as its profile. 
Such solutions are important to describe the attractor of the Cauchy problem depending on the form of the initial condition; 
in particular, the one with minimal speed attracts, in a sense, the solutions of the Cauchy problem associated with compactly 
supported initial values.

When the coefficients of the scalar Fisher--KPP equation depend periodically on space and remain homogeneous in time, 
or vice versa, a new notion is needed in order to encapsulate this periodicity. 
\revision{In \cite{Shigesada_Kawasaki_Teramoto_1986}, where periodic media were investigated for the first time,
in the framework of space periodic coefficients in a one-dimensional space,
a solution $u$ was referred to as a traveling periodic wave if it satisfied $u(t,x)=u\left(t+\frac{L}{c},x+L\right)$, 
where $L$ was the period of the coefficients and $c$ the wave speed.
Later, a compatible functional form $u(t,x)=p(x\cdot e+ct,x)$ was introduced by Xin in \cite{Xin_1991,Xin_1991_2} for scalar 
bistable and combustion reaction--diffusion equations in arbitrary spatial dimension. In this functional form, $p>0$, $p(z,\cdot)$ 
is periodic for all $z\in \R$, and the two limits $p(-\infty,\cdot)$ and $p(+\infty,\cdot)$ are the two steady states connected
by the wave. In \cite{Xin_1991,Xin_1991_2}, such solutions were simply referred to as traveling waves in periodic media. 
This functional form is robust and can be used for other scalar reaction--diffusion equations with space periodic coefficients, 
including Fisher--KPP equations. It can also be adapted for time periodic coefficients, where it 
becomes $u(t,x)=p(x\cdot e+ct,t)$ and where the limiting steady states become time periodic entire solutions.
Other names for such solutions have also been used in the literature, pulsating traveling front and pulsating traveling wave 
in particular. In this paper, for clarity, we only use the terminology \textit{planar pulsating traveling wave}.}
The existence of such solutions for scalar Fisher--KPP equations has been first derived in 
\cite{Hudson_Zinner_1995,Berestycki_Hamel_2002,Berestycki_Hamel_Roques_2} (space periodic case) and in 
\cite{FrejacquesPhD} (time periodic case).

When the coefficients are periodic in space and in time, scalar Fisher--KPP planar pulsating traveling waves now take 
the form $u(t,x)=p(x\cdot e+ct,t,x)$, where $(t,x)\mapsto p(z,t,x)$ is periodic for all $z$, $z\mapsto p(z,t,x)$ is 
increasing for all $(t,x)$, $p(-\infty,\cdot,\cdot)=0$ and $p(+\infty,\cdot,\cdot)$ is the unique uniformly positive 
entire solution \cite{Nadin_2009,Nolen_Rudd_Xin}. The construction of such solutions requires some care since they are
somehow the trace of a function $p(z,t,x)$ over the plane $z=x\cdot e+ct$.

The usual tool when studying entire solutions in scalar reaction--diffusion models is the \textit{comparison principle}, 
namely the property that ordered initial conditions yield perpetually ordered solutions. 
It is well known that the comparison principle does not necessarily hold for systems. 
Systems that satisfy the comparison principle are referred to as \textit{monotone} or \textit{cooperative}; 
consistently, systems that do not satisfy the comparison principle are referred to as \textit{non-monotone} or \textit{non-cooperative}.
A weakly coupled system of size $N$ is monotone if it satisfies the so-called \textit{Kamke condition}: 
for each $i\in[N]$, the $i$-th reaction term is nondecreasing with respect to each $u_j$, $j\in[N]\backslash\{i\}$
\cite{Cantrell_Cosner_03,Volpert_Volpert_Volpert}.

For monotone KPP-type systems (\textit{e.g.}, \eqref{sys:KPP} but with $\veB=\vect{I}$), 
arguments relying upon the comparison principle have proved
the existence of traveling waves when the coefficients are homogeneous \cite{Li_Weinberger_,Volpert_Volpert_Volpert} 
or of pulsating traveling waves when the coefficients are periodic \cite{Weinberger_200,Fang_Yu_Zhao,Du_Li_Shen_2022}. 

However, due to the assumption \ref{ass:KPP}, \eqref{sys:KPP} is \revision{globally} monotone if and only if $N=1$.
When $N\geq 2$, it is known that $N$, $\veL$ and $\veB$ can be chosen in such a way that a comparison principle holds true
in a neighborhood of $\vez$ or away from it \cite[Proposition 2.5]{Cantrell_Cosner_Yu_2018}. But such choices are not generic 
and never lead to a global comparison principle.

For non-cooperative systems, a variational approach is sometimes possible when the reaction term has a gradient structure and $\dcbP=\partial_t-\Delta$ \cite{Muratov_2004,Risler_2006,Muratov_Novaga_2008}. The system \eqref{sys:KPP} does not satisfy these assumptions in general. Even with $\dcbP=\partial_t-\Delta$ and $\veL$ symmetric, the term $-\veB\veu\circ\veu$ does not have a gradient structure. 

New arguments are therefore needed in order to derive the existence of (pulsating) traveling waves for \eqref{sys:KPP} when $N>1$. 
Such existence results have been derived before in the following special cases:
\begin{itemize}
    \item $N=2$, $n=1$, no advection and space-time homogeneous coefficients \cite{Griette_Raoul,Morris_Borger_Crooks};
    \item $n=1$, no advection, space-time homogeneous coefficients and pointwise essentially positive $\veL$ \cite{Wang_2011};
    \item $n=1$, no advection and space-time homogeneous coefficients \cite{Girardin_2016_2};
    \item $N=2$, $n=1$, $A_1=A_2=1$, no advection, space periodic coefficients and pointwise essentially positive $\veL$ \cite{Alfaro_Griette};
    \item $N=2$, $n=1$, space periodic coefficients and pointwise essentially positive $\veL$ \cite{Griette_Matano_2021}.
\end{itemize}

In this paper, we present a unifying approach for non-cooperative KPP systems of arbitrary size $N\in\N^\star$, in 
arbitrary spatial dimension $n\in\N^\star$, with space-time periodic coefficients, with advection
and with minimal positivity requirements on $\veL$. 
This was raised as an open question in a previous work by the first author \cite{Girardin_2023}.
To the best of our knowledge, the present paper is the very first study on invasions of open space modeled
by entire solutions of reaction--diffusion non-cooperative systems with space-time periodic coefficients. 

\subsection{Motivations}

Systems of the form \eqref{sys:KPP} arise naturally in population dynamics modeling when the population
under consideration has to be divided into discrete classes. Extensive references and discussions on these models can be
found in \cite{Girardin_2016_2,Girardin_Mazari_2022}. Here we only suggest briefly three examples of application.

Alfaro--Griette \cite{Alfaro_Griette} proved the existence of a pulsating traveling wave for the following system:
\begin{equation*}
    \begin{dcases}
	\partial_t u = \partial_{xx}u + u(r_u(x)-\gamma_u(x)(u+v)) + \mu(x) v - \mu(x) u, \\
	\partial_t v = \partial_{xx}v + v(r_v(x)-\gamma_v(x)(u+v)) + \mu(x) u - \mu(x) v.
    \end{dcases}
\end{equation*}
This system was conceived as a model in evolutionary epidemiology for spatio-temporal dynamics,
in a one-dimensional spatially periodic medium, of a pathogen population with two phenotypes 
$u$ and $v$. 
Each phenotype $i\in\{u,v\}$ has a growth rate $r_i=r_i(x)$, is subjected to Lotka--Volterra 
competition exerted by the total population $u+v$ at a rate $\gamma_i=\gamma_i(x)$, 
and mutates into the other phenotype at rate $\mu=\mu(x)$. 
All coefficients are uniformly positive. The one-dimensional spatial periodicity was presented as a first approach
of spatial heterogeneities, which are known to influence greatly host--parasite systems.

In a theoretical evolutionary biology article, Ress et al. \cite{Ress_Traulsen_Pichugin_2022} analyzed the following ODE model:
\begin{equation*}
    \begin{dcases}
	\displaystyle\frac{\upd u_1}{\upd t} = -b_1 u_1 - d_1 u_1 + N b_N\pi_1 u_N - u_1\sum_{j=1}^Nk_{1,j}u_j, \\
	\displaystyle\frac{\upd u_i}{\upd t} = (i-1)b_{i-1}u_{i-1} -i b_i u_i - d_i u_i + N b_N\pi_i u_N - u_i\sum_{j=1}^Nk_{i,j}u_j & \text{for }2\leq i\leq N. 
    \end{dcases}
\end{equation*}
This is a model for the population dynamics of cell groups that grow in size and fragment, giving rise to new
cell groups. The cell group densities $u_i$ contain groups of $i$ cells and are characterized by division rates $b_i>0$ (thus the
cell group grows at rate $i b_i$), death rates $d_i>0$, fragmentation rates $\pi_i\geq 0$, and competition rates $k_{i,j}>0$. 
The cell group density $u_N$ contains groups with the maximal number of cells. 
When a cell group reaches after a division the size $N+1$, it immediately fragments and gives rise to $\pi_1$
groups of size $1$, $\pi_2$ groups of size $2$, etc., so that $\sum_{i=1}^N i\pi_i=N+1$.

Gueguezo et al. \cite{Gueguezo_Doumate_Salako_2024} investigated the following model in a bounded spatial domain
$\Omega\subset\R^n$ with homogeneous Neumann boundary conditions:
\begin{equation*}
    \begin{dcases}
	\partial_t u_1 = d_1\Delta u_1 + r(t,x)u_2-s(t,x)u_1-(a(t,x)+b(t,x)u_1+c(t,x)u_2)u_1, \\
	\partial_t u_2 = d_2\Delta u_2 + s(t,x)u_1 - (e(t,x)+f(t,x)u_2+g(t,x)u_1)u_2. \\
    \end{dcases}
\end{equation*}
This system models the dynamics of a stage-structured population with juveniles $u_1$ and adults $u_2$.
The parameters $r,s,a,b,c,e,f,g$ are positive periodic functions of time and heterogeneous functions of space. In particular,
$r$ is the birth rate of juveniles and $s$ is the maturation rate measuring the transition between the juvenile stage
and the adult stage.

Our framework makes it possible to complexify these three models in various biologically meaningful ways and 
to consider invasion waves. 

We especially point out that, in the second example, the corresponding matrix $\veL$ is as follows:
\begin{equation*}
    \veL=
    \begin{pmatrix}
	-b_1-d_1 & 0 & \dots & 0 & Nb_N\pi_1 \\
	b_1 & -2b_2-d_2 & \dots & 0 & Nb_N\pi_2 \\
	0 & 2b_2 & \dots & 0 & Nb_N\pi_3 \\
	\vdots & \vdots & \ddots & \ddots & \vdots \\
	0 & 0 & \dots & (N-1)b_{N-1} & Nb_N\pi_N - Nb_N - d_N
    \end{pmatrix}
\end{equation*}
It is neither symmetric nor essentially positive. It is irreducible if $\pi_1>0$ and reducible otherwise.
If, for instance, coefficients are time periodic due to external stress and fragmentations give rise to groups of size $1$
during the first half-period but not during the second half-period, then $\veL$ is not always irreducible but its time average
is irreducible indeed. This exemplifies the relevancy of the assumption \ref{ass:irreducible}.

\subsection{Nondimensionalization}

Let $(T,L_1,\dots,L_n)$ be a family of $n+1$ positive numbers and denote temporarily by $\widetilde{\cdot}$ the composition 
with the diffeomorphism $(t,x)\mapsto (Tt, L_1 x_1, \dots, L_n x_n)$. For any entire solution $\veu$ of \eqref{sys:KPP},
\begin{equation*}
    \begin{split}
	\frac{1}{T}\partial_t\widetilde{\veu} = & \ \diag\left( \diag\left( \frac{1}{L_\alpha} \right)_{\alpha\in[n]}\nabla\cdot \left( \widetilde{A_i}\diag\left( \frac{1}{L_\alpha} \right)_{\alpha\in[n]}\nabla \right) \right)\widetilde{\veu} \\
	& \ -\diag\left( \widetilde{q_i}\cdot\diag\left( \frac{1}{L_\alpha} \right)_{\alpha\in[n]}\nabla \right)\widetilde{\veu} 
	+\widetilde{\veL}\widetilde{\veu}-(\widetilde{\veB}\widetilde{\veu})\circ\widetilde{\veu}.
    \end{split}
\end{equation*}
Therefore, up to redefining the coefficients of the system in a way that preserves \ref{ass:ellipticity}--\ref{ass:smooth_periodic},
we assume without loss of generality and from now on that the coefficients of \eqref{sys:KPP} are $1$-periodic 
with respect to each temporal or spatial variable.

\subsection{Entire solutions and invasions of open space}

It is natural to introduce a \textit{wave direction} $-e\in\Sn$ (or direction for short) and a
\textit{wave speed} $c\in\R$ (or speed for short), and to consider the change of variable corresponding to the moving
frame. Denoting by $\cdot^{\tl}$ the composition with the diffeomorphism $(t,x)\mapsto(t,x-cte)$, an
entire solution $\veu$ of \eqref{sys:KPP} satisfies:
\begin{equation}
    \partial_t\veu^{\tl}-\diag\left(\nabla\cdot\left(A^{\tl}_i\nabla\right)\right)\veu^{\tl}+\diag\left(\left(q^{\tl}_i+ce\right)\cdot\nabla\right)\veu^{\tl} = \veL^{\tl}\veu^{\tl}-(\veB^{\tl}\veu^{\tl})\circ\veu^{\tl}.
    \label{sys:KPP_moving_frame}\tag{$\text{KPP}^{\tl}$}
\end{equation}

\subsubsection{Almost planar generalized transition waves}

A first class of entire solutions, relevant to describe the planar invasion of $\vez$ by positive population densities,
can be defined as follows, in the spirit of \cite{Berestycki_Hamel_2012}, but taking into
account that the asymptotics as $x\cdot e\to+\infty$ are unclear and at least not unique
(refer for instance to \cite{Girardin_2017,Morris_Borger_Crooks} for examples with two stable positive 
steady states).

\begin{defi}\label{defi:GTW}
    A classical entire solution $\veu$ of \eqref{sys:KPP} is an \textit{almost planar generalized transition wave 
    in direction $-e\in\Sn$ with global mean speed $c\in\R$} if:
    \begin{enumerate}
	\item $\veu$ is nonnegative;
	\item $\veu$ is globally bounded in $\R\times\R^n$;
	\item $\veu^{\tl}:(t,x)\mapsto\veu(t,x-cte)$ satisfies the following two limits:
	    \begin{equation}
		\lim_{z\to-\infty}\sup_{(t,x)\in\R\times e^\perp}\max_{i\in[N]}u^{\tl}_i(t,x+ze)=0,
		\label{eq:definition_GTW_limit_negative_infinity}\tag{downstream}
	    \end{equation}
	    \begin{equation}
		\liminf_{z\to+\infty}\inf_{(t,x)\in\R\times e^\perp}\min_{i\in[N]}u^{\tl}_i(t,x+ze)>0.
		\label{eq:definition_GTW_limit_positive_infinity}\tag{upstream}
	    \end{equation}
    \end{enumerate}
\end{defi}

However this notion of solution is in the present context unsatisfying; the space-time periodicity of the coefficients 
of \eqref{sys:KPP} should be somehow inherited by the profile $\veu^{\tl}$ of the wave. We are
therefore seeking almost planar generalized transition waves that satisfy in addition
some form of periodicity.

\subsubsection{Planar pulsating traveling waves}\label{sec:PPTW}

This leads to the notion of planar pulsating traveling wave solution in space-time periodic media. 
For consistency, it should contain as particular cases several notions already investigated in the literature and mentioned above:
\begin{enumerate}
    \item if $N=1$, the notion of scalar pulsating traveling wave solution when coefficients
	are space-time periodic \cite{Nolen_Rudd_Xin,Nadin_2009};
    \item if $N=2$ and $n=1$, the notion of pulsating traveling wave solution when coefficients are 
	space periodic and time homogeneous \cite{Alfaro_Griette,Griette_Matano_2021};
    \item if $N\geq 2$, $n=1$ and coefficients are space-time homogeneous, the notion of (classical)
	traveling wave solution \cite{Griette_Raoul,Morris_Borger_Crooks,Girardin_2016_2,Wang_2011}.
\end{enumerate}

In particular, we recall that a scalar pulsating traveling wave solution 
is an almost planar generalized transition wave in direction $-e$ with global mean speed $c$ and 
such that $u(t,x)=p(x\cdot e +ct,t,x)$, with a profile $p$ which is increasing in its first 
variable and periodic in its last two variables. 

However, in view of the space-time homogeneous case, some monotonicity
loss with respect to the first variable seems inevitable as soon as $N\geq 2$ (\textit{cf.}
\cite{Griette_Raoul} where a non-monotonic profile is exhibited for $N=2$); at best, the 
requirement could be that $\vep$ is monotonic in its first variable in a left half-line 
\cite{Girardin_2016_2}. This leads to the following candidate.

\begin{defi}\label{defi:PTW_whole_space}
    A classical entire solution $\veu$ of \eqref{sys:KPP} is a 
    \revision{\textit{planar pulsating traveling wave in direction $-e\in\Sn$ at speed $c\in\R$ with time and space periodicity}} if:
    \begin{enumerate}
	\item it is an almost planar generalized transition wave in direction $-e$ with global mean speed $c$;
	\item it has the form $\veu(t,x)=\vep(x\cdot e +ct,t,x)$;
	\item with respect to its second and third variable, the wave profile $\vep$ is $1$-periodic.
    \end{enumerate}
\end{defi}

Interestingly, this definition could also encompass the notion of point-to-periodic traveling
wave solution, useful at least for systems with $N\geq 3$, space-time homogeneous coefficients, and
Hopf bifurcations \cite{Girardin_2018}.

Unfortunately, it turns out that the monotonicity loss in a right half-line breaks the known 
method of proof of existence \cite{Nadin_2009,Nolen_Rudd_Xin}. 
\revision{Indeed this method strongly relies on the equality 
$\int_{\R}|\partial_z p(z,t,x)|\upd z = \int_{\R}\partial_z p(z,t,x)\upd z$.}
\revision{Although it could be tempting to restrict the search to solutions that are monotonic in the variable $z$,
or more generally to solutions such that $\partial_z p(\cdot,t,x)$ is integrable in $\R$ uniformly in $(t,x)$, this would fail to capture
many solutions that can be easily observed with numerical simulations, for instance ``point-to-periodic'' traveling waves
$\veu(t,x)=\vep(x\cdot e+ct)$ that connect $\vez$ to a space periodic steady solution.}
Despite substantial efforts, we did not manage to circumvent this obstacle. 
Hence \revision{this paper cannot prove that Definition \ref{defi:PTW_whole_space} does not define an empty set
when $N\geq 2$ and the coefficients of \eqref{sys:KPP} are neither time homogeneous nor
space homogeneous, which is really the problem we have in mind}. 

Instead, we will exploit a weaker notion, inspired in particular by the definition of 
\textit{almost pulsating wave} introduced in \cite{Fang_Yu_Zhao} for monotone systems in 
one-dimensional spaces.

By virtue of the space-time periodicity of the coefficients of \eqref{sys:KPP}, if $e\in\Q^n$ and $c\in\Q$, 
then the coefficients of \eqref{sys:KPP_moving_frame} still satisfy certain space-time periodicity properties.
Indeed, there exists a family $(e'_\alpha)_{\alpha\in[n]}$ such that $e'_n=e$, $e'_\alpha\in\Q^n$ for
all $\alpha\in[n-1]$, and $(e'_\alpha)_{\alpha\in[n-1]}$ is an orthonormal basis of $e^\perp$ 
\cite[Lemma 4.2]{Berestycki_Hamel_Nadin}\revision{, associated with an orthogonal coordinate
system $(x'_\alpha)_{\alpha\in[n-1]}$}. Hence there exists a large integer $M\in\N$ such that 
$Me'_\alpha\in\Z^n$ for any $\alpha\in[n]$ and $cMe\in\Z$, and therefore, for any coefficient $f$ of 
\eqref{sys:KPP_moving_frame} and any family $(\epsilon_{\alpha-1})_{\alpha\in[n+1]}\in\{0,1\}^{n+1}$,
\begin{equation}\label{eq:periodicity_new_variables}
    \begin{split}
	f^{\tl}\left( t+\epsilon_0 M,x+M\sum_{\alpha\in[n]}\epsilon_{\alpha}e'_\alpha \right) & 
	= f\left( t+\epsilon_0 M,x-cte+\sum_{\alpha\in[n]}M\epsilon_{\alpha}e'_\alpha-cM\epsilon_0 e \right) \\
	& = f(t,x-cte) \\
	& = f^{\tl}(t,x)
    \end{split}
\end{equation}
This means that $f^{\tl}$ is $M$-periodic with respect to time and with respect to each new orthogonal spatial
coordinate $x'_\alpha$. 
From now on, we denote $T_{e,c}$ and $L_{e,\alpha}$ the respective minimal periods in the variables $t$ and 
$x'_\alpha$, $\alpha\in[n]$, for the whole (finite) family of coefficients (note that 
$L_e=(L_{e,\alpha})_{\alpha\in[n]}$ does not depend on $c$).

\begin{defi}\label{defi:PTW_transverse_space}
    Let $e\in\Q^n$ and $c\in\Q$.

    A classical entire solution $\veu$ of \eqref{sys:KPP} is a 
    \revision{\textit{planar pulsating traveling wave in direction $-e\in\Sn$ at speed $c\in\R$ with time and transverse 
    space periodicity}} if:
    \begin{enumerate}
	\item it is an almost planar generalized transition wave in direction $-e$ with global mean speed $c$;
	\item $\veu^{\tl}:(t,x)\mapsto\veu(t,x-cte)$ is $T_{e,c}$-periodic with respect to $t$ and, for each $\alpha\in[n-1]$,
    $L_{e,\alpha}$-periodic with respect to the spatial coordinate $x_\alpha'$ of $e^\perp$.
    \end{enumerate}
\end{defi}

\revision{For clarity, from now on we will refer to the solutions defined in Definition \ref{defi:PTW_whole_space} as
planar pulsating traveling wave in direction $-e\in\Sn$ at speed $c\in\R$ with time and \textit{whole} space periodicity.}

Theorems \ref{thm:supercritical_existence} and \ref{thm:critical_existence} below will confirm the existence of
\revision{planar pulsating traveling wave in direction $-e\in\Sn$ at speed $c\in\R$ with time and transverse space periodicity.}

\subsection{The linear part and the nonlinear part}

For brevity, we will denote from now on 
\begin{equation*}
    \cbQ=\dcbP-\veL
\end{equation*}
the linear operator derived from the linearization of \eqref{sys:KPP} at $\veu=\vez$. By virtue of
the assumptions \ref{ass:cooperative}--\ref{ass:irreducible}, this linear operator is cooperative and fully coupled, and 
this will be a key property in the forthcoming analysis of the non-cooperative semilinear system \eqref{sys:KPP}.
On the contrary, the nonlinear remainder of the reaction term $-(\veB\veu)\circ\veu$ plays a secondary role, and results can be generalized with 
minor technical adaptations to reaction terms of the form $\veL(t,x)\veu-\vect{f}\left(t,x,\veu\right)\circ\veu$
satisfying the following assumption of KPP type:
\begin{equation*}
    \forall(t,x)\in\R\times\R^n,\quad
    \begin{dcases}
	\vect{f}(t,x,\vez)=\vez, \\
	\vect{f}(t,x,\vev)\geq\vez\quad\text{if }\vev\geq\vez.
    \end{dcases}
\end{equation*}
Such a generalization can be found in a previous paper by the first author \cite{Girardin_2016_2}.
The motivation of the restriction in the present paper is twofold: on one hand, the form $\veL\veu-\veB\veu\circ\veu$ 
is sufficient for the applications we have in mind; on the other hand, it minimizes the verbosity and highlights
the role of the linear part.

\subsection{Notations}

Generally speaking, notations are chosen consistently with a previous paper on space-time
homogeneous coefficients by the first author \cite{Girardin_2016_2} and with a paper by I. Mazari and the first author
on the principal spectral analysis of $\cbQ$ \cite{Girardin_Mazari_2022}.

In the whole paper, $\N$ is the set of nonnegative integers, which contains $0$.

Unless specified otherwise, time and space periodicities refer to, respectively, $1$-periodicity with respect to $t$ and $1$-periodicity with respect to 
$x_\alpha$ for each $\alpha\in[n]$ (or $1$-periodicity with respect to $x$ for short). 

Vectors in $\R^N$ and matrices in $\R^{N\times N}$ are denoted in bold font. 
Functional operators are denoted in calligraphic typeface (bold if they act on functions valued in $\R^N$).
Functional spaces, \textit{e.g.} $\mathcal{W}^{1,\infty}(\R\times\R^n,\R^N)$, 
are also denoted in calligraphic typeface. A functional space $\mathcal{X}$ denoted with a subscript $\mathcal{X}_\upp$, 
$\mathcal{X}_{t-\upp}$ or $\mathcal{X}_{x-\upp}$ is restricted to functions that are space-time periodic, time periodic or space periodic respectively. Lebesgue and Sobolev spaces restricted to space-time, time or space periodic functions correspond naturally
to integrals over the space-time, time or space periodicity cell respectively. 

For clarity, H\"{o}lder spaces of functions with $k\in\mathbb{N}$ derivatives that are all
H\"{o}lder-continuous with exponent $\alpha\in(0,1)$ are denoted $\caC^{k+\alpha}$; when the domain is $\R\times\R^n$, it 
should be unambiguously understood that $\caC^{k+\alpha,k'+\alpha'}$ denotes the set of functions that have $k$ 
derivatives in time and $k'$ derivatives in space, which are all uniformly $\alpha$-H\"{o}lder-continuous in time and 
$\alpha'$-H\"{o}lder-continuous in space.

For any two vectors $\veu,\vev\in\R^N$, $\veu\leq\vev$ means $u_i\leq v_i$ for all 
$i\in[N]$, $\veu<\vev$ means $\veu\leq\vev$ together with $\veu\neq\vev$ and $\veu\ll\vev$ means $u_i<v_i$ for all $i\in[N]$. If 
$\veu\geq\vez$, we refer to $\veu$ as \textit{nonnegative}; if $\veu>\vez$, as \textit{nonnegative nonzero}; if $\veu\gg\vez$, as 
\textit{positive}. The sets of all nonnegative, nonnegative nonzero, positive vectors are respectively denoted $[\vez,\vei)$,
$[\vez,\vei)\backslash\{\vez\}$ and $(\vez,\vei)$. The vector whose entries are all equal to $1$ is denoted $\veo$ and this never refers to
an indicator function.
Similar notations and terminologies might be used in other dimensions and for matrices. The identity matrix is denoted $\vect{I}$.

Similarly, a function can be nonnegative, nonnegative nonzero, positive. For clarity, a positive function is a function with only positive values.

To avoid confusion between operations in the state space $\R^N$ and operations in the spatial domain $\R^n$, 
Latin indexes $i,j,k$ are assigned to vectors and matrices of size $N$ whereas Greek indexes $\alpha,\beta,\gamma$ 
are assigned to vectors and matrices of size $n$. 
We use mostly subscripts to avoid confusion with algebraic powers, but when both Latin and Greek indexes are involved, we 
move the Latin ones to a superscript position, \textit{e.g.} $A^i_{\alpha,\beta}(t,x)$.
We denote scalar products in $\R^N$ with the transpose operator, $\veu^\upT\vev=\sum_{i=1}^N u_i v_i$,
and scalar products in $\R^n$ with a dot, $x\cdot y =\sum_{\alpha=1}^n x_\alpha y_\alpha$. 

For any vector $\veu\in\R^N$, $\diag(\veu)$, $\diag(u_i)_{i\in[N]}$ or $\diag(u_i)$ for short refer to the diagonal matrix in $\R^{N\times N}$
whose $i$-th diagonal entry is $u_i$. These notations can also be used if $\veu$ is a function valued in $\R^N$.

Finite dimensional Euclidean norms are denoted $|\cdot |$ whereas the notation $\|\cdot \|$ is reserved for norms in functional spaces.

The notation $\circ$ is reserved in the paper for the Hadamard product (component-wise product of vectors or matrices)
and never refers to the composition of functions.
For any vector $\vev\in\R^N$ and $p\in\R$, $\vev^{\circ p}$ denotes the vector $(v_i^p)_{i\in[N]}$.

\subsection{Main result}

Before stating the main result, we need to introduce the periodic principal eigenvalue of $\cbQ$:
\begin{equation}
    \lambda_{1,\upp}=\inf\left\{ \lambda\in\R\ |\ \exists \veu\in\mathcal{W}^{1,\infty}\cap\caC^{1,2}_{t-\upp}(\R\times\R^n,(\vez,\vei))\ \cbQ\veu\leq\lambda\veu \right\},
\end{equation}
and to recall results from the work of the first author on persistence, extinction and spreading properties of
the solutions of the Cauchy problem associated with \eqref{sys:KPP} \cite{Girardin_2023}.

By virtue of \cite[Theorem 1.1]{Girardin_2023}, if $\lambda_{1,\upp}\geq 0$, all solutions of
the Cauchy problem go extinct uniformly in space and, in particular, there exists no pulsating
traveling wave solution.
On the contrary, by \cite[Theorem 1.3, Theorem 1.4]{Girardin_2023}, if $\lambda_{1,\upp}<0$, 
solutions of the Cauchy problem whose initial values have a sufficiently slow exponential decay 
persist locally uniformly and spread in space, in the following sense:
\begin{equation}
    \liminf_{t\to+\infty}\min_{i\in[N]}\inf_{|x|\leq R}u_i(t,x)>0\quad\text{for all }R>0.
    \label{eq:Cauchy_persistence}
\end{equation}
These solutions are natural candidates for long-time convergence (in some sense) to pulsating 
traveling waves.

To state our main result, we also need to define a modified operator parameterized by a
pair $(\mu,e)\in[0,+\infty)\times\Sn$:
\begin{equation}
    \cbQ_{\mu e}=\cbQ - \diag\left( 2\mu A_i e\cdot\nabla \right) - \diag\left(\mu^2 e\cdot A_i e+\mu\nabla\cdot(A_i e)-\mu q_i\cdot e \right)
    \label{eq:Qmue}
\end{equation}
and its periodic principal eigenvalue $\lambda_{1,\mu e}=\lambda_{1,\upp}(\cbQ_{\mu e})$.
Defining $\upe_{\mu e}:x\mapsto\upe^{\mu e\cdot x}$, it can be verified that the periodic 
principal eigenfunction $\veu_{\mu e}$ of $\cbQ_{\mu e}$, normalized by 
$\max_{i\in[N]}\max_{(t,x)\in\R\times\R^n}u_{\mu e,i}(t,x)=1$, satisfies:
\begin{equation}
    \cbQ(\upe_{\mu e}\veu_{\mu e})=\lambda_{1,\mu e}\upe_{\mu e}\veu_{\mu e}.
    \label{eq:ppe_Qmue}
\end{equation}
We also define
\begin{equation}
    c_e^\star=\min_{\mu>0}\frac{-\lambda_{1,\mu e}}{\mu}.
    \label{eq:minimal_wave_speed}
\end{equation}
The fact that the function $\mu\mapsto\frac{-\lambda_{1,\mu e}}{\mu}$ admits a unique global minimum is 
standard in KPP-type problems and will be recalled below in Lemma \ref{lem:minimal_c}. Note that this minimum
could be negative. 

Our main results are the following ones.

\begin{thm}\label{thm:nonexistence}
    Assume $\lambda_{1,\upp}<0$. 

    Then, for any $e\in\Sn$ and $c\in(-\infty,c_e^\star)$, there exists no almost planar generalized transition wave solution in
    direction $-e$ with global mean speed $c$.
\end{thm}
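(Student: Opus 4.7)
The plan is to argue by contradiction via a spreading-versus-downstream conflict. Assume that $\veu$ is an almost planar generalized transition wave in direction $-e$ with global mean speed $c<c_e^\star$. The upstream condition \eqref{eq:definition_GTW_limit_positive_infinity} immediately yields constants $\delta>0$ and $M\in\R$ with $\veu^{\tl}(t,x_\perp+ze)\geq\delta\veo$ for every $z\geq M$ and every $(t,x_\perp)\in\R\times e^\perp$. Undoing the moving frame, this means that for every $t_0\in\R$ the section $\veu(t_0,\cdot)$ dominates $\delta\veo$ on the half-space $H_{t_0}:=\{y\in\R^n:y\cdot e\geq M-ct_0\}$.

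Next I would exploit the spreading properties of the Cauchy problem inherited from \cite{Girardin_2023,Girardin_Mazari_2022}. Since $\lambda_{1,\upp}<0$, the solution launched at time $t_0$ from $\veu(t_0,\cdot)$, which coincides with $\veu$ on $[t_0,+\infty)\times\R^n$, spreads in the direction $-e$ at speed $c_e^\star$: for every $c'<c_e^\star$ there exist $\delta'>0$ and $T>0$, independent of $t_0$, such that
\[
    \veu(t,y)\geq\delta'\veo\quad\text{whenever}\quad t\geq t_0+T\text{ and }y\cdot e\geq M-ct_0-c'(t-t_0).
\]
This statement is obtained by localizing around an arbitrary point of $H_{t_0}$ and combining the compactly supported spreading estimate with parabolic Harnack inequalities for the cooperative linear operator $\cbQ$.

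To conclude, I would fix $c'\in(c,c_e^\star)$, which is nonempty thanks to $c<c_e^\star$, and consider a sequence $(t_n,y_n)$ with $t_n\to+\infty$ and $y_n\cdot e=M-ct_0-c'(t_n-t_0)$. The previous step forces $\veu(t_n,y_n)\geq\delta'\veo$ for $n$ large, whereas
\[
    y_n\cdot e+ct_n=M+(c-c')(t_n-t_0)\to-\infty\text{ as }n\to\infty,
\]
since $c-c'<0$. The downstream condition \eqref{eq:definition_GTW_limit_negative_infinity} then imposes $\veu(t_n,y_n)\to\vez$, in contradiction with the spreading lower bound. I expect the main obstacle to lie in the spreading step: for non-cooperative \eqref{sys:KPP} it cannot follow from a nonlinear comparison principle, and must instead be obtained by linearization at $\vez$ together with the cooperative-parabolic principal eigenvalue analysis of $\cbQ$ developed in \cite{Girardin_Mazari_2022,Girardin_2023}. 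Once this spreading is available, the geometric computation above goes through uniformly in $t_0$ and regardless of the sign of $c_e^\star$.
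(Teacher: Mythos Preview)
Your overall architecture---upstream gives a half-space lower bound, spreading at speed $c_e^\star$ pushes that lower bound forward, and the downstream condition is violated along a ray with speed strictly between $c$ and $c_e^\star$---matches the paper's argument exactly. The difficulty you flag is the right one, but the mechanism you propose for the spreading step does not work. Linearization at $\vez$ produces the cooperative problem $\cbQ\vev=\vez$, whose solutions are \emph{super}-solutions of \eqref{sys:KPP} (because $-(\veB\veu)\circ\veu\leq\vez$), so this yields upper bounds on $\veu$, not the lower bound you need behind the front. Likewise, the Harnack inequality for $\cbQ$ controls oscillation of $\veu$ locally but carries no information about the front speed; and the compactly supported spreading results in \cite{Girardin_2023} cannot be transferred to $\veu$ by comparison, since the full system is non-cooperative.

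The paper fills this gap with a different device: using Gaussian estimates and the global boundedness of entire solutions (specifically \cite[Corollary~2.7]{Girardin_2023}), one obtains a pointwise inequality $\veB\veu\circ\veu\leq D\,\veu^{\circ(1+p)}$ for some $D>0$ and $p\in(0,1)$, valid along the entire solution $\veu$. This replaces the competitive coupling by a \emph{decoupled} absorption term, so that $\veu$ becomes a super-solution of the genuinely cooperative system $\cbQ\vev+D\vev^{\circ(1+p)}=\vez$. One then places below $\veu(0,\cdot)$ a Heaviside-type datum $\varepsilon H(x\cdot e)\veo$ and invokes the front-like spreading theorem for cooperative periodic systems \cite[Theorem~2.1]{Du_Li_Shen_2022}, which gives a uniform lower bound at any speed below $c_e^\star$. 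The contradiction with the downstream limit then follows exactly as in your final paragraph. In short, the missing ingredient is not linearization but the nonlinear one-sided comparison that turns \eqref{sys:KPP} into a cooperative sub-system.
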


\begin{thm}\label{thm:supercritical_existence}
    Assume $\lambda_{1,\upp}<0$. 

    Then, for any $e\in\Sn\cap\Q^n$ and $c\in(c_e^\star,+\infty)\cap\Q$, there exists a \revision{planar pulsating traveling wave 
    solution in direction $-e$ with speed $c$ with time and transverse space periodicity}.
\end{thm}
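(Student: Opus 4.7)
My plan is to work in the moving frame of \eqref{sys:KPP_moving_frame} and to construct the weak pulsating wave as a locally uniform limit of solutions on truncated cylinders, adapting the topological fixed-point approach of \cite{Girardin_2016_2} (developed for non-cooperative KPP systems with space-time homogeneous coefficients) and borrowing the pulsating-wave philosophy of \cite{Nadin_2009,Nolen_Rudd_Xin}. Since $e\in\Q^n$ and $c\in\Q$, the coefficients of \eqref{sys:KPP_moving_frame} are $T_{e,c}$-periodic in $t$ and $L_{e,\alpha}$-periodic in each new orthogonal coordinate $x'_\alpha$, $\alpha\in[n]$; the object sought is then an entire solution of \eqref{sys:KPP_moving_frame} that is periodic in $(t, x'_1, \ldots, x'_{n-1})$ but non-periodic in the propagation variable $z := x\cdot e$. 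Accordingly, I work on $\mathbb{T}_{e,c} \times \R$, where $\mathbb{T}_{e,c}$ is the flat torus built from the periodicity cell in $(t, x'_1, \ldots, x'_{n-1})$.

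The first ingredient is the linear exponential profile. Since $c > c_e^\star$, Lemma \ref{lem:minimal_c} provides two positive roots $0 < \muwed < \muvee$ of $\lambda_{1,\mu e} + \mu c = 0$; by \eqref{eq:ppe_Qmue}, the function
\[
    \vev(t,x) := \upe^{\muwed (x\cdot e + ct)}\veu_{\muwed e}(t,x)
\]
satisfies $\cbQ\vev=\vez$, and its image $\vev^{\tl}$ in the moving frame is periodic in $(t,x'_1,\ldots,x'_{n-1})$ and behaves as $\upe^{\muwed z}$ in $z$. Assumption \ref{ass:KPP} provides a scalar $\kappa > 0$ for which the constant vector $\kappa\veo$ is a super-solution of \eqref{sys:KPP_moving_frame}; hence $\bar\veu := \min(\vev^{\tl}, \kappa\veo)$ is a nontrivial super-solution with the correct exponential decay as $z \to -\infty$.

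On the truncated cylinder $\Omega_a := \mathbb{T}_{e,c}\times(-a,a)$, I will solve \eqref{sys:KPP_moving_frame} with Dirichlet data $\veu = \vez$ at $z=-a$ and $\veu = \kappa\veo$ at $z=a$. A Leray--Schauder degree argument in the vein of \cite{Girardin_2016_2}, homotoping the competition term $-(\veB\veu)\circ\veu$ down to zero, yields an approximate solution $\veu_a$; the cooperative and fully coupled structure of the linearization $\cbQ$ (granted by \ref{ass:cooperative}--\ref{ass:irreducible}) combined with the KPP property of the reaction ensures the a priori trapping $\vez \leq \veu_a \leq \bar\veu$ throughout the homotopy, so that the degree is well-defined and nonzero. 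An amplitude normalization on a well-chosen cross-section $\{z = z_a\}$, followed by a shift making $z_a = 0$, prevents degeneracy; parabolic $\caC^{1+\delta/2,2+\delta}_\upl$ estimates and a diagonal extraction then produce an entire limit $\veu_\infty$ trapped between $\vez$ and $\bar\veu$, non-trivial, inheriting the periodicities in $t$ and $x'_\alpha$ ($\alpha\in[n-1]$), and satisfying \eqref{eq:definition_GTW_limit_negative_infinity} by the exponential decay of $\bar\veu$.

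The hardest step will be the verification of the upstream limit \eqref{eq:definition_GTW_limit_positive_infinity}. Without a comparison principle, the scalar-case strategy of sliding a sub-solution in the wake is unavailable. The argument will instead proceed by contradiction: a failure of \eqref{eq:definition_GTW_limit_positive_infinity} followed by a shift in the $(t, x'_\alpha, z)$-variables sending a vanishing sequence to the origin would produce, via the cooperative parabolic strong maximum principle applied to $\cbQ$ together with parabolic unique continuation and the irreducibility assumption \ref{ass:irreducible}, another nonnegative entire solution of \eqref{sys:KPP_moving_frame} that vanishes identically; this would contradict a local lower bound of spreading type. The construction and exploitation of this local lower bound is the true bottleneck, crucially relying on $\lambda_{1,\upp} < 0$ and on the persistence and spreading analysis of \cite{Girardin_2023}. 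Once that piece is in place, the assembled $\veu_\infty$ is the sought weak planar pulsating traveling wave.
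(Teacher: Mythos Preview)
Your overall architecture---work in the moving frame, solve on truncated cylinders via a topological fixed point, pass to the limit---matches the paper's. The implementation differs in two places worth noting.

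First, the fixed-point step. The paper does \emph{not} run a Leray--Schauder homotopy on the competition term; instead it freezes a function $\ver\in[\vez,\overline{\veu}]$ inside the competition, solves the \emph{linear cooperative} problem $(\cbR+\diag(\veB'\ver))\veu=\vez$ on $\Sigma_a$ with Dirichlet data $\underline{\veu}\vee\vez$, and applies the Schauder fixed-point theorem to $\ver\mapsto\veu_\ver$. This avoids the difficulty in your sketch that, at the linear endpoint of the homotopy (competition switched off), the constant $\kappa\veo$ is no longer a super-solution: $\cbQ(\kappa\veo)=-\kappa\veL\veo$ need not be nonnegative since $\veL$ is essentially nonnegative. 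Relatedly, rather than relying on an amplitude normalization, the paper builds an explicit sub-solution $\underline{\veu}=\upe^{\muwed z}\veu'_{\muwed}-M\upe^{(\muwed+\gamma)z}\veu'_{\muwed+\gamma}$ from a second exponential rate $\muwed+\gamma\in(\muwed,\muvee)$; this directly guarantees non-triviality of the limit without any shift.

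Second, the upstream positivity. Your sketch (``shifted limit vanishes identically, contradicting a spreading-type lower bound'') is too loose: the shifted limit being zero is exactly what happens downstream, and the Cauchy-problem spreading results of \cite{Girardin_2023} do not give a lower bound at the far-upstream location where you need it. The paper's argument is more intrinsic. Assuming $\veu'\to\vez$ along $z_k\to+\infty$, one normalizes $\veu'(\cdot+\zeta_k)/u'_{i}(\text{pt}_k)$ and passes to a limit $\vev$ solving the \emph{linear} equation $\cbR\vev=\vez$ (the nonlinearity disappears because $\veu'\to\vez$). The Harnack inequality bounds the ratio $\vew=\vev(\cdot,\cdot,z+L_z)/\vev(\cdot,\cdot,z)$, and a sliding argument on $\sup\vew$ and $\inf\vew$ forces the associated exponential rates to lie in $\{\muwed,\muvee\}$, hence $\vew\geq\upe^{\muwed L_z}\veo$. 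Transferring this back to $\veu'$ yields exponential growth of $\veu'$ along $z_k$, contradicting $\veu'\to\vez$. This spectral mechanism is the substantive idea your proposal is missing.
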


\begin{thm}\label{thm:critical_existence}
    Assume $\lambda_{1,\upp}<0$. 

    Then, for any $e\in\Sn\cap\Q^n$ such that $c_e^\star\in\Q$, there exists a \revision{planar pulsating traveling wave solution 
    in direction $-e$ with speed $c_e^\star$ with time and transverse space periodicity}.
\end{thm}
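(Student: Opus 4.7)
My plan is to construct the critical wave as a limit of the supercritical weak pulsating traveling waves provided by Theorem~\ref{thm:supercritical_existence}. Let $T^\star=T_{e,c_e^\star}$ and fix a decreasing sequence $(c_k)_{k\in\N^\star}\subset\Q\cap(c_e^\star,+\infty)$ with $c_k\downarrow c_e^\star$; by Theorem~\ref{thm:supercritical_existence}, for each $k$ there is a weak planar pulsating traveling wave $\veu_k$ at speed $c_k$, whose profile $\veu_k^{\tl}$ is $T_{e,c_k}$-periodic in $t$ and $L_{e,\alpha}$-periodic in each $x'_\alpha$. Fix a small threshold $\eta>0$. Since $e\in\Q^n$, the lattice projection $\{\vect{m}\cdot e:\vect{m}\in\Z^n\}$ is a discrete subgroup $\tfrac{1}{D}\Z$ of $\R$ for some $D\in\N^\star$, and the limits \eqref{eq:definition_GTW_limit_negative_infinity}--\eqref{eq:definition_GTW_limit_positive_infinity} force each $\veu_k^{\tl}$ to have a finite level-$\eta$ front position in the $e$-direction. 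Integer translations being symmetries of \eqref{sys:KPP}, I replace $\veu_k$ by $\veu_k(\cdot,\cdot+\vect{m}_k)$ for a suitable $\vect{m}_k\in\Z^n$, so that the normalised front position lies in $[0,1/D)$ uniformly in $k$.

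Uniform $L^\infty$ bounds follow from \ref{ass:KPP} by comparison with constant supersolutions; \ref{ass:smooth_periodic} and parabolic Schauder estimates then yield uniform parabolic H\"{o}lder bounds, hence relative compactness in $\caC^{1,2}_\upl$. Up to extraction, $\veu_k\to\veu_\infty$ with $\veu_\infty$ a nonnegative classical entire solution of \eqref{sys:KPP}. The downstream condition \eqref{eq:definition_GTW_limit_negative_infinity} passes to the limit via the uniform exponential supersolution built from $\upe_{\mu_k^- e}\veu_{\mu_k^- e}$, where $\mu_k^-$ is the smaller positive root of $\mu\mapsto\lambda_{1,\mu e}+\mu c_k$; the two positive roots merge at $\mu^\star$ precisely when $c_k=c_e^\star$, so $\mu_k^-\uparrow\mu^\star$ and the bound is inherited in the limit. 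The upstream condition \eqref{eq:definition_GTW_limit_positive_infinity} is more delicate: the front normalisation guarantees $\veu_\infty^{\tl}$ is nontrivial, and combining a F\"{o}ldes--Pol\'{a}\v{c}ik Harnack estimate on the cooperative linearisation $\cbQ$ with a persistence argument in the spirit of \cite[Theorem~1.3]{Girardin_2023} should propagate nontriviality at the front into a uniform positive lower bound as $\xi\to+\infty$.

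The main obstacle I anticipate is verifying the temporal periodicity required by Definition~\ref{defi:WPTW}. The spatial periods $L_{e,\alpha}$ are independent of $c$, so $L_{e,\alpha}$-periodicity of $\veu_\infty^{\tl}$ in $x'_\alpha$ is inherited directly from $\veu_k^{\tl}$; but the temporal periods $T_{e,c_k}$ typically diverge with the denominator of $c_k-c_e^\star$ and need not divide $T^\star$, so $T^\star$-periodicity in the limit cannot be read off immediately. The plan is to exploit the integer lattice shift $\sigma^\star:(t,x)\mapsto(t+T^\star,x-T^\star c_e^\star e)$, which preserves \eqref{sys:KPP} since $T^\star c_e^\star e\in\Z^n$ by definition of $T^\star$, and to prove $\veu_\infty\circ\sigma^\star=\veu_\infty$. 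Both $\veu_k$ and $\veu_k\circ\sigma^\star$ are relatively compact sequences of solutions of \eqref{sys:KPP}, converging along the extracted subsequence to $\veu_\infty$ and $\veu_\infty\circ\sigma^\star$ respectively; what remains is to force these two limits to coincide. This should follow from a uniqueness feature of the sub-/super-solution construction behind Theorem~\ref{thm:supercritical_existence}, combined with a choice of normalisation in the first step compatible with $\sigma^\star$ in the limit; carrying this out rigorously, and in particular reconciling the drifting periods $T_{e,c_k}$ with the target period $T^\star$, will constitute the main technical work of the proof.
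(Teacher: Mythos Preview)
Your approach---passing to the limit $c_k\downarrow c_e^\star$ along supercritical waves---is the classical one for scalar KPP equations, but here it runs into a genuine obstruction that you have correctly flagged and not resolved. The temporal period $T_{e,c_k}$ is governed by the denominator of $c_k$; there is no reason a rational sequence $c_k\to c_e^\star$ can be chosen with $T_{e,c_k}$ bounded, let alone dividing $T^\star$. Your proposed remedy, forcing $\veu_\infty\circ\sigma^\star=\veu_\infty$ via ``a uniqueness feature of the sub-/super-solution construction'', is not available: the system \eqref{sys:KPP} is non-cooperative, the construction behind Theorem~\ref{thm:supercritical_existence} is a Schauder fixed point (giving existence, not uniqueness), and no sliding or comparison argument can identify two distinct limits of solutions. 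Without this, the limit $\veu_\infty$ is at best an almost planar generalized transition wave, not a weak pulsating traveling wave in the sense of Definition~\ref{defi:WPTW}.

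The paper avoids this entirely by \emph{not} taking a limit in $c$. Instead it reruns the truncated-cylinder construction of Section~4 directly at $c=c_e^\star$, replacing the supercritical super- and sub-solutions (which rely on the two distinct roots $\muwed<\muvee$) by new ones built from the $\mu$-derivative $\dot{\vect{\Theta}}(\mu^\star)$ of the eigenfunction map. Differentiating \eqref{eq:principal_eigenfunction_new_coordinates} at $\mu^\star$ and using $c^\star=-(\partial_\mu\lambda_{1,\mu})_{|\mu=\mu^\star}$ yields $\cbR\dot{\vect{\Theta}}(\mu^\star)=\vez$, and the resulting $z\upe^{\mu^\star z}$-type profile furnishes the critical super-solution; the sub-solution is a perturbation by $\vect{\Theta}(\mu^\star)$ and $\vect{\Theta}(\mu^\star+\gamma)$. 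Because the construction is carried out at speed $c_e^\star$ from the outset, the periods $T^\star$ and $L_{e,\alpha}$ are built into the truncated cylinder and survive automatically; the Schauder fixed point, limit $a\to+\infty$, and Harnack-based upstream positivity then go through as in the supercritical case with only minor adaptations.
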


In the special case where coefficients are spatially homogeneous, the same method of proof of existence works with no rationality assumption
and gives the existence of almost planar generalized transition waves with profiles that are 
$1$-periodic with respect to $t$ and homogeneous with respect to the $n-1$ coordinates $x_\alpha'$ of $e^\perp$. 
Usually, when coefficients are independent of space and periodic in time, such solutions form the definition of 
\revision{planar pulsating traveling waves with time periodicity} \cite{FrejacquesPhD,Alikakos_Bates_Chen_1999}.
This corresponds to a particular case of Definition \ref{defi:PTW_whole_space}.

\begin{cor}\label{cor:space_homogeneous}
    Assume $\lambda_{1,\upp}<0$. 

    Assume moreover that the coefficients of \eqref{sys:KPP} are homogeneous with respect to space.

    Then, for any $e\in\Sn$ and $c\in\R$, there exists a \revision{planar pulsating traveling wave solution
    in direction $-e$ with speed $c$ with time periodicity} if and only if $c\geq c_e^\star$.
\end{cor}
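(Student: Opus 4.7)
The ``only if'' direction is a direct consequence of Theorem \ref{thm:nonexistence}: a strong planar pulsating traveling wave in the sense of Definition \ref{defi:SPTW} is in particular an almost planar generalized transition wave with the same direction and global mean speed, so its speed must satisfy $c \geq c_e^\star$.

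For the converse, my plan is to revisit the proofs of Theorems \ref{thm:supercritical_existence} and \ref{thm:critical_existence} and to observe that, when the coefficients of \eqref{sys:KPP} depend only on $t$, the rationality assumptions on $(e,c)$ become unnecessary and the constructed wave is automatically invariant under translations in $e^\perp$. The key observation is that, in the spatially homogeneous setting, the moving-frame coefficients of \eqref{sys:KPP_moving_frame} still depend only on $t$ and are $1$-periodic in $t$, for every $e \in \Sn$ and every $c \in \R$; the rationality requirement served in the inhomogeneous case only to secure a nontrivial space-time periodicity in the new variables $(t, x'_1, \dots, x'_n)$, which is replaced here by a constant-in-space, $1$-time-periodic structure. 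Accordingly, the periodic principal eigenfunction $\veu_{\mu e}$ of $\cbQ_{\mu e}$ depends only on $t$, and the exponential function $\upe_{\mu e}\veu_{\mu e}$ serving as the building block for supersolutions becomes transversally invariant after the change of frame.

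My plan is then to carry out the constructions of Sections 3--5 within the class of transversally invariant, time $1$-periodic functions in the moving frame, where \eqref{sys:KPP_moving_frame} reduces to a one-dimensional reaction--diffusion system in the variable $\zeta = x \cdot e$ with time-periodic coefficients. At a supercritical speed $c > c_e^\star$, the monotone iteration or sliding argument of Theorem \ref{thm:supercritical_existence} produces, within this class, an entire solution $\veu^{\tl}$ trapped between appropriate sub- and supersolutions; at the critical speed, one passes to the limit $c \downarrow c_e^\star$ using the uniform parabolic estimates and \FPH inequality as in Theorem \ref{thm:critical_existence}. The resulting $\veu^{\tl}$ depends on $x$ only through $\zeta = x \cdot e$, so $\veu(t,x) = \vep(x \cdot e + ct, t)$ is a strong planar pulsating traveling wave in the sense of Definition \ref{defi:SPTW}, with the third (spatial) variable of the profile being redundant.

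The main point of vigilance is to verify that each ingredient of the original proofs---sub- and supersolutions on bounded domains, the iteration or sliding scheme, diagonal compactness, and the limiting procedure at $c = c_e^\star$---preserves invariance under translations in $e^\perp$ as well as time $1$-periodicity. This should follow essentially for free once the entire construction is set up in the transversally invariant functional setting from the start, since spatial homogeneity is a genuine simplification of the framework rather than an added difficulty.
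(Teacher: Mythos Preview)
Your overall strategy is correct and matches the paper: the paper's proof of the corollary is essentially the remark preceding it, namely that when the coefficients are spatially homogeneous the same method as in Theorems \ref{thm:supercritical_existence} and \ref{thm:critical_existence} goes through without any rationality assumption, and the resulting profile is $1$-periodic in $t$ and constant in the $n-1$ transverse coordinates of $e^\perp$, hence a strong pulsating wave in the sense of Definition \ref{defi:SPTW}. Your observation that the moving-frame coefficients remain purely $t$-dependent and $1$-periodic, and that the principal eigenfunctions $\veu_{\mu e}$ depend only on $t$, is exactly the right reason.

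However, your description of what those underlying proofs actually do is inaccurate in two places, and this matters if you intend to execute the plan. First, the supercritical construction in Theorem \ref{thm:supercritical_existence} does not use a monotone iteration or a sliding argument; the system is non-cooperative, so no comparison principle is available for \eqref{sys:KPP}. Instead the paper freezes $\ver$ in the quadratic term, solves the resulting \emph{linear} cooperative problem $(\cbR+\diag(\veB'\ver))\veu=\vez$ on a truncated cylinder between the explicit barriers $\underline{\veu}\vee\vez$ and $\overline{\veu}$, and applies the Schauder fixed-point theorem to $\ver\mapsto\veu$. Second, and more significantly, the critical case in Theorem \ref{thm:critical_existence} is \emph{not} obtained by passing to the limit $c\downarrow c_e^\star$. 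The paper instead builds new barriers directly at $c=c_e^\star$ from the $\mu$-derivative $\dot{\vect{\Theta}}(\mu^\star)=\partial_\mu(\upe^{\mu z}\veu'_\mu)|_{\mu=\mu^\star}$ and re-runs the same Schauder argument. A limiting procedure in this non-cooperative setting would require an additional normalization to rule out degeneration to $\vez$ and a separate argument for the upstream bound \eqref{eq:definition_GTW_limit_positive_infinity}; none of this is in the paper, so your appeal ``as in Theorem \ref{thm:critical_existence}'' does not point to an existing argument.
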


\subsection{Comments}

\revision{It might be tempting to use the density of $\Q$ in $\R$ together with Theorems 
\ref{thm:supercritical_existence} and \ref{thm:critical_existence} to construct an entire solution
for any direction $-e$ and any speed $c$. However, the sequences of space periods and time periods
associated with the sequence of \revision{planar pulsating traveling waves with time and transverse space periodicity}
blow up when passing to the limit. Thus such a limiting procedure constructs, at best, almost planar
generalized transition waves.} Nonetheless, the following conjecture is natural.

\begin{conj}\label{conj:existence_of_SPPTW}
    For all $e\in\Sn$ and $c\geq c_e^\star$, there exists a \revision{planar pulsating traveling wave solution in direction $-e$
    with speed $c$ with time and whole space periodicity}.
\end{conj}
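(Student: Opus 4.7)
\textbf{Proof proposal for Conjecture~\ref{conj:existence_of_SPPTW}.}
My plan is to bypass the rotated-coordinate construction underlying Theorems~\ref{thm:supercritical_existence}--\ref{thm:critical_existence} and work directly on the cylinder $\R_z\times\clOmper$. Substituting the ansatz $\veu(t,x)=\vep(x\cdot e+ct,t,x)$ into \eqref{sys:KPP} produces a uniformly elliptic--parabolic system for $\vep$, posed with periodic conditions in $(t,x)$ and free in $z$, whose linearization at $\vep\equiv\vez$ is still cooperative and fully coupled by \ref{ass:cooperative}--\ref{ass:irreducible} and whose reaction retains the KPP form $\veL\vep-(\veB\vep)\circ\vep$. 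The target asymptotics are $\vep(-\infty,\cdot,\cdot)=\vez$ together with a positive uniform lower bound as $z\to+\infty$; neither is sensitive to the arithmetic of $(e,c)$, so this formulation is naturally compatible with all $e\in\Sn$ and all $c\in\R$.

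The core construction is on a truncation $[-R,R]_z\times\clOmper$ with Dirichlet data $\vep=\vez$ at $z=-R$ and $\vep=\vep_+$ at $z=+R$, where $\vep_+$ is a positive $(t,x)$-periodic entire solution of \eqref{sys:KPP} provided by $\lambda_{1,\upp}<0$ and the analysis of \cite{Girardin_2023}. Since \ref{ass:KPP} makes the nonlinearity non-cooperative, I would follow the regularization scheme of \cite{Girardin_2016_2}: replace $-(\veB\vep)\circ\vep$ by a cooperative cut-off depending on a parameter $\eps>0$, solve the resulting system by Leray--Schauder degree on a convex set pinched between the KPP supersolution $\min\bigl(\vep_+,\,A\,\upe^{\mu z}\veu_{\mu e}\bigr)$ and a two-exponent subsolution $\bigl[A\,\upe^{\mu z}-B\,\upe^{\nu z}\bigr]\veu_{\mu e}$ with $\mu$ optimal in \eqref{eq:minimal_wave_speed} and $\nu>\mu$ a companion produced by a standard concavity argument on $\mu\mapsto-\lambda_{1,\mu e}$. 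Passing $\eps\downarrow 0$ and then $R\to+\infty$, the Schauder regularity permitted by \ref{ass:smooth_periodic} extracts a classical limit profile on the full cylinder; the two-sided barriers survive the limit and encode both \eqref{eq:definition_GTW_limit_negative_infinity} and \eqref{eq:definition_GTW_limit_positive_infinity}. The critical speed $c=c_e^\star$ is then recovered by a further compactness argument on a sequence $c_k\downarrow c_e^\star$ of supercritical speeds, as for Theorem~\ref{thm:critical_existence}.

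The main obstacle---the precise reason the authors leave this open---is the absence of a monotone structure in $z$ when $N\geq 2$, which invalidates both the Nadin/Nolen--Rudd--Xin uniqueness argument for the truncated profile equation and the standard trapping of the solution between the two barriers outside the left half-cylinder. Concretely, the Leray--Schauder output need not satisfy \eqref{eq:definition_GTW_limit_negative_infinity}: as $R\to+\infty$ the profile could escape the subsolution at finite $z$, or the limit could collapse to $\vez$ or to $\vep_+$. Establishing a uniform lower bound on the mass of $\vep_{R,\eps}$ on a fixed window around $z=0$, together with a sharp control of the minimal exponential decay rate at $z=-\infty$ independent of $R$, is the genuinely novel analytic input that would be required; all the other ingredients are in principle already present in the present paper and in \cite{Girardin_2016_2,Girardin_2023}.
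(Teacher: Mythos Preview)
This statement is a \emph{conjecture}, not a theorem: the paper explicitly states, immediately after formulating it, that ``at the highest degree of generality, its proof remains entirely open'' and that ``despite substantial efforts, we did not manage to circumvent this obstacle''. There is therefore no proof in the paper to compare your proposal against, and your task was in effect to resolve an open problem.

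Your proposal is not a proof either, as you candidly admit in your final paragraph: the ``genuinely novel analytic input that would be required'' is precisely the missing step, and you offer no mechanism to supply it. A proof sketch that ends by identifying the gap is a research plan, not a proof.

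Beyond this, two technical points deserve correction. First, the claim that the profile equation on $\R_z\times\clOmper$ is ``uniformly elliptic--parabolic'' is false: substituting $\veu(t,x)=\vep(x\cdot e+ct,t,x)$ yields, in the $(z,x)$ variables, a second-order operator whose principal symbol is governed by the $(n+1)\times(n+1)$ matrix $\bigl(\begin{smallmatrix} e\cdot A_i e & (A_i e)^\upT \\ A_i e & A_i \end{smallmatrix}\bigr)$, which is only positive \emph{semi}-definite (the vector $(1,-e)$ lies in its kernel). This degeneracy is exactly what forces the delicate regularization arguments in \cite{Nadin_2009,Nolen_Rudd_Xin}, and those arguments rely in turn on the $z$-monotonicity that is unavailable here for $N\geq 2$ --- the paper stresses this point in Section~\ref{sec:PPTW}. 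Second, your description of the critical case as ``a further compactness argument on a sequence $c_k\downarrow c_e^\star$'' does not match the paper's actual proof of Theorem~\ref{thm:critical_existence}, which proceeds by a direct construction with modified barriers built from $\dot{\vect{\Theta}}(\mu^\star)$ rather than by passing to the limit in supercritical waves.
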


Conjecture \ref{conj:existence_of_SPPTW} is known to be true in particular cases mentioned above 
(cf. Section \ref{sec:PPTW} and Corollary \ref{cor:space_homogeneous}).
Nevertheless, at the highest degree of generality, its proof remains entirely open.
It is indeed not clear that the \revision{planar pulsating traveling waves with time and transverse space periodicity}
provided by Theorems \ref{thm:supercritical_existence} and \ref{thm:critical_existence} are actually 
\revision{planar pulsating traveling waves with time and whole space periodicity}.

In \cite{Girardin_2023}, where the Cauchy problem associated with \eqref{sys:KPP} was studied, 
the construction of pulsating traveling waves was mentioned as an open problem. 
Our results bring partial answers. The proof of Conjecture \ref{conj:existence_of_SPPTW} would
complete the answer for planar waves.

\revision{The formula \eqref{eq:minimal_wave_speed} is related with the so-called Freidlin--G\"{a}rtner formula 
\cite{Gartner_Freidlin,Freidlin_1985}, which was originally introduced as the asymptotic spreading speed of solutions of the 
Cauchy problem for the scalar equation $N=1$ with space periodic coefficients and with compactly supported initial conditions.
In one-dimensional space $n=1$, this asymptotic spreading speed and the minimal planar wave speed $c_e^\star$ coincide, but
in higher dimensions they might differ in a way that is precisely quantified by the Freidlin--G\"{a}rtner formula. The
Freidlin--G\"{a}rtner formula was confirmed to hold for \eqref{sys:KPP} in \cite{Girardin_2023}.}

The fact that $c_e^\star$ might be of any sign is in particular due to the presence
of arbitrary advection terms $(q_i)_{i\in[N]}$. For instance, recalling that the one-dimensional 
scalar KPP equation $\partial_t u = \partial_{xx}u + u(1-u)$ admits $2$ as bidirectional minimal 
wave speed, it follows that the shifted equation 
$\partial_t u = \partial_{xx}u+q\partial_x u + u(1-u)$, with any $q\in\R$, admits $2\pm q$ as 
leftward and rightward minimal wave speeds.

Circumventing the absence of existence--comparison principle for the semilinear non-cooperative
system will be achieved with a fixed point argument similar to
the one in \cite{Girardin_2016_2}, itself inspired by \cite{Berestycki_Nadin_Perthame_Ryzhik}. 
In this sense, this technical circumvention is not a novelty of this paper, even though it is the first
time to the best of our knowledge it is applied in a non-cooperative space-time periodic framework.

It follows from \cite[Corollary 2.2]{Girardin_2023} that all almost planar generalized transition waves
are globally bounded from above by the same constant vector $K\veo$, with a constant $K>0$ 
that only depends on bounds on $\veL$ and $\veB$. From this uniform boundedness, the uniform positivity
in the upstream asymptotic in Definition \ref{defi:GTW}
will follow from an argument involving the Harnack inequality, as in \cite{Girardin_2016_2}.

Apart from its uniform positivity and boundedness, the behavior of the profiles $\veu^{\tl}(t,x+ze)$ as 
$z\to+\infty$ is unknown when $N>1$. We recall that, contrarily to the scalar case, there might be 
multiple uniformly positive entire solutions of \eqref{sys:KPP} susceptible of being the 
limit as $z\to+\infty$. In fact it is not even clear that the profiles converge as 
$z\to+\infty$. This observation is neither new nor specific to the space-time periodic setting
\cite{Girardin_2016_2,Girardin_2017}.  

The behavior of the profiles $\veu^{\tl}(t,x+ze)$ as $z\to-\infty$ is less elusive.
As in the scalar case \cite{Nadin_2009} or in the case of systems with homogeneous coefficients
\cite{Girardin_2016_2}, exponential super- and sub-estimates follow from the construction. However
these apply \textit{a priori} only to the constructed profiles, not to all existing profiles.
The exponential approximation of all profiles requires much more work, \textit{cf.} \cite{Girardin_2017}
in the case of homogeneous coefficients or \cite{Hamel_2008} in the case of scalar equations
with space periodic coefficients. This is outside the scope of this paper.

We emphasize that the minimal wave speed in direction $-e$ is denoted $c^\star_e$ and
not $c^\star_{-e}$. This change in notation compared to the previous work of the first author \cite{Girardin_2023} 
is done for notational convenience and for consistency with the work of the second author on the scalar case
\cite{Nadin_2009}. It should not be a cause for confusion.

\revision{Finally, we mention the article \cite{Huang_Wu_Zhao_2025}, where a method similar to ours
    is used to construct what is called here almost planar generalized transition waves, and there
    transition semi-waves, for a related class of one-dimensional two-species non-cooperative systems 
    with space-time periodic coefficients, possible time delay, and no diffusion for the second species.
    It is a rare example of work on the spreading properties of solutions to non-cooperative systems with
    space-time periodic coefficients. Their article motivates a possible sequel to our article, on systems 
with degenerate diffusion and time delays.}

\section{Preliminaries}
In this section, we establish or recall basic results that will be used repeatedly in the main proofs.

\subsection{Harnack inequality for linear cooperative systems}

For self-containment and ease of reading, we recall here the Harnack inequality 
that was proved in \cite{Girardin_Mazari_2022}. It is a 
refinement of \FPH inequality \cite[Theorem 3.9]{Foldes_Polacik_2009}
for parabolic cooperative systems and Arapostathis--Ghosh--Marcus's Harnack inequality 
\cite[Theorem 2.2]{Araposthathis_} for elliptic cooperative systems.

We denote by $\sigma>0$ the smallest positive entry of $\overline{\veL}$ (\textit{cf.} \ref{ass:irreducible})
and by $K\geq 1$ the smallest positive number such that
\[
    K^{-1}\leq\min_{i\in[N]}\min_{y\in\Sn}\min_{(t,x)\in\clOmper}\left(y\cdot A_i(t,x)y\right),
\]
\[
    \max_{i\in[N]}\max_{y\in\Sn}\max_{(t,x)\in\clOmper}\left(y\cdot A_i(t,x)y\right)\leq K,
\]
\[
    \max_{i\in[N]}\max_{\alpha\in[n]}\max_{(t,x)\in\clOmper}|q_{i,\alpha}(t,x)|\leq K,
\]
\[
    \max_{i,j\in[N]}\sup_{(t,x)\in\clOmper}|l_{i,j}(t,x)|\leq K.
\]
The existence of $K$ is given by \ref{ass:ellipticity} and \ref{ass:smooth_periodic}.

\begin{prop}[Proposition 2.4 in \cite{Girardin_Mazari_2022}]\label{prop:harnack_inequality}
Let $\theta\geq\max\left(T,L_1,\dots,L_n\right)$ and $\vect{f}\in\mathcal{L}^\infty\cap\caC^{\delta/2,\delta}(\R\times\R^n,\R^N)$, 
where $\delta\in(0,1)$ is as in \ref{ass:smooth_periodic}. Let $F>0$ such that  
\[
    \max_{i\in[N]}\sup_{(t,x)\in\R\times\R^n}|f_i(t,x)|\leq F.
\]

There exists a constant $\overline{\kappa}_{\theta,F}>0$, determined only by $n$, $N$, $\sigma$, $K$ and the parameters
$\theta$ and $F$ such that, if $\veu\in\caC([-2\theta,6\theta]\times[-\frac{3\theta}{2},\frac{3\theta}{2}]^n,[\vez,\vei))$ 
is a solution of $\cbQ\veu=\diag(\vect{f})\veu$, then
\[
    \min_{i\in[N]}\min_{(t,x)\in[5\theta,6\theta]\times[-\frac{\theta}{2},\frac{\theta}{2}]^n}u_i(t,x)
    \geq \overline{\kappa}_{\theta,F}\max_{i\in[N]}\max_{(t,x)\in[0,2\theta]\times[-\frac{\theta}{2},\frac{\theta}{2}]^n}u_i(t,x).
\]
\end{prop}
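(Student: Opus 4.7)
The plan is to recast the vector Harnack inequality as an iterated application of the scalar parabolic Harnack inequality (Krylov--Safonov or Moser), connecting the components along the irreducibility graph of $\overline{\veL}$, in the spirit of F\"{o}ldes--Pol\'{a}\v{c}ik. Throughout, the hypotheses \ref{ass:cooperative} and \ref{ass:irreducible} on $\cbQ=\dcbP-\veL$ are the two structural tools.

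First, I would exploit cooperativity component-wise. Fix $i\in[N]$ and write the $i$-th line of $\cbQ\veu=\diag(\vect{f})\veu$ as
\[
    \caP_i u_i - (l_{i,i}+f_i)u_i = \sum_{j\neq i} l_{i,j} u_j.
\]
Since $\veu\geq\vez$ and, by \ref{ass:cooperative}, $l_{i,j}\geq\underline{l}_{i,j}\geq 0$ for $j\neq i$, the right-hand side is nonnegative, so each $u_i$ is a nonnegative supersolution of a scalar linear parabolic operator with coefficients bounded in terms of $K$ and $F$. The classical scalar parabolic Harnack inequality (applied on cylinders of size of order $\theta$, after a rescaling that turns the uniform ellipticity of $A_i$ into a standard Krylov--Safonov setup) then yields, for each $i\in[N]$, a constant $\kappa_1>0$ depending only on $n$, $K$, $\theta$, $F$ such that
\[
    \inf_{[5\theta,6\theta]\times[-\theta/2,\theta/2]^n} u_i \ \geq\ \kappa_1 \sup_{[0,2\theta]\times[-\theta/2,\theta/2]^n} u_i.
\]
This is a component-by-component bound; it does \emph{not} yet mix the components.

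Second, I would use irreducibility to transfer information between components. By \ref{ass:irreducible}, for each ordered pair $(i,j)\in[N]^2$ there is a chain $j=i_0,i_1,\dots,i_r=i$ (with $r\leq N-1$) and points $(t_\ell^\star,x_\ell^\star)\in\clOmper$ such that $l_{i_\ell,i_{\ell-1}}(t_\ell^\star,x_\ell^\star)\geq\sigma$; by H\"{o}lder continuity and periodicity, there exist a constant $\rho>0$ and, in every time interval of length at least $T$, a parabolic sub-cylinder $C_\ell$ of uniform size on which $l_{i_\ell,i_{\ell-1}}\geq\sigma/2$. On such a sub-cylinder one has the pointwise inequality
\[
    \caP_{i_\ell} u_{i_\ell} \ \geq\ \tfrac{\sigma}{2} u_{i_{\ell-1}} + (l_{i_\ell,i_\ell}+f_{i_\ell})u_{i_\ell},
\]
so constructing a non-negative scalar supersolution driven by a lower bound on $u_{i_{\ell-1}}$ (obtained from the previous step, via a standard barrier or Duhamel computation) gives a lower bound on $u_{i_\ell}$ on a slightly smaller, slightly later sub-cylinder. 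Iterating along the chain, $r\leq N-1$ times, transports the lower bound from $u_j$ to $u_i$, losing only a multiplicative constant depending on $n$, $N$, $\sigma$, $K$, $\theta$, $F$.

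Finally, I would combine the two ingredients: the assumption $\theta\geq\max(T,L_1,\dots,L_n)$ is what guarantees that the localizing sub-cylinders $C_\ell$ actually fit inside the spatial slab $[-3\theta/2,3\theta/2]^n$ and can be arranged in time inside $[-2\theta,6\theta]$. Fix an index $i_\star$ realizing the maximum on $[0,2\theta]\times[-\theta/2,\theta/2]^n$; by the scalar Harnack (step one), this value is transferred to a lower bound for $u_{i_\star}$ at an intermediate time; by the chain argument (step two) applied along a chain $i_\star\to j$, this becomes a lower bound for every component $u_j$ on $[5\theta,6\theta]\times[-\theta/2,\theta/2]^n$. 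Taking the worst constant over all chains of length at most $N-1$ produces $\overline{\kappa}_{\theta,F}$. The main obstacle will be the geometric bookkeeping: organizing a schedule of at most $N-1$ successive space-time sub-cylinders, fitting inside the fixed outer box, such that each scalar Harnack/barrier step has a well-defined past time slab on which the hypothesis of the previous step holds. This is precisely the parabolic--cooperative adaptation carried out in the companion paper, and the quantitative dependence of $\overline{\kappa}_{\theta,F}$ on $n$, $N$, $\sigma$, $K$, $\theta$, $F$ drops out naturally from tracking the constants through the chain.
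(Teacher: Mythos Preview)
The paper does not contain a proof of this proposition: it is stated there as a recalled result, cited verbatim as \cite[Proposition 2.4]{Girardin_Mazari_2022} and described as a refinement of the F\"{o}ldes--Pol\'{a}\v{c}ik parabolic Harnack inequality for cooperative systems. So there is no ``paper's own proof'' to compare with.

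That said, your sketch follows exactly the architecture the paper alludes to (component-wise scalar Harnack via \ref{ass:cooperative}, then transfer along irreducibility chains via \ref{ass:irreducible}, with the condition $\theta\geq\max(T,L_1,\dots,L_n)$ guaranteeing that in each time window one can locate a periodicity cell where the off-diagonal coupling is bounded below by $\sigma/2$). One point worth tightening: in your first step you state the scalar Harnack directly on the full boxes $[0,2\theta]\times[-\theta/2,\theta/2]^n$ and $[5\theta,6\theta]\times[-\theta/2,\theta/2]^n$, but the classical scalar inequality does not immediately give the right time gap to accommodate $N-1$ chain steps in between; in practice one first applies the scalar Harnack on smaller sub-cylinders and then uses a finite covering/chaining in time to bridge the gap, which is where the budget $[-2\theta,6\theta]$ and the constraint $r\leq N-1$ are actually spent. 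Other than this bookkeeping, the outline is correct and matches the F\"{o}ldes--Pol\'{a}\v{c}ik scheme the cited reference is built on.
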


\subsection{Existence of the minimal wave speed}

\begin{lem}[Lemma 3.5 in \cite{Girardin_2023}]\label{lem:minimal_c}
    Let $e\in\Sn$. Assume $\lambda_{1,\upp}<0$.

    The infimum $c_e^\star\in\R$ of the image of the function $\mu\in[0,+\infty)\mapsto\frac{-\lambda_{1,\mu e}}{\mu}$
    is a minimum attained at a unique $\mu^\star_e>0$.

    Moreover, for any $c>c^\star_e$, there exists $\muwed_e,\muvee_e>0$ such that $\muwed_e<\mu^\star_e<\muvee_e$ and
    $\frac{-\lambda_{1,\muwed_e e}}{\muwed_e}=\frac{-\lambda_{1,\muvee_e e}}{\muvee_e}=c$.
\end{lem}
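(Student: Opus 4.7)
The plan is to study the one-variable function $g : \mu \in (0,+\infty) \mapsto -\lambda_{1,\mu e}/\mu$ through two structural properties of $\mu\mapsto\lambda_{1,\mu e}$: continuity on $[0,+\infty)$ and strict concavity. Continuity follows from standard perturbation theory applied to the smooth one-parameter family of irreducible cooperative operators $(\cbQ_{\mu e})_{\mu\geq 0}$ from \eqref{eq:Qmue}, whose coefficients depend polynomially on $\mu$; combined with the hypothesis $\lambda_{1,0 e} = \lambda_{1,\upp} < 0$, this already yields $g(\mu)\to +\infty$ as $\mu\to 0^+$.

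Strict concavity is the heart of the matter and I would obtain it by a Hadamard-power interpolation. Given $\mu_0<\mu_1$, $\tau\in(0,1)$, and $\mu_\tau=(1-\tau)\mu_0+\tau\mu_1$, denote by $\vef_j = \upe_{\mu_j e}\veu_{\mu_j e}$, for $j\in\{0,1\}$, the positive entire solutions supplied by \eqref{eq:ppe_Qmue}, and form the interpolant $\vew = \vef_0^{\circ(1-\tau)}\circ\vef_1^{\circ\tau}$. A component-wise computation relying on uniform ellipticity \ref{ass:ellipticity} (to handle the diffusion, via the pointwise Cauchy--Schwarz inequality in the $A_i$-norm) and on the weighted AM--GM inequality $a^{1-\tau}b^\tau\leq(1-\tau)a+\tau b$ (to handle the off-diagonal entries of $\veL$, which are nonnegative thanks to cooperativity \ref{ass:cooperative}) should yield $\cbQ\vew\leq((1-\tau)\lambda_{1,\mu_0 e}+\tau\lambda_{1,\mu_1 e})\vew$. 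Writing $\vew=\upe_{\mu_\tau e}\vev$ with $\vev$ space-time periodic positive and invoking the infimum characterization of $\lambda_{1,\mu_\tau e}$ then gives $\lambda_{1,\mu_\tau e}\geq (1-\tau)\lambda_{1,\mu_0 e}+\tau\lambda_{1,\mu_1 e}$, with strict inequality unless $\vef_0$ and $\vef_1$ are proportional, which cannot hold for $\mu_0\neq\mu_1$.

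Equipped with these two ingredients, the rest is routine. The negative quadratic term $-\mu^2 e\cdot A_i e$ in $\cbQ_{\mu e}$ combined with \ref{ass:ellipticity} gives $\lambda_{1,\mu e}\leq -\alpha\mu^2+O(\mu)$ for some $\alpha>0$ (for instance by testing the natural variational characterization against the constant vector $\veo$), so $g(\mu)\to+\infty$ as $\mu\to+\infty$; continuity of $g$ on $(0,+\infty)$ together with the two infinite limits forces a global minimum $\mu^\star_e\in(0,+\infty)$, unique by strict concavity. For the last assertion, fix $c>c^\star_e$ and consider $h(\mu)=-\lambda_{1,\mu e}-c\mu$: it is continuous on $[0,+\infty)$, strictly convex, and satisfies $h(\mu^\star_e)=\mu^\star_e(c^\star_e-c)<0$, $h(0)=-\lambda_{1,\upp}>0$, $h(+\infty)=+\infty$; two applications of the intermediate value theorem, one on each side of $\mu^\star_e$, produce two zeros $\muwed_e<\mu^\star_e<\muvee_e$, and strict convexity of $h$ excludes a third.

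The main obstacle I foresee is the Hadamard-power step: the cross-terms produced by the off-diagonal entries of $\veL$ acting on the product $\vef_0^{\circ(1-\tau)}\circ\vef_1^{\circ\tau}$ must be controlled component-wise, and it is precisely there that cooperativity \ref{ass:cooperative} is essential. The argument is standard for scalar equations but requires genuine bookkeeping in the system setting; should the direct computation prove delicate, one may invoke instead the general concavity result for periodic principal eigenvalues of parameter-dependent cooperative systems established in \cite{Girardin_Mazari_2022}, which was developed precisely for operators of the form $\cbQ_{\mu e}$.
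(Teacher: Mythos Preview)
The paper does not prove this lemma; it is recalled from \cite{Girardin_2023}, and the key ingredient---strict concavity of $\mu\mapsto\lambda_{1,\mu e}$---is itself quoted from \cite[Theorem 1.3]{Girardin_Mazari_2022} elsewhere in the paper. Your overall strategy (strict concavity plus the two endpoint limits of $g$) is the right one and coincides with what those references do.

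That said, your Hadamard-interpolation sketch has both inequalities pointing the wrong way. Writing $g_{j}=\nabla\log f_{j,i}$, one computes
\[
\frac{\nabla\cdot(A_i\nabla w_i)}{w_i}=\bigl((1-\tau)g_0+\tau g_1\bigr)\cdot A_i\bigl((1-\tau)g_0+\tau g_1\bigr)+(\text{terms affine in }\tau),
\]
and convexity of the quadratic form $v\mapsto v\cdot A_i v$ yields $\nabla\cdot(A_i\nabla w_i)/w_i\leq(1-\tau)\nabla\cdot(A_i\nabla f_{0,i})/f_{0,i}+\tau\nabla\cdot(A_i\nabla f_{1,i})/f_{1,i}$. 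Since this term enters $\caP_i$ with a minus sign, the outcome is $\caP_i w_i/w_i\geq(1-\tau)\caP_i f_{0,i}/f_{0,i}+\tau\caP_i f_{1,i}/f_{1,i}$, and after the AM--GM step on the cooperative off-diagonal coupling one obtains $\cbQ\vew\geq\Lambda\vew$ with $\Lambda=(1-\tau)\lambda_{1,\mu_0 e}+\tau\lambda_{1,\mu_1 e}$, not $\cbQ\vew\leq\Lambda\vew$. Correspondingly, the infimum characterization stated in the paper (which from a positive $\veu$ with $\cbQ\veu\leq\lambda\veu$ delivers only $\lambda_{1}\leq\lambda$) cannot yield $\lambda_{1,\mu_\tau e}\geq\Lambda$; you need the dual supremum characterization, or equivalently a direct comparison of the periodic factor $\vev$ with the principal eigenfunction of $\cbQ_{\mu_\tau e}$ via the strong maximum principle for cooperative irreducible systems. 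Once both directions are corrected, the remainder of your outline (the coercivity at $\mu\to+\infty$ via testing $\veo$, the blow-up at $\mu\to 0^+$ from $\lambda_{1,\upp}<0$, uniqueness of $\mu_e^\star$ from strict concavity, and the two roots of $h$ from strict convexity) goes through.
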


\section{Proof of Theorem \ref{thm:nonexistence}}

\revision{This proof uses the following results from the previous work by the first author. Their proof is omitted here.

\begin{lem}[Corollary 2.2 in \cite{Girardin_2023}]\label{lem:global_bound_entire}
    There exists a constant $K>0$ such that all nonnegative globally bounded entire solutions $\veu$ of \eqref{sys:KPP} satisfy
    \begin{equation*}
	\veu\leq K\veo\quad\text{in }\R\times\R^n.
    \end{equation*}
\end{lem}

\begin{lem}[Corollary 2.7 in \cite{Girardin_2023}]\label{lem:comparison_of_components_in_real_time}
    Let $M>0$ and $\veu_{\upini}\in\caC_b(\R^n,[0,M]^N)$.

    Then there exists $p\in(0,1)$ and $\kappa_M>0$ such that the solution $\veu$ of the Cauchy problem
    associated with \eqref{sys:KPP} and with the initial condition
    \begin{equation}
	\veu(0,x)=\veu_{\upini}(x)\quad\text{for all }x\in\R^n.
    \end{equation}
    satisfies
    \begin{equation*}
	\forall i,j\in[N]\quad u_j(t,x)\leq \kappa_M u_i(t,x)^p\quad\text{for all }(t,x)\in[1,+\infty)\times\R^n.
    \end{equation*}
\end{lem}}

\begin{proof}[Proof of Theorem \ref{thm:nonexistence}]
    Assume there exists $e\in\Sn$, $c<c_e^\star$ and an almost planar generalized transition wave $\veu$
    in direction $-e$ with global mean speed $c$.
    Note that the limit \ref{eq:definition_GTW_limit_negative_infinity} in Definition \ref{defi:GTW} implies
    \begin{equation}
	\veu\left(t,-\frac{c+c_e^\star}{2}te\right)=\veu^{\tl}\left( t,-\frac{c_e^\star-c}{2}te \right)\to \vez
	\quad\text{as }t\to+\infty.
	\label{eq:nonexistence_slow_spreading}
    \end{equation}

    \revision{By combining Lemmas \ref{lem:global_bound_entire} and \ref{lem:comparison_of_components_in_real_time},}
    it \revision{follows} that, for a certain $D>0$ and $p\in(0,1)$,
    \begin{equation*}
	\veL\veu - \veB\veu\circ\veu\geq\veL\veu-D\veu^{\circ(1+p)}\quad\text{in }\R\times\R^n.
    \end{equation*}
    This is a key comparison between the non-cooperative reaction term and a semilinear cooperative reaction term.
    Hence, by virtue of the comparison principle applied to the cooperative operator 
    $\vev\mapsto\cbQ\vev+D\vev^{\circ(1+p)}$, for any initial condition $\underline{\veu}_0$ satisfying, for
    all $x\in\R^n$, $\underline{\veu}_0(x)\leq\veu(0,x)$, the solution 
    $\underline{\veu}:[0,+\infty)\times\R^n\to\R^N$ of the Cauchy problem
    \begin{equation*}
	\begin{dcases}
	    \cbQ\underline{\veu} = -D\underline{\veu}^{\circ(1+p)} & \text{in }(0,+\infty)\times\R^n, \\
	    \underline{\veu}(0,\cdot)=\underline{\veu}_0 & \text{in }\R^n,
	\end{dcases}
    \end{equation*}
    satisfies $\veu\geq\underline{\veu}$ in $[0,+\infty)\times\R^n$.
    Hence, from \eqref{eq:nonexistence_slow_spreading}, 
    \begin{equation}
	\underline{\veu}\left(t,-\frac{c+c_e^\star}{2}te\right)\to \vez\quad\text{as }t\to+\infty.
	\label{eq:nonexistence_slow_spreading_subsolution}
    \end{equation}

    Now we specify $\underline{\veu}_0:x\mapsto \varepsilon H(x\cdot e)\veo$, where $H$ is the Heavyside function
    equal to $0$ in $(-\infty,0)$ and to $+1$ in $(0,+\infty)$, and where $\varepsilon$ is chosen appropriately small
    so that $\underline{\veu}_0(\cdot)\leq\veu(0,\cdot)$ holds true (this is indeed possible because
    $\veu$ is pointwise positive and, as $x\cdot e\to+\infty$, it satisfies the limit \ref{eq:definition_GTW_limit_positive_infinity}).

    Since $\vev\mapsto\cbQ\vev+D\vev^{\circ(1+p)}$ is a cooperative operator with space-time periodic
    coefficients, we can apply to $\underline{\veu}$ a known spreading result for front-like
    initial data \cite[Theorem 2.1]{Du_Li_Shen_2022} and deduce that $\underline{\veu}$ spreads at speed 
    $c_e^\star$, and in particular
    \begin{equation*}
	\liminf_{t\to+\infty}\min_{i\in[N]}\underline{u}_i\left(t,-\frac{c+c_e^\star}{2}te\right)>0.
    \end{equation*}
    But this contradicts directly \eqref{eq:nonexistence_slow_spreading_subsolution}.
\end{proof}

\section{Proof of Theorem \ref{thm:supercritical_existence}}

In this section, we assume $\lambda_{1,\upp}<0$ and we prove the existence of pulsating traveling 
waves for all $e\in\Sn\cap\Q^n$ and $c\in(c^\star_e,+\infty)\cap\Q$.

The wave direction $e$ and the wave speed $c$ are fixed once and for all. 
Recall the notations $T_{e,c}$ and $L_e$ for the minimal periods of the coefficients of \eqref{sys:KPP_moving_frame}
\revision{deduced from \eqref{eq:periodicity_new_variables}}. 
For brevity, we omit thereafter the subscripts $e$ and $c$ in all notations introduced earlier.
\revision{In this whole section, time periodicity implicitly refers to $T$-periodicity and space periodicity
implicitly refers to $L$-periodicity.}

\subsection{\revision{Recasting the problem by changing the variables and the coefficients}}\label{sec:change_of_variables_and_coefficients}

Let $P\in\R^{n\times n}$ be the orthogonal matrix associated with the change of variables replacing $x$ \revision{(the coordinates
in the canonical basis of $\R^n$)} into $x'$ \revision{(the coordinates in the new orthonormal basis $(e_\alpha)_{\alpha\in[n]}$)}, \textit{i.e.}
\begin{equation}
    \begin{pmatrix}x_1 \\ \vdots \\ x_{n-1} \\ x_n\end{pmatrix}
    = P\begin{pmatrix}x'_1 \\ \vdots \\ x'_{n-1} \\ x'_n\end{pmatrix}.
    \label{eq:change_of_variables}
\end{equation}
Remarking that $\nabla_{x'} = P^{\upT}\nabla_x = P^{-1}\nabla_x$, we set
\begin{equation*}
    A_i':(t,x')\mapsto P^{\upT}A_i^{\tl}(t,Px')P,
\end{equation*}
\begin{equation*}
    q_i':(t,x')\mapsto P^{\upT}(q_i^{\tl}+ce)(t,Px'),
\end{equation*}
\begin{equation*}
    \veL':(t,x')\mapsto\veL^{\tl}(t,Px'),
\end{equation*}
\begin{equation*}
    \veB':(t,x')\mapsto\veB^{\tl}(t,Px'),
\end{equation*}
\begin{equation}
    \cbR = \partial_t-\diag\left(\nabla_{x'}\cdot\left(A_i'\nabla_{x'}\right)\right)+\diag\left(q_i'\cdot \nabla_{x'}\right) - \veL'.
    \label{defi:operator_R}
\end{equation}
By construction, if $\veu'$ is a solution of $\cbR\veu'=-\veB'\veu'\circ\veu'$, 
then $\veu:(t,x)\mapsto\veu'(t,P^{\upT}(x+cte))$ is a solution of $\cbQ\veu=-\veB\veu\circ\veu$.

In the following construction, we only work with the variables $(t,x')$, never with the variables $(t,x)$. 
In order to simplify notations, we will prefer the generic notation $x'=(y,z)\in\R^{n-1}\times\R$, 
\textit{i.e.} $x_\alpha'=y_\alpha$ if $\alpha\in[n-1]$ and $x_n'=z$. 
The spatial periodicity cell $(0,L)=(0,L_1)\times(0,L_2)\times\dots\times(0,L_n)$ is then denoted $(0,L_y)\times(0,L_z)$, with
$L_y\in\R^{n-1}$ and $L_z\in\R$.
The notation $\nabla$ should be unambiguously understood as $\nabla_{x'}=\nabla_{(y,z)}$ 
in this construction. 
Also, in this coordinate system, $e$ becomes $e'=P^{\upT}e=(0,0,\dots,0,1)^{\upT}$,
$e\cdot\nabla_x$ becomes $e'\cdot\nabla=\partial_{x'_n}=\partial_z$ and $\nabla_x\cdot(A_i^{\tl} e)$ becomes 
$\nabla\cdot(A_i'e')$.

The family $(A_i')_{i\in[N]}$ of diffusivity matrices in the new coordinate system remains uniformly
elliptic, \textit{i.e.} it still satisfies \ref{ass:ellipticity}.
For any $\mu\geq 0$, the operator $\cbQ_{\mu}$ defined in \eqref{eq:Qmue} becomes 
\begin{equation*}
    \cbR_\mu = \cbR - \diag\left( 2\mu A_i'e'\cdot\nabla \right) - \diag\left(\mu^2 e'\cdot A_i' e'+\mu\nabla\cdot(A_i' e')-\mu q_i'\cdot e' \right).
\end{equation*}

\revision{The main interest of these changes of variables and coefficients is outlined in the following key proposition.
Recall that space and time periodicities are implicitly $L$- and $T$-periodicities.

\begin{prop}\label{prop:transformed_WPPTW_problem}
    The coefficients of the operator $\cbR$ are space-time periodic. 
    So are the coefficients of $\cbR_\mu$ for any $\mu\geq 0$.

    The set of \revision{planar pulsating traveling wave solutions in direction $-e$ at speed $c$ with time and transverse
    space periodicity} is in bijection with the set
    of nonnegative globally bounded classical solutions $\veu'\in\caC^{1,2}_{\upp}(\R\times\R^{n-1},\caC^2(\R,\R^N))$ 
    of the following problem:
    \begin{equation}\label{sys:WPPTW_new_coordinates_new_coefficients}
	\begin{dcases}
	    \cbR\veu' = -(\veB'\veu')\circ\veu' & \text{in }\R\times\R^{n-1}\times\R, \\
	    \lim_{z\to-\infty}\max_{(t,y)\in[0,T]\times[0,L_y]}\max_{i\in[N]}u'_i(t,y,z)=0, \\
	    \liminf_{z\to+\infty}\min_{(t,y)\in[0,T]\times[0,L_y]}\min_{i\in[N]}u'_i(t,y,z)>0.
	\end{dcases}
    \end{equation}
\end{prop}

\begin{proof}
    We refer to \eqref{eq:periodicity_new_variables}, to Definition \ref{defi:PTW_transverse_space} and to the above construction.
\end{proof}}

Also, by construction, $\veu_\mu':(t,x')\mapsto\veu_\mu(t,Px'-cte)$ is a space-time periodic principal eigenfunction
of $\cbR_\mu$ (with periods $L$ and $T$).
Applying the Krein--Rutman theorem, we deduce by uniqueness that all space-time periodic principal eigenfunctions of
$\cbR_\mu$ with the same periods are positively proportional to $\veu_\mu'$. 

Finally, $(t,x)\mapsto\upe^{\mu e\cdot x}\veu_\mu(t,x)$ becomes $(t,y,z)\mapsto\upe^{\mu ct}\upe^{\mu z}\veu_\mu'(t,y,z)$,
and the identity \eqref{eq:ppe_Qmue} becomes:
\begin{equation}
    \cbR(\upe_{\mu e'}\veu_\mu')=(\lambda_{1,\mu}+c\mu)\upe_{\mu e'}\veu_\mu'.
    \label{eq:principal_eigenfunction_new_coordinates}
\end{equation}

For clarity, we divide the proof into parts.

\subsection{Systems in truncated cylinders}

In this subsection we fix $a>0$. We define the truncated cylinder
$\Sigma_a=(0,T)\times(0,L_y)\times(-a,a)$. We are going to construct a solution $\veu_a$ of the semilinear system 
$\cbR \veu = -\veB'\veu\circ\veu$ in $\Sigma_a$, 
with periodic boundary conditions in $(t,y)$ and appropriate inhomogeneous Dirichlet boundary conditions in $z$.

Although the semilinear non-cooperative system does not satisfy the comparison principle,
the construction, inspired by the traveling wave construction in homogeneous media \cite{Girardin_2016_2},
relies upon super- and sub-solutions, making it possible to apply a fixed point argument. 

Recall that, by virtue of the space-time
periodicity and of the Harnack inequality of Proposition \ref{prop:harnack_inequality} (see also \cite[Remark 2.1]{Girardin_Mazari_2022}) applied, \textit{mutatis mutandi}, to 
the operator $\cbR_{\mu}$, there exists $\kappa_\mu>0$ such that $\veu_\mu'\geq\kappa_\mu\veo$ 
globally in $\R\times\R^n$.
Moreover, it follows from the proof of the Harnack inequality that the Harnack constant can
be chosen in such a way that $\mu\in\R\mapsto\kappa_\mu$ is continuous.

\begin{lem}\label{lem:supersolution_in_truncated_domain}
    Let 
    \begin{equation}
	\overline{\veu}:(t,y,z)\mapsto\upe^{\muwed z}\veu_{\muwed}'(t,y,z).
	\label{defi:supersolution}
    \end{equation}

    Then it satisfies:
    \begin{equation}
	\cbR\overline{\veu}\geq\vez\quad\text{in }\Sigma_a.
	\label{eq:supersolution_in_truncated_domain}
    \end{equation}
\end{lem}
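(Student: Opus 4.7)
The plan is to derive the inequality as an immediate, and in fact sharp, consequence of the spectral identity \eqref{eq:principal_eigenfunction_new_coordinates} combined with the defining property of $\muwed$ provided by Lemma \ref{lem:minimal_c}. I actually expect to obtain the stronger conclusion that $\cbR\overline{\veu}=\vez$ identically on $\R\times\R^n$, which a fortiori gives $\cbR\overline{\veu}\geq\vez$ on $\Sigma_a$ (the truncation to $\Sigma_a$ plays no role at this step, and the lemma is really a statement about the linearization at $\vez$).

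First, I would unfold the definition of $\overline{\veu}$ in the rotated coordinates. Since $e'=P^{\upT}e=(0,\dots,0,1)^{\upT}$, one has $\upe_{\muwed e'}(x')=\upe^{\muwed e'\cdot x'}=\upe^{\muwed z}$, so that
\[
\overline{\veu}(t,y,z)=\upe_{\muwed e'}(x')\,\veu_{\muwed}'(t,y,z).
\]
Applying the identity \eqref{eq:principal_eigenfunction_new_coordinates} with $\mu=\muwed$ then yields directly
\[
\cbR\overline{\veu}=(\lambda_{1,\muwed}+c\muwed)\,\overline{\veu}.
\]

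Second, I would invoke Lemma \ref{lem:minimal_c}: the hypothesis $c>c_e^{\star}$ guarantees that $\muwed=\muwed_e$ is well-defined and satisfies $\frac{-\lambda_{1,\muwed e}}{\muwed}=c$, equivalently $\lambda_{1,\muwed}+c\muwed=0$. Plugging this into the previous display gives $\cbR\overline{\veu}=\vez$ on all of $\R\times\R^n$, which implies the desired inequality. There is essentially no obstacle to overcome here; the substance of the argument has been packaged upstream, into the derivation of the conjugated eigenvalue relation \eqref{eq:principal_eigenfunction_new_coordinates} and into the existence of the smaller root $\muwed$ of the equation $-\lambda_{1,\mu e}=c\mu$ supplied by Lemma \ref{lem:minimal_c}.
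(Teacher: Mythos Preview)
Your proposal is correct and follows exactly the same approach as the paper: apply the eigen-identity \eqref{eq:principal_eigenfunction_new_coordinates} at $\mu=\muwed$ and use $\lambda_{1,\muwed}+c\muwed=0$ from Lemma~\ref{lem:minimal_c} to conclude $\cbR\overline{\veu}=\vez$. The paper's proof is the one-line version of what you wrote.
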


\begin{proof}
    The function $\overline{\veu}$ actually satisfies in $\Sigma_a$:
    \begin{equation*}
	\cbR\overline{\veu} = (\lambda_{1,\muwed}+c\muwed)\overline{\veu} = \vez.
    \end{equation*}
\end{proof}

\begin{lem}\label{lem:subsolution_in_truncated_domain}
    Let
    \begin{equation}
	\gamma=\frac{1}{2}\min\left\{\muwed,\muvee-\muwed\right\},
	\label{eq:def_gamma_subsolution}
    \end{equation}
    \begin{equation}
	M=\max\left\{\frac{1}{\kappa_{\muwed+\gamma}},\frac{N\displaystyle\max_{(i,j)\in[N]^2}\max_{(t,x)\in\clOmper}b_{i,j}(t,x)}{\left(\left( \lambda_{1,(\muwed+\gamma)e}+c(\muwed+\gamma) \right)\kappa_{\muwed+\gamma}\right)}\right\},
	\label{eq:def_M_subsolution}
    \end{equation}
    and
    \begin{equation}
	\underline{\veu}:(t,y,z)\mapsto\upe^{\muwed z}\veu_{\muwed}'(t,y,z)
	-M\upe^{(\muwed+\gamma)z}\veu_{\muwed+\gamma}'(t,y,z).
	\label{defi:subsolution}
    \end{equation}

    Then it satisfies:
    \begin{equation}
	\underline{\veu}\leq\vez\quad\text{in }\Sigma_a\cap\left\{ z\geq 0 \right\},
	\label{eq:subsolution_negative_in_R_plus}
    \end{equation}
    \begin{equation}
	\left( \cbR+\diag\left(\veB'\overline{\veu}\right)\right)\underline{\veu}\leq\vez\quad\text{in }\Sigma_a.
	\label{eq:subsolution_in_truncated_domain}
    \end{equation}
\end{lem}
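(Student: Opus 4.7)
The plan is a direct verification exploiting the eigenfunction identity \eqref{eq:principal_eigenfunction_new_coordinates} together with a sign split on each component of $\underline{\veu}$. A preliminary observation is that $\nu := \lambda_{1,(\muwed+\gamma)e} + c(\muwed+\gamma) > 0$: indeed, the choice \eqref{eq:def_gamma_subsolution} places $\muwed+\gamma$ strictly between $\muwed$ and $\muvee$, and the function $\mu\mapsto -\lambda_{1,\mu e}-c\mu$, convex in $\mu$ by the standard concavity of the principal eigenvalue in the exponential parameter and vanishing at $\muwed$ and $\muvee$ by Lemma \ref{lem:minimal_c}, is therefore strictly negative on $(\muwed,\muvee)$.

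For \eqref{eq:subsolution_negative_in_R_plus}, I would bound componentwise
\[
(\underline{u})_i \leq \upe^{\muwed z}\bigl(1 - M\kappa_{\muwed+\gamma}\upe^{\gamma z}\bigr),
\]
using the normalization $(u'_{\muwed})_i \leq 1$ and the uniform Harnack lower bound $(u'_{\muwed+\gamma})_i \geq \kappa_{\muwed+\gamma}$ recalled just before the statement. The inequality $M\kappa_{\muwed+\gamma}\geq 1$, which is the first entry of \eqref{eq:def_M_subsolution}, then yields a nonpositive bracket whenever $z \geq 0$.

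For \eqref{eq:subsolution_in_truncated_domain}, applying \eqref{eq:principal_eigenfunction_new_coordinates} to both summands of $\underline{\veu}$ gives
\[
\cbR\underline{\veu} = (\lambda_{1,\muwed e}+c\muwed)\upe^{\muwed z}\veu'_{\muwed} - M\nu\,\upe^{(\muwed+\gamma)z}\veu'_{\muwed+\gamma},
\]
whose first coefficient vanishes by the defining identity $c = -\lambda_{1,\muwed e}/\muwed$. I would then examine each component in turn. If $(\underline{u})_i \leq 0$, then $(\veB'\overline{\veu})_i(\underline{u})_i \leq 0$ (since $\veB' \geq \vez$ by \ref{ass:KPP} and $\overline{\veu}\geq\vez$) and $(\cbR\underline{\veu})_i \leq 0$, so the claim holds. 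If $(\underline{u})_i > 0$, the same pointwise bounds force $\upe^{\gamma z} < 1/(M\kappa_{\muwed+\gamma}) \leq 1$, hence $z < 0$. In that regime $(\underline{u})_i \leq \overline{u}_i \leq \upe^{\muwed z}$ and $(\veB'\overline{\veu})_i \leq N B_{\max}\upe^{\muwed z}$ with $B_{\max}=\max_{i,j}\max_{(t,x)\in\clOmper} b_{i,j}(t,x)$, so
\[
(\cbR\underline{\veu})_i + (\veB'\overline{\veu})_i(\underline{u})_i \leq \upe^{(\muwed+\gamma)z}\bigl(-M\nu\kappa_{\muwed+\gamma} + N B_{\max}\,\upe^{(\muwed-\gamma)z}\bigr);
\]
since $\muwed - \gamma > 0$ and $z < 0$ give $\upe^{(\muwed-\gamma)z} < 1$, the second entry of \eqref{eq:def_M_subsolution} makes this nonpositive.

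The computation is essentially bookkeeping, and the constants $\gamma$ and $M$ were engineered precisely to close it. The only step requiring real insight is the observation that $\{(\underline{u})_i > 0\}$ is confined to $\{z < 0\}$, which is what lets the decaying exponential $\upe^{(\muwed-\gamma)z}$ absorb the quadratic correction coming from $\diag(\veB'\overline{\veu})\underline{\veu}$; the strict positivity $\nu > 0$, via the convex-type behavior of $\mu\mapsto -\lambda_{1,\mu e}-c\mu$, is the other non-mechanical input.
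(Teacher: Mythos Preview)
Your proof is correct and follows essentially the same approach as the paper's: both compute $\cbR\underline{\veu}=-M\nu\,\upe^{(\muwed+\gamma)z}\veu'_{\muwed+\gamma}$, use the Harnack bound $\veu'_{\muwed+\gamma}\geq\kappa_{\muwed+\gamma}\veo$ and the normalization $\veu'_{\muwed}\leq\veo$, and then exploit $\muwed>\gamma$ together with the confinement of the positive part of $\underline{\veu}$ to $\{z<0\}$. The only organizational difference is that you split componentwise into the cases $(\underline{u})_i\leq 0$ and $(\underline{u})_i>0$, whereas the paper handles both at once via the bound $\underline{\veu}\leq\upe^{\muwed z}(1-M\kappa_{\muwed+\gamma}\upe^{\gamma z})^+\veo$ and then compares $\upe^{(\muwed+\gamma)z}$ with $\upe^{2\muwed z}(1-M\kappa_{\muwed+\gamma}\upe^{\gamma z})^+$ directly; the two are equivalent bookkeeping.
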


\begin{proof}
    We fix an arbitrary $(t,y,z)\in\Sigma_a$ and we omit arguments in $(t,y)$ 
    for readability.

    On one hand, with the same calculations as in the proof of Lemma 
    \ref{lem:supersolution_in_truncated_domain}, 
    \begin{equation*}
	\cbR\underline{\veu}=-M\left(\lambda_{1,\muwed+\gamma}+c(\muwed+\gamma)\right)\upe^{(\muwed+\gamma)z}\veu_{\muwed+\gamma}'.
    \end{equation*}
    By definition of $\gamma$, $\muwed+\gamma\in(\muwed,\muvee)$, whence, by virtue of 
    Lemma \ref{lem:minimal_c},
    \begin{equation*}
	\lambda_{1,\muwed+\gamma}+c(\muwed+\gamma)>0.
    \end{equation*}
    Using $\veu_{\muwed+\gamma}'\geq\kappa_{\muwed+\gamma}\veo$ and then the definition of $M$,
    \begin{equation*}
	\begin{split}
	    -\cbR\underline{\veu} & =M\left(\lambda_{1,\muwed+\gamma}+c(\muwed+\gamma)\right)\upe^{(\muwed+\gamma)z}\veu_{\muwed+\gamma}' \\
	    & \geq M\left( \lambda_{1,\muwed+\gamma}+c(\muwed+\gamma) \right)\upe^{\left( \muwed+\gamma \right)z}\kappa_{\muwed+\gamma}\veo \\
	    & \geq \upe^{\left( \muwed+\gamma \right)z}N\max_{(i,j)\in[N]^2}\max_{\clOmper}b_{i,j}\veo.
	\end{split}
    \end{equation*}

    On the other hand, by definition of $\underline{\veu}$ and using $\veu_{\muwed}'\leq\veo$
    and $\veu_{\muwed+\gamma}'\geq\kappa_{\muwed+\gamma}\veo$,
    \begin{equation*}
	\underline{\veu}\leq\upe^{\muwed z}(1-M\upe^{\gamma z}\kappa_{\muwed+\gamma})\veo\leq\upe^{\muwed z}(1-M\upe^{\gamma z}\kappa_{\muwed+\gamma})^{+}\veo,
    \end{equation*}
    whence, by nonnegativity of $\veB'\overline{\veu}$, definition of $\overline{\veu}$ and using again
    $\veu_{\muwed}\leq\veo$,
    \begin{equation*}
	(\veB'\overline{\veu})\circ\underline{\veu} \leq \upe^{2\muwed z}(1-M\upe^{\gamma z}\kappa_{\muwed+\gamma})^{+}N\max_{(i,j)\in[N]^2}\max_{\clOmper}b_{i,j}\veo.
    \end{equation*}

    Note that $(1-M\upe^{\gamma z}\kappa_{\muwed+\gamma})^{+}> 0$ if and only if
    $z< z_0=-\frac{1}{\gamma}\ln\left( M\kappa_{\muwed+\gamma} \right)$.
    By definition of $M$, $z_0\leq 0$. By definition of $\gamma$, $\muwed>\gamma$.
    Hence we deduce the inequality 
    $\upe^{\left( \muwed+\gamma \right)z}\geq\upe^{2\muwed z}(1-M\upe^{\gamma z}\kappa_{\muwed+\gamma})^{+}$.
    
    Consequently $-\cbR\underline{\veu}\geq(\veB'\overline{\veu})\circ\underline{\veu}$ in
    $\Sigma_a$ and the proof is ended.
\end{proof}

The function $\vez$ is obviously another sub-solution for the linear cooperative operator 
$\cbR+\diag(\veB'\overline{\veu})$.
Therefore the component-by-component maximum of $\underline{\veu}$ and $\vez$, denoted hereafter
$\underline{\veu}\vee\vez$, still provides a sub-solution, that can be used in a generalized
version of the existence--comparison principle for linear cooperative operators. 
This is stated in the following corollary.

\begin{cor}\label{cor:generalized_subsolution_system_truncated_domain}
    With the notations of Lemma \ref{lem:subsolution_in_truncated_domain}, 
    \begin{equation}
	\left( \cbR+\diag\left(\veB'\overline{\veu}\right)\right)\left(\underline{\veu}\vee\vez\right)\leq\vez\quad\text{in }\Sigma_a
	\label{eq:generalized_subsolution_system_truncated_domain}
    \end{equation}
    where partial derivatives are understood in distributional sense where necessary. The function 
    $\underline{\veu}\vee\vez$ is globally Lipschitz-continuous.
\end{cor}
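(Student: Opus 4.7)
\medskip

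\noindent\textbf{Proof proposal.} The plan is to verify that $\underline{\veu}\vee\vez$ inherits the sub-solution property from the two sub-solutions $\underline{\veu}$ (given by Lemma \ref{lem:subsolution_in_truncated_domain}) and the trivial one $\vez$. The key observation is that the linear operator $\cbR+\diag(\veB'\overline{\veu})$ is \emph{cooperative}: its off-diagonal entries are $-l'_{ij}$ for $i\neq j$, which are non-positive by \ref{ass:cooperative} (the cooperativity assumption is preserved by the orthogonal change of coordinates leading to $\cbR$), and adding the diagonal potential $\diag(\veB'\overline{\veu})$ preserves this sign structure. For such an operator, the component-wise maximum of two sub-solutions is itself a sub-solution in the distributional sense; I would either invoke this vector-valued Kato-type statement from the literature on weakly coupled parabolic systems or verify it by the short direct computation sketched below.

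For the direct verification, fix $i\in[N]$ and write $w_i=\max(\underline{u}_i,0)$. Since $\underline{\veu}$ is smooth, $w_i$ is Lipschitz with $\nabla w_i=\mathbf{1}_{\underline{u}_i>0}\nabla\underline{u}_i$ almost everywhere, and a standard Kato truncation argument shows that the distributional parabolic part applied to $w_i$ is dominated pointwise by $\mathbf{1}_{\underline{u}_i>0}\bigl(\partial_t\underline{u}_i-\nabla\cdot(A_i'\nabla\underline{u}_i)+q_i'\cdot\nabla\underline{u}_i\bigr)$. Hence, on the open set $\{\underline{u}_i>0\}$, the $i$-th component of $(\cbR+\diag(\veB'\overline{\veu}))(\underline{\veu}\vee\vez)$ is bounded above by
\[
    \bigl((\cbR+\diag(\veB'\overline{\veu}))\underline{\veu}\bigr)_i+\sum_{j\neq i}l'_{ij}(\underline{u}_j-w_j),
\]
which is non-positive because the first summand is $\leq 0$ by \eqref{eq:subsolution_in_truncated_domain} and because $l'_{ij}\geq 0$ together with $w_j\geq\underline{u}_j$ force the second summand to be $\leq 0$. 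On the open set $\{\underline{u}_i<0\}$, all derivatives of $w_i$ vanish and the $i$-th component reduces to $-\sum_{j\neq i}l'_{ij}w_j\leq 0$, again by \ref{ass:cooperative} and the non-negativity of $w_j$. The interface $\{\underline{u}_i=0\}$ has Lebesgue measure zero and contributes no singular term to the distributional derivatives of $w_i$, which is precisely the content of the Kato truncation lemma.

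Finally, the Lipschitz claim is immediate: $\underline{\veu}$ is a finite linear combination of products of exponentials in $z$ with the smooth space-time periodic factors $\veu_{\muwed}'$ and $\veu_{\muwed+\gamma}'$, hence is $\caC^{1,2}$ on the bounded cylinder $\Sigma_a$ and in particular globally Lipschitz there, and the pointwise maximum with $\vez$ preserves Lipschitz continuity. The main delicate step of the whole argument is the distributional treatment at the interface $\{\underline{u}_i=0\}$; everything else is sign bookkeeping exploiting the cooperativity of $\cbR+\diag(\veB'\overline{\veu})$ together with the monotonicity $w_j\geq\underline{u}_j$.
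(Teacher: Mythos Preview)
Your proposal is correct and matches the paper's approach: the paper does not give a detailed proof but simply notes, just before the corollary, that $\vez$ is another sub-solution for the cooperative linear operator $\cbR+\diag(\veB'\overline{\veu})$ and that the component-wise maximum of two sub-solutions of such an operator remains a (distributional) sub-solution. Your Kato-truncation computation supplies precisely the details the paper leaves implicit; the only minor imprecision is that the measure-zero claim on $\{\underline{u}_i=0\}$ is not strictly needed, since the Kato inequality already delivers the correct distributional sign regardless of the size of the interface.
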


We are now in a position to construct a bounded nonnegative entire solution of $\cbR\veu=-\veB'\veu\circ\veu$ in $\Sigma$ with 
appropriate boundary conditions.

\begin{prop}\label{prop:existence_profile_in_truncated_domain}
    Let $a^\star>0$ such that
    \begin{equation}
	(\underline{\veu}\vee\vez)(t,y,-a^\star)=\underline{\veu}(t,y,-a^\star)\gg\vez\quad\text{for all }(t,y)\in[0,T]\times[0,L_y].
	\label{eq:values_of_a_large_enough}
    \end{equation}

    Then, for all $a\geq a^\star$, there exists a solution $\veu$ of
    \begin{equation}
	\begin{dcases}
	    \cbR\veu = -(\veB'\veu)\circ\veu & \text{in }\Sigma_a, \\
	    \veu(t,y,\pm a)=(\underline{\veu}\vee\vez)(t,y,\pm a) & \text{for all }(t,y)\in[0,T]\times[0,L_y], \\
	    (t,y)\mapsto[z\mapsto\veu(t,y,z)] & \text{is periodic}.
	\end{dcases}
	\label{sys:KPP_truncated_domain}
    \end{equation}

    Furthermore, it satisfies
    \begin{equation}
	\underline{\veu}\vee\vez\leq\veu\leq\overline{\veu}\quad\text{in }\Sigma_a.
	\label{eq:profile_trapping_truncated_domain}
    \end{equation}
\end{prop}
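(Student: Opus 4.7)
The plan is to adapt the fixed-point strategy of \cite{Girardin_2016_2} for space-time homogeneous coefficients to the present truncated cylinder with mixed periodic/Dirichlet boundary conditions. Although the full semilinear system is non-cooperative, freezing one factor of the quadratic nonlinearity produces the linear operator $\cbR + \diag(\veB'\vev)$, which is cooperative (its off-diagonal is $-\veL'$, whose entries are nonpositive by \ref{ass:cooperative}) and for which the super- and sub-solutions of Lemma \ref{lem:supersolution_in_truncated_domain} and Corollary \ref{cor:generalized_subsolution_system_truncated_domain} are tailored.

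Concretely, we will work on the closed convex set
\begin{equation*}
    \mathcal{K} = \left\{ \vev \in \caC(\overline{\Sigma_a}, [\vez,\vei)) \ : \ \vev \text{ is $(T,L_y)$-periodic in $(t,y)$ and } \underline{\veu}\vee\vez \leq \vev \leq \overline{\veu} \right\}
\end{equation*}
and define $\Phi(\vev) = \veu$ as the unique classical solution of the linear cooperative problem $\cbR\veu + \diag(\veB'\vev)\veu = \vez$ in $\Sigma_a$, with inhomogeneous Dirichlet data $(\underline{\veu}\vee\vez)(t,y,\pm a)$ at $z = \pm a$ and $(T,L_y)$-periodicity in $(t,y)$. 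Existence, uniqueness and parabolic $\caC^{1+\delta/2,2+\delta}$-regularity will follow from standard periodic-parabolic theory on the cylinder (via subtraction of a $(T,L_y)$-periodic lift of the Dirichlet data and positivity of the principal eigenvalue of $\cbR + \diag(\veB'\vev)$ on $\Sigma_a$, itself ensured by the nonnegative potential $\diag(\veB'\vev)$). The invariance $\Phi(\mathcal{K}) \subseteq \mathcal{K}$ is where the super- and sub-solutions enter: Lemma \ref{lem:supersolution_in_truncated_domain} gives $(\cbR + \diag(\veB'\vev))\overline{\veu} \geq \vez$, hence $\veu \leq \overline{\veu}$ by the cooperative maximum principle, while Corollary \ref{cor:generalized_subsolution_system_truncated_domain} combined with the componentwise inequality $\diag(\veB'\vev)(\underline{\veu}\vee\vez) \leq \diag(\veB'\overline{\veu})(\underline{\veu}\vee\vez)$ (using $\vev \leq \overline{\veu}$ and $\underline{\veu}\vee\vez \geq \vez$) yields $\underline{\veu}\vee\vez \leq \veu$. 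Compactness of $\Phi(\mathcal{K})$ in the $\caC(\overline{\Sigma_a})$ topology follows from Schauder regularity and Ascoli's theorem, continuity of $\Phi$ in the same topology follows from uniqueness of the linear problem together with the uniform regularity estimates, and Schauder's fixed point theorem produces $\veu = \Phi(\veu) \in \mathcal{K}$, which is a classical solution of \eqref{sys:KPP_truncated_domain} automatically satisfying \eqref{eq:profile_trapping_truncated_domain}.

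The main obstacle is the generalized comparison step at the level of the merely Lipschitz sub-solution $\underline{\veu}\vee\vez$: the classical strong maximum principle does not apply directly, so one must either invoke a Kato-type inequality for the cooperative system (testing the vector equation satisfied by $\veu - \underline{\veu}\vee\vez$ against its componentwise negative part), or approximate the operation $\cdot\vee\vez$ by a smooth family and pass to the limit. A secondary technical point is the precise periodic-parabolic linear theory on the truncated cylinder with mixed periodic/Dirichlet boundary conditions; this reduces to elliptic Fredholm theory for the period-$T$ solution map via the cooperative structure, so it is standard but worth verifying carefully before the fixed-point argument is invoked.
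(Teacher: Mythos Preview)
Your approach coincides with the paper's: freeze one factor of the nonlinearity to obtain the linear cooperative operator $\cbR+\diag(\veB'\vev)$, solve the resulting periodic--Dirichlet problem, trap the solution between $\underline{\veu}\vee\vez$ and $\overline{\veu}$, and close with Schauder's fixed point theorem (the paper works in $\caC^{\delta/2,\delta}$ rather than $\caC^0$, an inessential difference).

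There is, however, one genuine gap in your justification. You assert that the positivity of the periodic--Dirichlet principal eigenvalue of $\cbR+\diag(\veB'\vev)$ on $\Sigma_a$ is ``ensured by the nonnegative potential $\diag(\veB'\vev)$''. This is not a valid argument: monotonicity in the potential would only give the conclusion if the eigenvalue of $\cbR$ alone were already nonnegative, and that is not obvious---$\cbR$ carries the zeroth-order term $-\veL'$, its \emph{fully} periodic principal eigenvalue equals $\lambda_{1,\upp}<0$, and for $\vev=\underline{\veu}\vee\vez\in\mathcal{K}$ the added potential vanishes on the half-cylinder $\{z\geq 0\}$. The paper proves the positivity by a sliding argument instead: since $\overline{\veu}$ is uniformly positive on $\overline{\Sigma_a}$ and satisfies $(\cbR+\diag(\veB'\vev))\overline{\veu}\geq\cbR\overline{\veu}=\vez$, if the eigenvalue were nonpositive one could slide the Dirichlet eigenfunction $\vect{\varphi}$ (which vanishes at $z=\pm a$) up to a critical multiple $\kappa^\star\vect{\varphi}$ touching $\overline{\veu}$ at an interior point, and the strong maximum principle for cooperative systems would then force $\kappa^\star\vect{\varphi}=\overline{\veu}$ on $\overline{\Sigma_a}$, contradicting the boundary values. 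You already have $\overline{\veu}$ in hand, so the repair is immediate; but as written your reasoning would apply verbatim when $c<c^\star$, where no such positive super-solution exists and the conclusion fails for large $a$.
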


\begin{proof}
    The existence of $a^\star$, and the fact that any
    value $a\geq a^\star$ actually satisfies the same condition \eqref{eq:values_of_a_large_enough}, 
    can be reformulated as the boundedness from below of the set
    \begin{equation*}
	\left\{ z\in\R\ |\ \exists(t,y)\in[0,T]\times[0,L_y]\quad \exists i\in[N]\quad \underline{u}_i(t,y,z)<0 \right\}.
    \end{equation*}
    This bound is a direct consequence of the definition of $\underline{\veu}$.

    The core of the proof will use the following two differential inequalities, valid for any 
    H\"{o}lder-continuous function $\ver\in[\vez,\overline{\veu}]$:
    \begin{equation}
	\left( \cbR+\diag\left( \veB'\ver \right) \right)\left( \underline{\veu}\vee\vez \right)\leq\vez\quad\text{in }\Sigma_a,
	\label{eq:generalized_subsolution_system}
    \end{equation}
    \begin{equation}
	\left( \cbR+\diag\left( \veB'\ver \right) \right)\overline{\veu}\geq\vez\quad\text{in }\Sigma_a.
	\label{eq:generalized_supersolution_system}
    \end{equation}
    These are consequences of obvious monotonicity properties of $\vev\mapsto\veB'\vev$ implied by \ref{ass:KPP} and of 
    Corollary \ref{cor:generalized_subsolution_system_truncated_domain} and 
    Lemma \ref{lem:supersolution_in_truncated_domain}.

    To begin the proof, we fix temporarily a H\"{o}lder-continuous function $\ver\in[\vez,\overline{\veu}]$ 
    and we study the well-posedness of:
    \begin{equation}
	\begin{dcases}
	    (\cbR + \diag(\veB'\ver))\veu = \vez & \text{in }\Sigma_a, \\
	    \vep(t,y,\pm a)=(\underline{\veu}\vee\vez)(t,y,\pm a) & \text{for all }(t,y)\in[0,T]\times[0,L_y], \\
	    (t,y)\mapsto[z\mapsto\vep(t,y,z)] & \text{is periodic}.
	\end{dcases}
	\label{sys:linear_system_truncated_domain_heterogeneous_boundary_conditions}
    \end{equation}

    Since it is more convenient to manipulate homogeneous Dirichlet boundary conditions, let us transform the system
    into an equivalent one.
    By definition of $\overline{\veu}$ and $\underline{\veu}$, there exists 
    $f\in\caC^\infty\left([-a,a],[0,+\infty)\right)$ such that $f(-a)=1$, $f(a)=0$ and,
    for all $(t,y,z)\in\overline{\Sigma_a}$, 
    \begin{equation*}
	(\underline{\veu}\vee\vez)(t,y,z)\leq f(z)\underline{\veu}(t,y,-a)\leq \frac12\left((\underline{\veu}\vee\vez)(t,y,z)+\overline{\veu}(t,y,z)\right).
    \end{equation*}
    By setting $\vef=f\underline{\veu}(-a)$ and changing $\veu$ into $\widetilde{\veu}=\veu-\vef$, we find that $\veu$ solves 
    \eqref{sys:linear_system_truncated_domain_heterogeneous_boundary_conditions} if and only if $\widetilde{\veu}$ solves:
    \begin{equation}
	\begin{dcases}
	    (\cbR + \diag(\veB'\ver))\widetilde{\veu} = - (\cbR+\diag(\veB'\ver))\vef & \text{in }\Sigma_a, \\
	    \widetilde{\veu}(t,y,\pm a)=\vez & \text{for all }(t,y)\in[0,T]\times[0,L_y], \\
	    (t,y)\mapsto[z\mapsto\widetilde{\veu}(t,y,z)] & \text{is periodic}.
	\end{dcases}
	\label{sys:linear_system_truncated_domain_homogeneous_boundary_conditions}
    \end{equation}

    In order to verify that this new system is well-posed, we have to verify the invertibility of the associated linear operator.
    Since the principal eigenvalue of a monotone linear operator is the leftmost point of its spectrum in $\mathbb{C}$, it is 
    sufficient to prove that this principal eigenvalue is positive. Denote $\lambda_{1,\upp-\upDir}(\cbR+\diag(\veB'\ver))$ 
    the principal eigenvalue of $\cbR+\diag(\veB'\ver)$ with time periodicity and periodic--Dirichlet boundary conditions in space.     
    Assume by contradiction 
    \begin{equation*}
	\lambda_{1,\upp-\upDir}(\cbR+\diag(\veB'\ver))\leq 0.
    \end{equation*}
    On one hand, let $\vect{\varphi}$ be its associated periodic--Dirichlet principal eigenfunction. It satisfies
    \begin{equation*}
	(\cbR+\diag(\veB'\ver))\vect{\varphi}=\lambda_{1,\upp-\upDir}(\cbR+\diag(\veB'\ver))\vect{\varphi}\leq\vez\quad\text{in }\Sigma_a.
    \end{equation*}
    On the other hand, the function $\overline{\veu}\gg\vez$ satisfies \eqref{eq:generalized_supersolution_system}.
    Therefore, for a sufficiently small $\kappa>0$, the following inequalities hold true in $\Sigma_a$:
    \begin{equation*}
	\vez\leq\kappa\vect{\varphi}\leq\overline{\veu}\quad\text{and}\quad
	(\cbR+\diag(\veB'\ver))(\overline{\veu}-\kappa\vect{\varphi})\geq\vez.
    \end{equation*}
    For the supremum $\kappa^\star$ of such $\kappa$, there exists $(t^\star,y^\star,z^\star)\in\overline{\Sigma_a}$
    and $i^\star\in[N]$ such that $\overline{u}_{i^\star}(t^\star,y^\star,z^\star)-\kappa^\star\varphi_{i^\star}(t^\star,y^\star,z^\star)=0$.
    In view of the boundary conditions, it occurs in fact at $(t^\star,y^\star,z^\star)\in\Sigma_a$.
    Then the strong comparison principle for linear cooperative operators and the continuity show that 
    $\kappa^\star\vect{\varphi}=\overline{\veu}$ in $\overline{\Sigma_a}$, a contradiction with the boundary conditions. 
    Hence $\lambda_{1,\upp-\upDir}(\cbR+\diag(\veB'\ver))>0$. 

    By \eqref{eq:generalized_subsolution_system}, the function $\underline{\veu}\vee\vez-\vef\leq\vez$ 
    is a sub-solution of \eqref{sys:linear_system_truncated_domain_homogeneous_boundary_conditions}. 
    By \eqref{eq:generalized_supersolution_system}, the function $\overline{\veu}-\vef\geq\vez$ is a 
    super-solution of \eqref{sys:linear_system_truncated_domain_homogeneous_boundary_conditions}. 
    By monotonicity of the operator, the unique classical solution $\widetilde{\veu}$ of 
    \eqref{sys:linear_system_truncated_domain_homogeneous_boundary_conditions} satisfies
    $\underline{\veu}\vee\vez-\vef\leq\widetilde{\veu}\leq\overline{\veu}-\vef$ in $\Sigma_a$. 

    Therefore, the equivalent problem \eqref{sys:linear_system_truncated_domain_heterogeneous_boundary_conditions}
    is well-posed, and its solution, denoted $\veu_{\ver}$, satisfies
    $\underline{\veu}\vee\vez\leq\veu_{\ver}\leq\overline{\veu}$ in $\Sigma_a$.

    Subsequently, the mapping $\vect{\Phi}:\ver\mapsto\veu_{\ver}$ defined on the following convex set:
    \begin{equation*}
	\mathcal{F} = \left\{ \ver\in\caC^{\delta/2,\delta}_{\upp}\left(\R\times\R^{n-1},\caC^{\delta}\left( [-a,a],\R^N \right)\right)\ |\ \underline{\veu}\vee\vez\leq\ver\leq\overline{\veu} \right\}
    \end{equation*}
    is well-defined and satisfies $\vect{\Phi}(\mathcal{F})\subset\mathcal{F}$.
    Furthermore, by classical parabolic estimates and compact embeddings of H\"{o}lder spaces, $\vect{\Phi}$ is a compact mapping.
    Therefore, by virtue of the Schauder fixed point theorem, there exists a solution $\veu$ of 
    \eqref{sys:KPP_truncated_domain}, that satisfies \eqref{eq:profile_trapping_truncated_domain}.
\end{proof}

\subsection{System in the non-truncated cylinder}
In this subsection, we pass to the limit $a\to+\infty$ and obtain an entire solution of $\cbR\veu=-\veB'\veu\circ\veu$.
Recall that $\Sigma_a=(0,T)\times(0,L_y)\times(-a,a)$.

To do so, we first need a pointwise bound uniform with respect to $a\geq a^\star$, where $a^\star$ is defined
in Proposition \ref{prop:existence_profile_in_truncated_domain}.
To emphasize its dependency on $a$, we denote $\veu_a$ the solution of 
\eqref{sys:KPP_truncated_domain} constructed in the previous subsection.

\begin{lem}\label{lem:uniform_pointwise_bound}
    There exists $K>0$, dependent only on bounds on $\veL$ and $\veB$, such that, up to increasing $a^\star$ in a 
    way that only depends on $K$ and $\muwed$,
    \begin{equation}
	0\leq\veu_a\leq K\veo\quad\text{in }\overline{\Sigma_a}\quad\text{for all }a\geq a^\star.
	\label{eq:uniform_pointwise_bound}
    \end{equation}
\end{lem}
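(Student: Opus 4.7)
The plan is to treat the two inequalities separately. The lower bound $\vez\leq\veu_a$ is immediate from \eqref{eq:profile_trapping_truncated_domain}, since $\underline{\veu}\vee\vez\geq\vez$ by construction. For the upper bound, the natural strategy is to set
\[
    M_a=\max_{i\in[N]}\sup_{(t,x')\in\overline{\Sigma_a}}u_{a,i}(t,x'),
\]
assume by contradiction that $M_a>K$, and apply the parabolic maximum principle to the component realizing the maximum; the key is that the quadratic dissipation $-(\veB'\veu_a)\circ\veu_a$ eventually dominates the linear reaction $\veL'\veu_a$ because of the positivity of $\underline{\veB}$ granted by \ref{ass:KPP}.

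As a preliminary step, I would analyze the lateral boundary data. At $z=+a$, combining $\veu'_{\muwed}\leq\veo$, $\veu'_{\muwed+\gamma}\geq\kappa_{\muwed+\gamma}\veo$ and the choice of $M$ in \eqref{eq:def_M_subsolution} (which forces $M\kappa_{\muwed+\gamma}\geq 1$), one obtains $\underline{\veu}(t,y,+a)\leq\upe^{\muwed a}(1-M\upe^{\gamma a}\kappa_{\muwed+\gamma})\veo\leq\vez$ as soon as $a\geq 0$, so the Dirichlet datum $(\underline{\veu}\vee\vez)(\cdot,\cdot,+a)$ vanishes identically. At $z=-a$, the bound $\underline{\veu}\leq\overline{\veu}=\upe^{\muwed z}\veu'_{\muwed}$ together with $\veu'_{\muwed}\leq\veo$ gives $(\underline{\veu}\vee\vez)(t,y,-a)\leq\upe^{-\muwed a}\veo$, which is $\leq K\veo$ provided $a\geq-\muwed^{-1}\ln K$. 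This is precisely the enlargement of $a^\star$ alluded to in the statement, and it depends only on $K$ and $\muwed$.

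I would then pick
\[
    K=\max\left\{1,\ \frac{N\max_{i,j\in[N]}\sup_{\clOmper}|l_{i,j}|}{\min_{i\in[N]}\underline{b}_{i,i}}\right\},
\]
which depends only on $N$ and on bounds on $\veL$ and $\veB$. Suppose for contradiction that $M_a>K$. Thanks to the boundary analysis above and to the $(t,y)$-periodicity of $\veu_a$, the maximum is attained at some $(t_0,y_0,z_0)$ with $z_0\in(-a,a)$ for some index $i_0\in[N]$. At such an interior parabolic maximizer, the first- and second-order conditions give $\partial_t u_{a,i_0}=0$, $\nabla u_{a,i_0}=\vez$, and $-\nabla\cdot(A'_{i_0}\nabla u_{a,i_0})\geq 0$ (by positive definiteness of $A'_{i_0}$ and negative semi-definiteness of the spatial Hessian of $u_{a,i_0}$). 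Substituting into the $i_0$-th equation of $\cbR\veu_a=-(\veB'\veu_a)\circ\veu_a$ and bounding $0\leq u_{a,j}\leq M_a$, $b'_{i_0,j}\geq\underline{b}_{i_0,j}\geq 0$, I would obtain
\[
    \underline{b}_{i_0,i_0}M_a^{2}\leq M_a\sum_{j\in[N]}b'_{i_0,j}u_{a,j}\leq\sum_{j\in[N]}l'_{i_0,j}u_{a,j}\leq N\max_{i,j\in[N]}\sup_{\clOmper}|l_{i,j}|\,M_a,
\]
contradicting $M_a>K$.

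The main subtlety is justifying that the maximum is attained in the parabolic interior: periodicity in $(t,y)$ handles the temporal and transverse spatial faces, and the lateral boundary calculation at $z=\pm a$ is what rules out the faces $\{z=\pm a\}$. Once this is in place, everything reduces to a direct application of the scalar parabolic maximum principle to the component-by-component maximum, combined with the KPP structure used to bring $\underline{\veB}$ into play.
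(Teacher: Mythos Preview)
Your proof is correct. Both your argument and the paper's rest on the same mechanism---the quadratic damping from $\underline{\veB}$ eventually dominates the linear growth from $\veL$---and both handle the lateral Dirichlet data at $z=\pm a$ in the same way. The difference is only in execution: you evaluate the $i_0$-th equation directly at the point where the maximal component attains its maximum and read off the contradiction from the resulting algebraic inequality $\underline{b}_{i_0,i_0}M_a^2\leq N\max|l_{i,j}|M_a$, whereas the paper first packages the reaction estimate as $\veL'\veu-\veB'\veu\circ\veu\leq r(\veo^\upT\veu)(K\veo-\veu)$, then observes that $v_i=\max(u_i,K)$ is a weak sub-solution of the scalar logistic problem $\widetilde{\caP}_i u=ru(K-u)$ with boundary value $K$, and finally compares $v_i$ with the constant super-solution $\widetilde{K}=\max_{\overline{\Sigma_a}}v_i$ via the strong maximum principle. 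Your route is shorter and avoids the auxiliary scalar problem; the paper's route has the advantage of recycling verbatim the a~priori estimate already established for the Cauchy problem, which explains its choice.
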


\begin{proof}
    The proof relies upon the exact same idea as the one in \cite[Proposition 2.1]{Girardin_2023} and is 
    actually an \textit{a priori} estimate on solutions of \eqref{sys:KPP_truncated_domain}.
    For the sake of completeness and to show precisely how boundary conditions are accounted for,
    we recall the proof.

    By assumptions \ref{ass:smooth_periodic} and \ref{ass:KPP}, there exist constants
    $r,K>0$ such that, for any $\veu\geq\vez$,
    \begin{equation*}
	\veL'\veu-\veB'\veu\circ\veu\leq r\left( \veo^\upT\veu \right)\left( K\veo-\veu \right).
    \end{equation*}
    By definition of $\underline{\veu}$, it satisfies $\underline{\veu}(t,y,z)\leq\upe^{\muwed z}\veo$
    globally in $\R\times\R^{n-1}\times\R$. Therefore, up to increasing $a^\star$ in an obvious way, for any $a\geq a^\star$, 
    $\underline{\veu}(t,y,-a)\leq K\veo$ in $[0,T]\times[0,L_y]$.

    To ease the reading we denote, for each $i\in[N]$,
    \begin{equation*}
	\widetilde{\caP}_i = (\cbR+\veL')_i = \partial_t-\nabla\cdot(A_i'\nabla)+q_i'\cdot\nabla
    \end{equation*}

    Solutions $\veu$ of \eqref{sys:KPP_truncated_domain} satisfy, for each $i\in[N]$,
    \begin{equation*}
	\widetilde{\caP}_i u_i \leq r(K-u_i)\sum_{j=1}^N u_j.
    \end{equation*}
    Whenever $u_i\geq K$,
    \begin{equation*}
	\widetilde{\caP}_i u_i \leq ru_i(K-u_i).
    \end{equation*}
    In particular, $v_i=\max(u_i,K)$ is a sub-solution of the periodic--Dirichlet semilinear scalar problem:
    \begin{equation}
	\begin{dcases}
	    \widetilde{\caP}_i u = ru(K-u) & \text{in }\Sigma_a, \\
	    u(t,y,\pm a) = K & \text{for any }(t,y)\in[0,T]\times[0,L_y], \\
	    (t,y)\mapsto[z\mapsto u(t,y,z)] & \text{is periodic}.
	\end{dcases}
	\label{eq:per_Dir_scalar_problem_uniform_pointwise_bound}
    \end{equation}

    Let $\widetilde{K}=\max_{\overline{\Sigma_a}}v_i$ and assume by contradiction $\widetilde{K}>K$. 
    The constant $\widetilde{K}$ is a super-solution of 
    \eqref{eq:per_Dir_scalar_problem_uniform_pointwise_bound} and, by definition, it satisfies $\widetilde{K}\geq v_i$.
    Also by definition, it satisfies $\widetilde{K}=v_i(t^\star,y^\star,z^\star)$ for certain 
    $(t^\star,y^\star,z^\star)\in\overline{\Sigma_a}$. In view of the boundary conditions, $z^\star\in(-a,a)$.
    By virtue of the scalar comparison principle \cite{Protter_Weinberger}, 
    $v_i=\widetilde{K}$ globally in $\overline{\Sigma_a}$. But this contradicts the boundary conditions
    satisfied by $v_i$. Hence $\widetilde{K}=K$.
    This ends the proof.
\end{proof}

Now, for all $a\geq a^\star$, we extend $\veu_a$ in $\R\times\R^{n-1}\times\R$ by setting $\veu_a=\vez$
outside $\R\times\R^{n-1}\times[-a,a]$. This extension is continuous and periodic with respect to $(t,y)$. It also satisfies
the estimate \eqref{eq:uniform_pointwise_bound}.

Thanks to this estimate, classical parabolic estimates \cite{Lieberman_2005} and a diagonal extraction process, 
we can extract a sequence $\left(a_k,\veu_{a_k}\right)_{k\in\N}$ such that $a_k\to+\infty$ and
$\veu_{a_k}$ converges in $\caC^{1,2}_{\upp}(\R\times\R^{n-1},\caC^2_{\upl}(\R,\R^N))$ to
some limit $\veu'$ satisfying $\underline{\veu}\vee\vez\leq\veu'\leq\overline{\veu}\wedge(K\veo)$
globally, where $\overline{\veu}\wedge(K\veo)$ denotes the component-by-component minimum of 
$\overline{\veu}$ and $K\veo$. For future reference, this is stated in the following corollary.

\begin{cor}
    There exists a solution $\veu'\in\caC^{1,2}_{\upp}(\R\times\R^{n-1},\caC^2(\R,\R^N))$ of 
    \begin{equation}
	\cbR\veu' = -(\veB'\veu')\circ\veu' \quad\text{in }\R\times\R^{n-1}\times\R.
	\label{sys:profile_system_full_domain}
    \end{equation}
    
    Furthermore, it satisfies
    \begin{equation}
	\underline{\veu}\vee\vez\leq\veu'\leq\overline{\veu}\wedge(K\veo)\quad\text{in }\R\times\R^{n-1}\times\R.
	\label{eq:profile_trapping_full_domain}
    \end{equation}
\end{cor}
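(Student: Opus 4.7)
The plan is to produce $\veu'$ as a locally uniform limit of the sequence $(\veu_a)_{a\geq a^\star}$ of truncated-domain solutions built in Proposition \ref{prop:existence_profile_in_truncated_domain}, in the limit $a\to+\infty$. By Proposition \ref{prop:existence_profile_in_truncated_domain} and Lemma \ref{lem:uniform_pointwise_bound}, each $\veu_a$ satisfies the uniform pointwise trapping $\underline{\veu}\vee\vez\leq\veu_a\leq\overline{\veu}\wedge(K\veo)$ inside $\Sigma_a$, with $K$ independent of $a$, and each $\veu_a$ is periodic in $(t,y)$.

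Next I would upgrade this uniform $\mathcal{L}^\infty$ control into uniform local regularity. Fix any compact set $C\subset\R\times\R^{n-1}\times\R$. For all sufficiently large $a$, $C$ lies at positive distance from $\partial\Sigma_a$, so on $C$ the function $\veu_a$ classically satisfies $\cbR\veu_a=-(\veB'\veu_a)\circ\veu_a$ with a right-hand side uniformly bounded in $\mathcal{L}^\infty$. Since the coefficients of $\cbR$ lie in the H\"{o}lder classes prescribed by \ref{ass:smooth_periodic}, interior parabolic Schauder estimates \cite{Lieberman_2005}, applied component by component, then yield uniform $\caC^{1+\delta/2,2+\delta}$ bounds on any slightly smaller compact. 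A diagonal extraction along an exhaustion of $\R\times\R^{n-1}\times\R$ by compacts produces a sequence $a_k\to+\infty$ such that $\veu_{a_k}$ converges in $\caC^{1,2}_{\upl}$ to some limit $\veu'$.

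Finally I would pass to the limit. Convergence in $\caC^{1,2}_{\upl}$ is sufficient to pass to the limit in the semilinear equation, so $\veu'$ solves \eqref{sys:profile_system_full_domain} on $\R\times\R^{n-1}\times\R$. Since each $\veu_{a_k}$ is $T$-periodic in $t$ and $L_y$-periodic in $y$, so is $\veu'$; together with the $\caC^2$ regularity in $z$ inherited from the interior Schauder bounds, this yields the stated regularity $\caC^{1,2}_{\upp}(\R\times\R^{n-1},\caC^2(\R,\R^N))$. The pointwise bounds are also stable under locally uniform convergence, which gives \eqref{eq:profile_trapping_full_domain}. The only genuine technical point is the uniformity of the Schauder constants in $a$, which is guaranteed precisely because any fixed compact $C$ eventually sits strictly inside $\Sigma_a$, so that the Dirichlet data at $z=\pm a$ play no role in the interior estimate; no further use of the super- and sub-solutions or of a comparison principle is required at this stage.
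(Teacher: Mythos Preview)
Your argument is correct and follows essentially the same route as the paper: uniform $\mathcal{L}^\infty$ bounds from Lemma \ref{lem:uniform_pointwise_bound}, interior parabolic estimates \cite{Lieberman_2005}, a diagonal extraction along $a\to+\infty$, and passage to the limit. The paper's version is terser and first extends $\veu_a$ by $\vez$ outside $[-a,a]$, but this is only a notational convenience and your formulation in terms of compacts eventually contained in $\Sigma_a$ is equivalent.
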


\subsection{Upstream positivity}
In this subsection, we prove the asymptotic uniform positivity of the profile, \revision{the limit} 
\eqref{eq:definition_GTW_limit_positive_infinity} \revision{in Definition \ref{defi:GTW}}. 
\revision{In view of Proposition \ref{prop:transformed_WPPTW_problem},}
this will end the proof of Theorem \ref{thm:supercritical_existence}.

\begin{prop}
    Necessarily,
    \begin{equation*}
	\liminf_{z\to+\infty}\min_{(t,y)\in[0,T]\times[0,L_y]}\min_{i\in[N]}u_i'(t,y,z)>0.
    \end{equation*}
\end{prop}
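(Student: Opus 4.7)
The plan is to argue by contradiction, first reducing the claim to $\tilde\veu_\infty\equiv\vez$ for a translate-limit of $\veu'$ via Harnack and the strong maximum principle for cooperative parabolic systems, then contradicting this vanishing by transporting persistence from the original frame through the pulsating periodicity of $\veu'$. Suppose the liminf vanishes, and extract $(t_k,y_k,z_k)\in[0,T]\times[0,L_y]\times\R$ with $z_k\to+\infty$ and $\min_i u_i'(t_k,y_k,z_k)\to 0$. Writing $z_k=n_k L_z+\zeta_k$ with $n_k\in\N$ and $\zeta_k\in[0,L_z)$, the $L_z$-periodicity in $z$ of the coefficients of $\cbR$ and $\veB'$ makes the translates $\tilde\veu_k(t,y,z):=\veu'(t,y,z+n_k L_z)$ bounded solutions of the same semilinear system (bounded by $K\veo$ via Lemma~\ref{lem:uniform_pointwise_bound}), and parabolic regularity together with diagonal extraction produces a subsequential limit $\tilde\veu_\infty$ in $\caC^{1,2}_\upl$, a nonnegative bounded entire solution of the same (possibly $z$-translated by a bounded amount) system whose limit point has a vanishing component. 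Viewing $\tilde\veu_\infty$ as a solution of the linear cooperative equation $\cbR\vev+\diag(\veB'\tilde\veu_\infty)\vev=\vez$, whose off-diagonal data is essentially nonnegative by \ref{ass:cooperative}--\ref{ass:irreducible}, iterating Proposition~\ref{prop:harnack_inequality} (or a F\"{o}ldes--Pol\'{a}\v{c}ik-type strong maximum principle) forces $\tilde\veu_\infty\equiv\vez$, so $\sup_K\tilde\veu_k\to\vez$ on every compact $K\subset\R\times\R^{n-1}\times\R$.

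To contradict this, I would return to the original frame. The function $\veu(t,x):=\veu'(t,P^\upT(x+cte))$ is a nonnegative entire solution of \eqref{sys:KPP} whose initial datum $\veu(0,\cdot)$ is nontrivial by Lemma~\ref{lem:subsolution_in_truncated_domain}. Following verbatim the reduction in the proof of Theorem~\ref{thm:nonexistence}, the nonlinear comparison \cite[Corollary~2.7]{Girardin_2023} makes $\veu$ a supersolution of a cooperative Cauchy problem of the form $\cbQ\vew=-D\vew^{\circ(1+p)}$; the hair-trigger / spreading result for cooperative KPP systems in space-time periodic media (\cite[Theorem~2.1]{Du_Li_Shen_2022} and references therein) applied to this Cauchy problem started from a nontrivial compactly supported datum $\leq\veu(0,\cdot)$ then produces $\delta>0$ and $T_0>0$ with $\veu(t,x)\geq\delta\veo$ for all $t\geq T_0$ and $|x|\leq R$, for any prescribed $R$. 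The $T$-periodicity of $\veu'$ in $t$ is equivalent, in the original coordinates, to the pulsating identity $\veu(t+T,x)=\veu(t,x+cTe)$, whose iteration yields
\[
\veu'(t^*,y^*,z^*)=\veu\bigl(t^*+mT,\;y^*_1 e'_1+\cdots+y^*_{n-1}e'_{n-1}+(z^*-ct^*-mcT)e\bigr)\quad\text{for all }m\in\Z.
\]
For $R\geq\max(L_y\sqrt{n-1},|c|T/2)$ and $z^*$ large, the set $\{m\in\R:(z^*-ct^*-mcT)^2\leq R^2-|y^*|^2\}$ has length at least $1$ and hence contains some integer $m$ for which, upon choosing the sign of the iteration according to $\mathrm{sign}(c)$, one also has $t^*+mT\geq T_0$. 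The persistence bound then yields $\veu'(t^*,y^*,z^*)\geq\delta\veo$ uniformly in $(t^*,y^*)\in[0,T]\times[0,L_y]$ for $z^*$ large, contradicting $\tilde\veu_\infty\equiv\vez$.

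The main obstacle will be the degenerate case $c=0$, in which the pulsating identity collapses to ordinary $T$-periodicity of $\veu$ in $t$ and the transport above cannot by itself reach $x\cdot e\to+\infty$. In that regime one must supplement the argument with the directional spreading of $\veu$ in the direction opposite to $e$---available since $0=c>c_e^\star$ forces appropriate sign information on the directional spreading speeds via Lemma~\ref{lem:minimal_c}---and glue it with $T$-periodicity; verifying that these ingredients combine cleanly to cover $x\cdot e\to+\infty$ uniformly is the most delicate technical point.
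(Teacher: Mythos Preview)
Your opening reduction---extracting translates with a vanishing limit via Harnack---matches the paper's first move. After that the arguments diverge entirely. The paper does not appeal to any Cauchy-problem persistence or spreading. Instead it \emph{normalizes before passing to the limit}, defining $\vev_k=\veu'(\cdot+\text{shifts})/u'_{i_\infty}(\text{point})$, so that the limit $\vev$ is a \emph{positive} entire solution of the \emph{linear} equation $\cbR\vev=\vez$. It then studies the translation ratio $w_i(t,y,z)=v_i(t,y,z+L_z)/v_i(t,y,z)$: the Harnack inequality bounds $\vew$ globally, and a strong-maximum-principle argument on translated limits shows that both $\overline{\mu}=\frac{1}{L_z}\ln\sup\vew$ and $\underline{\mu}=\frac{1}{L_z}\ln\inf\vew$ must satisfy $\lambda_{1,\mu}+c\mu=0$, hence lie in $\{\muwed,\muvee\}$. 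Transferring the bound $\vew\geq\upe^{\muwed L_z}\veo$ back to $\veu'$ along the subsequence yields exponential growth in $z$, contradicting $\veu'(\cdot,\cdot,\cdot+\zeta_k)\to\vez$.

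Your route has a genuine gap, and it is not confined to $c=0$. The hair-trigger assertion ``$\veu(t,x)\geq\delta\veo$ for all $t\geq T_0$ and $|x|\leq R$'' is false in general: under advection, $\lambda_{1,\upp}<0$ does \emph{not} guarantee local persistence from compactly supported data (for the scalar equation $\partial_t u=\partial_{xx}u-q\partial_x u+u(1-u)$ with $q>2$ one has $\lambda_{1,\upp}=-1$, yet the spreading window $((q-2)t,(q+2)t)$ eventually excludes the origin and $u(t,0)\to0$). This obstruction is present regardless of the sign of $c$; and separately, for $c<0$ your pulsating iteration forces $t^*+mT\to-\infty$, so ``choosing the sign of the iteration according to $\mathrm{sign}(c)$'' does not rescue $t^*+mT\geq T_0$. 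The idea can be repaired by replacing the fixed ball with the genuine two-sided spreading window of the cooperative subsolution, invoking $c^\star_e+c^\star_{-e}>0$ (a consequence of $\lambda_{1,\upp}<0$ and strict concavity of $\mu\mapsto\lambda_{1,\mu e}$), and choosing $s=t^*+mT$ so that $z^*-cs$ lands in that moving window---but this is substantially more than what you sketched.
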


\begin{proof}
    Assume by contradiction
    \begin{equation}\label{eq:upstream_contradiction}
	\liminf_{z\to+\infty}\min_{(t,y)\in[0,T]\times[0,L_y]}\min_{i\in[N]}u_i'(t,y,z)=0.
    \end{equation}

    Let $(i_k,t_k,y_k)_{k\in\N}\in([N]\times[0,T]\times[0,L_y])^\N$ and $(z_k)_{k\in\N}$ such that $z_k\to+\infty$
    and $u_{i_k}'(t_k,y_k,z_k)\to 0$ as $k\to+\infty$. Up to extraction, 
    $(i_k,t_k,y_k)\to(i_\infty,t_\infty,y_\infty)\in[N]\times[0,T]\times[0,L_y]$.
    Moreover, let $Z_k\in[0,L_z]$ such that, for each $k\in\N$, $\zeta_k=z_k-Z_k\in L_z\mathbb{Z}$.
    Up to another extraction, $(\zeta_k)_{k\in\N}$ is increasing and $Z_k\to Z_\infty\in[0,L_z]$. It follows by continuity that 
    \begin{equation*}
	u_{i_\infty}'(t_\infty,y_\infty,\zeta_k+Z_\infty)\to 0\quad\text{as }k\to+\infty.
    \end{equation*}
    Applying the Harnack inequality of Proposition \ref{prop:harnack_inequality} to 
    $(t,y,z)\mapsto\veu'(t+t_\infty,y+y_\infty,z+\zeta_k+Z_\infty)$
    and to the translated operator, denoted for simplicity $\cbR(t+t_\infty,y+y_\infty,z+Z_\infty)+\diag(\veB'\veu')(t+t_\infty,y+y_\infty,z+Z_\infty)$,
    we easily deduce that
    \begin{equation*}
	\max_{i\in[N]}u_i'(t,y,z+\zeta_k)\to 0
    \end{equation*}
    uniformly with respect to $(t,y,z)\in[0,T]\times[0,L_y]\times[0,L_z]$. 

    In general this shows that limit points as $z\to+\infty$ are either $\vez$ or in $(\vez,K\veo)$.
    By \eqref{eq:upstream_contradiction}, here it is $\vez$. Thanks to this convergence, 
    we are now going to replace the semilinear system by a linearized one.

    For any $i\in[N]$ and any $k\in\N$, the function 
    \begin{equation*}
	\vev_k:(t,y,z)\mapsto\frac{1}{u_i'(t_\infty,y_\infty,\zeta_k+Z_\infty)}\veu'(t+t_\infty,y+y_\infty,z+\zeta_k+Z_\infty)
    \end{equation*}
    satisfies, for all $(t,y,z)\in\R\times\R^{n-1}\times\R$ and $k\in\N$,
    \begin{equation*}
	\cbR(t+t_\infty,y+y_\infty,z+z_\infty)\vev_k(t,y,z) = -(\veB'\veu')(t+t_\infty,y+y_\infty,z+\zeta_k+Z_\infty)\circ\vev_k(t,y,z),
    \end{equation*}
    or else
    \begin{equation*}
	\cbR(t,y,z)\vev_k(t-t_\infty,y-y_\infty,z-Z_\infty) = -(\veB'\veu')(t,y,z)\circ\vev_k(t-t_\infty,y-y_\infty,z-Z_\infty),
    \end{equation*}
    as well as $v_{k,i}(0,0,0)=1$.
    By classical parabolic estimates \cite{Lieberman_2005} and a diagonal extraction process, the sequence 
    $(\vev_k(\cdot-t_\infty,\cdot-y_\infty,z-Z_\infty))_{k\in\N}$
    converges, up to extraction, locally uniformly, to a nonnegative nonzero solution 
    $\vev\in\caC^{1,2}_{\upp}(\R\times\R^{n-1},\caC^2(\R,\R^N))$ of
    \begin{equation*}
	\cbR\vev=\vez.
    \end{equation*}

    In other words, $\vev$ is a generalized principal eigenfunction for the generalized principal eigenvalue $0$
    of the operator $\cbR$ \cite{Girardin_Mazari_2022}. 
    By the Harnack inequality of Proposition \ref{prop:harnack_inequality}, it is positive.

    Now we are going to adapt the proof of \cite[Proposition 3.9]{Girardin_Mazari_2022} to prove an exponential estimate, that will lead to a contradiction.

    Define the translation $\tau:z\in\R\mapsto z+L_z$ and denote $\vev^\tau:(t,y,z)\mapsto\vev(t,y,\tau(z))$ and 
    $\vew=\left(v^\tau_i/v_i\right)_{i\in[N]}$. By virtue of the Harnack inequality of Proposition \ref{prop:harnack_inequality} and
    periodicity of the coefficients of $\cbR$, $\vew$ is globally bounded.
    Let 
    \[
	\overline{\mu}=\frac{1}{L_z}\ln\left(\max_{i\in[N]}\sup_{(t,y,z)\in\R\times\R^{n-1}\times\R}w_i(t,y,z)\right).
    \]
    Recalling that $\vev$ and consequently $\vew$ are periodic with respect to all variables except maybe $z$,
    there exists $\overline{i}\in[N]$ and a new sequence still denoted 
    $\left(t_k,y_k,z_k\right)_{k\in\N}\in([0,T]\times[0,L_y]\times\R)^\N$ such that
    $w_{\overline{i}}(t_k,y_k,z_k)\to\upe^{\overline{\mu} L_z}$ as $k\to+\infty$. 
    Moreover, there exists another sequence still denoted $\left(Z_k\right)_{k\in\N}\in[0,L_z]^{\N}$
    such that, for all $k\in\N$, $z_k-Z_k\in L_z\mathbb{Z}$. Up to extraction,
    we assume that $(t_k,y_k,Z_k)\to (t_\infty,y_\infty,Z_\infty)\in[0,T]\times[0,L_y]\times[0,L_z]$.
    
    Now, define, for all $k\in\N$,
    \[
        \hat{\vev}_k:(t,y,z)\mapsto\frac{1}{v_{\overline{i}}(t_k,y_k,z_k)}\vev(t+t_k,y+y_k,z+z_k),
    \]
    \[
        \hat{\vev}^\tau_k:(t,y,z)\mapsto\hat{\vev}_k(t,y,\tau(z)).
    \]
    
    Once more by virtue of the Harnack inequality and
    the periodicity of the coefficients of $\cbR$, $\left(\hat{\vev}_k\right)_{k\in\N}$ is globally bounded.
    By periodicity of the coefficients of $\cbR$, it satisfies:
    \[
        \cbR(t+t_k,y+y_k,z+Z_k)\hat{\vev}_k(t,y,z)=\vez\quad\text{for all }(t,y,z)\in\R\times\R^{n-1}\times\R,\ k\in\N.
    \]
    Therefore, by classical regularity estimates \cite{Lieberman_2005}, $\left(\hat{\vev}_k\right)_{k\in\N}$
    converges up to a diagonal extraction to $\hat{\vev}_\infty\in\caC^{1,2}_{\upp}(\R\times\R^{n-1},\caC^2(\R,\R^N))$
    which satisfies:
    \[
        \cbR(t+t_\infty,y+y_\infty,z+Z_\infty)\hat{\vev}_\infty(t,y,z)=\vez\quad\text{for all }(t,y,z)\in\R\times\R^{n-1}\times\R,
    \]
    or else:
    \[
        \cbR(t,y,z)\hat{\vev}_\infty(t-t_\infty,y-y_\infty,z-Z_\infty)=\vez\quad\text{for all }(t,y,z)\in\R\times\R^{n-1}\times\R.
    \]
    Moreover, $\hat{v}_{\overline{i},\infty}(0,0,0)=1$, whence $\hat{\vev}_\infty$ is nonzero.
    By the strong maximum principle, it is in fact positive.
    
    We can now define $\vef_\infty=\upe^{\overline{\mu} L_z}\hat{\vev}_\infty -\hat{\vev}_\infty^\tau$.
    We deduce by definition of $\overline{\mu}$ that $\vef_\infty\geq\vez$ with $f_{\overline{i},\infty}(0,0,0)=0$. 
    Moreover, $\vef_\infty$ satisfies the same equation as $\hat{\vev}_\infty$. Therefore,
    by virtue of the strong maximum principle, $\vef_\infty$ is the zero function.
    This exactly means that $\upe^{\overline{\mu} L_z}\hat{\vev}_\infty=\hat{\vev}_\infty^\tau$.
    
    It is now clear that $\vev^1:(t,y,z)\mapsto\hat{\vev}_\infty(t-t_\infty,y-y_\infty,z-Z_\infty)$ is positive, 
    periodic with respect to $t$ and the $n-1$ first spatial variables, is a solution of $\cbR\vev^1=\vez$, and that the function
    $(t,y,z)\mapsto\upe^{-\overline{\mu} z}\vev^1(t,y,z)$ is $L_z$-periodic with respect to $z$. 

    This implies that $0=\lambda_{1,\upp}(\cbR_{\overline{\mu}})$. Applying \eqref{eq:principal_eigenfunction_new_coordinates},
    it follows that $0=\lambda_{1,\overline{\mu} e}+c\overline{\mu}$. 
    The function $\nu\in\R\mapsto\lambda_{1,\nu e}+c\nu$ is strictly concave and coercive (\textit{cf.} \cite[Lemma 3.5]{Girardin_2023}),
    and thus $\muwed$ and $\muvee$ given by Lemma \ref{lem:minimal_c} are its only two zeros.
    Hence $\overline{\mu}\in\{\muwed,\muvee\}$.

    Repeating the same process with $\overline{\mu}$ replaced by
    \begin{equation*}
	\underline{\mu}=\frac{1}{L_z}\ln\left(\min_{i\in[N]}\inf_{(t,y,z)\in\R\times\R^{n-1}\times\R}w_i(t,y,z)\right),
    \end{equation*}
    we end up similarly with $\underline{\mu}\in\{\muwed,\muvee\}$.

    Hence
    \begin{equation*}
	\upe^{\muwed L_z}\veo\leq\vew\leq\upe^{\muvee L_z}\veo\quad\text{in }\R\times\R^{n-1}\times\R.
    \end{equation*}

    By definition of $\vew$ and $\vev^\tau$, for all $(t,y,z)\in\R\times\R^{n-1}\times\R$,
    \begin{equation*}
	\upe^{\muwed L_z}\vev(t,y,z)\leq\vev(t,y,z+L_z)\leq\upe^{\muvee L_z}\vev(t,y,z).
    \end{equation*}

    By definition of $\vev$ and by its positivity and global boundedness, there exists $k_0\in\N$ such that,
    for all $(t,y,z)\in[0,T]\times[0,L_y]\times[0,L_z]$ and all $k\in\N$,
    \begin{equation*}
	\upe^{\frac{\muwed}{2} L_z}\veu'(t,y,z+\zeta_{k+k_0})\leq\veu'(t,y,z+\zeta_{k+k_0}+L_z).
    \end{equation*}
    Recall that, by construction, for all $k\in\N$, $\zeta_k\in L_z\Z$ and $\zeta_k<\zeta_{k+1}$.
    By iteration,
    for all $(t,y,z)\in[0,T]\times[0,L_y]\times[0,L_z]$ and all $k\in\N$,
    \begin{equation*}
	\upe^{(\zeta_{k+k_0+1}-\zeta_{k+k_0})\frac{\muwed}{2} L_z}\veu'(t,y,z+\zeta_{k+k_0})\leq\veu'(t,y,z+\zeta_{k+k_0+1}).
    \end{equation*}
    Subsequently, after another iteration, for all $(t,y,z)\in[0,T]\times[0,L_y]\times[0,L_z]$ and all $k\in\N$,
    \begin{equation}\label{eq:upstream_contradiction_exponential_estimate}
	\upe^{(\zeta_{k+k_0}-\zeta_{k_0})\frac{\muwed}{2} L_z}\veu'(t,y,z+\zeta_{k_0})\leq\veu'(t,y,z+\zeta_{k+k_0}).
    \end{equation}

    However the exponential estimate \eqref{eq:upstream_contradiction_exponential_estimate}, with $\muwed>0$ (\textit{cf.} Lemma \ref{lem:minimal_c}), obviously contradicts the convergence $\veu'(t,y,z+\zeta_k)\to\vez$ as $k\to+\infty$. Thus \eqref{eq:upstream_contradiction} is contradicted and, consequently, the limit \eqref{eq:definition_GTW_limit_positive_infinity} is established.
\end{proof}

\section{Proof of Theorem \ref{thm:critical_existence}}

In this section, we assume again $\lambda_{1,\upp}<0$, we fix $e\in\Q^n\cap\Sn$, we assume $c_e^\star\in\Q$, and
we prove the existence of a pulsating traveling wave. Again, we omit the subscripts $e$ and $c^\star_e$ in all forthcoming notations
and we use the notations $T$ and $L$ for the time and space periods of the coefficients of the rotated operator $\cbR$.

The proof is quite similar to that of Theorem \ref{thm:supercritical_existence}. In fact, except minor obvious adaptations,
the only substantial difference is in the construction
of $\overline{\veu}$ and $\underline{\veu}$ before proving Proposition \ref{prop:existence_profile_in_truncated_domain}.
For the sake of brevity, below, we only detail these two new constructions in the truncated cylinder
$\Sigma_a = (0,T)\times(0,L_y)\times(-a,a)$. 

The constructions will use repeatedly the mapping
\begin{equation*}
    \vect{\Theta}:\mu\in(0,+\infty)\mapsto\left[(t,y,z)\mapsto\upe^{\mu z}\veu_{\mu}'(t,y,z)\right]. 
\end{equation*}
By the Krein--Rutman theorem,
the principal eigenvalue $\lambda_{1,\mu}=\lambda_{1,\upp}(\cbQ_{\mu})$ is always simple, and by the Kato--Rellich theorem, it depends smoothly
on $\mu$. It follows that $\vect{\Theta}$ is of class $\caC^1$. Below, we denote its derivative
$\dot{\vect{\Theta}}$.

We also need two new notations to circumvent possible oscillations of the functions at hand.
If $\vev$ is a function of the variable $z\in\R$ of class $\caC^1$ such that, for each $i\in[N]$,
there exists $z_i\in\R$ such that $v_i(z)>0$ in $(-\infty,z_i)$, $v_i(z_i)=0$ and $v_i'(z_i)<0$, then $\vev\vee_{-\infty}\vez$
denotes the continuous, piecewise $\caC^1$, function whose $i$-th component equals $v_i$ in $(-\infty,z_i)$ and $0$ in $[z_i,+\infty)$.
Consistently, $\vev\wedge_{-\infty}\veo$ denotes the function $\veo - (\veo-\vev)\vee_{-\infty}\vez$.

\begin{lem}
    Let $M_1>0$, $M_2>0$ and
    \begin{equation}
	\overline{\veu}=M_2\left(\left(-M_1\dot{\vect{\Theta}}(\mu^\star)\right)\wedge_{-\infty}\veo\right).
	\label{defi:critical_supersolution}
    \end{equation}

    If $M_1$ is large enough, then $\overline{\veu}\geq\vez$ in $\R\times\R^{n-1}\times\R$.

    Furthermore, if $M_2$ is large enough, then $\overline{\veu}$ satisfies:
    \begin{equation}
	\cbR\overline{\veu}\geq -\diag(b'_{i,i})\overline{\veu}^{\circ 2}\quad\text{in }\Sigma_a
	\label{eq:critical_supersolution_in_truncated_domain}
    \end{equation}
    where partial derivatives are understood in distributional sense where necessary.
\end{lem}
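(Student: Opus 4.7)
The plan is to exploit the double criticality at $\mu^\star$: by Lemma \ref{lem:minimal_c} and the first-order optimality of the minimum defining $c^\star_e$, we have both $\lambda_{1,\mu^\star}+c^\star_e\mu^\star=0$ and $\dot{\lambda}_{1,\mu^\star}+c^\star_e=0$. Differentiating in $\mu$ the eigenvalue identity $\cbR\vect{\Theta}(\mu)=(\lambda_{1,\mu}+c^\star_e\mu)\vect{\Theta}(\mu)$ at $\mu^\star$, both terms on the right-hand side vanish and we obtain
\[
    \cbR\dot{\vect{\Theta}}(\mu^\star)=\vez\quad\text{identically in }\R\times\R^{n-1}\times\R.
\]
This is the algebraic heart of the construction; at the critical speed, $-M_1\dot{\vect{\Theta}}(\mu^\star)$ plays the role that $\upe^{\muwed z}\veu_{\muwed}'$ played in the supercritical case of Lemma \ref{lem:supersolution_in_truncated_domain}.

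Next I would analyze the $z$-profile of $-\dot{\vect{\Theta}}(\mu^\star)$. Differentiating $\vect{\Theta}(\mu)=\upe^{\mu z}\veu_\mu'$ in $\mu$ gives
\[
    -\dot{\vect{\Theta}}(\mu^\star)(t,y,z)=-z\upe^{\mu^\star z}\veu_{\mu^\star}'(t,y,z)-\upe^{\mu^\star z}\dot{\veu}_{\mu^\star}'(t,y,z).
\]
By the Harnack inequality, $\veu_{\mu^\star}'$ is uniformly bounded below by a positive constant, and $\dot{\veu}_{\mu^\star}'$ is bounded by smooth dependence of the principal eigenfunction on $\mu$ combined with periodicity. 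Hence the first term dominates for $|z|$ large: as $z\to-\infty$, $-\dot{\vect{\Theta}}(\mu^\star)$ tends to $\vez$ through strictly positive values, while as $z\to+\infty$ each component is driven to $-\infty$. The positive hump on the left has height of order $\upe^{-1}/\mu^\star$ times the Harnack bound on $\veu_{\mu^\star}'$, so for $M_1$ large the hump of $-M_1\dot{\vect{\Theta}}(\mu^\star)$ exceeds $\veo$ while still on its strictly increasing branch. Denoting $z_i(t,y)$ the first $z$ at which $-M_1\dot{\Theta}_i(\mu^\star)(t,y,\cdot)$ reaches $1$, the crossing is transverse with positive slope, the wedge with $\veo$ is well-defined pointwise in $(t,y)$, and $\overline{\veu}$ equals $-M_1M_2\dot{\vect{\Theta}}(\mu^\star)\in[\vez,M_2\veo]$ on $\{z<z_i(t,y)\}$ and $M_2\veo$ on $\{z\geq z_i(t,y)\}$, yielding nonnegativity.

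For the differential inequality I would split along the interface. On $\{z<z_i(t,y)\}$, $\cbR\overline{\veu}=-M_1M_2\cbR\dot{\vect{\Theta}}(\mu^\star)=\vez\geq-\diag(b'_{i,i})\overline{\veu}^{\circ 2}$ holds trivially. On $\{z>z_i(t,y)\}$, $\overline{\veu}=M_2\veo$ is constant, so $\cbR\overline{\veu}=-M_2\veL'\veo$ and the inequality reduces componentwise to $M_2\,b'_{i,i}\geq\sum_j l'_{i,j}$, which holds uniformly for $M_2$ large by the uniform positivity of the diagonal of $\veB'$ from \ref{ass:KPP} and the $\mathcal{L}^\infty$ bound on $\veL'$ from \ref{ass:smooth_periodic}. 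At the interface $\{z=z_i(t,y)\}$, $\overline{u}_i$ is continuous but $\partial_z\overline{u}_i$ jumps downward from a strictly positive value to $0$; therefore $-\partial_{zz}\overline{u}_i$ acquires a nonnegative Dirac measure along the interface, which can only strengthen the inequality in the distributional sense.

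The hardest step is the wedge construction itself: showing that, for $M_1$ large, the first upward crossing of $1$ by $-M_1\dot{\Theta}_i(\mu^\star)(t,y,\cdot)$ occurs strictly before any zero of this function, uniformly in $(t,y)\in[0,T]\times[0,L_y]$ and $i\in[N]$. This boils down to a quantitative lower bound on the positive hump of $-z\upe^{\mu^\star z}$ on $(-\infty,0)$ — whose explicit maximum near $z=-1/\mu^\star$, together with the uniform Harnack bound on $\veu_{\mu^\star}'$, gives a positive height independent of $(t,y)$ and $i$ — which absorbs the bounded perturbation $-\upe^{\mu^\star z}\dot{\veu}_{\mu^\star}'$ once $M_1$ is large, with transversality at the crossing guaranteed by strict monotonicity on the ascending branch.
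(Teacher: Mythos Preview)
Your proof is correct and follows the same strategy as the paper: differentiate \eqref{eq:principal_eigenfunction_new_coordinates} in $\mu$, use the double criticality at $\mu^\star$ to get $\cbR\dot{\vect{\Theta}}(\mu^\star)=\vez$, verify the inequality separately on the two pieces $-M_1M_2\dot{\vect{\Theta}}(\mu^\star)$ and $M_2\veo$, and conclude by taking the wedge. You give somewhat more detail than the paper on the well-definedness of the wedge for large $M_1$ and on the distributional sign of the singular part at the interface; both points are handled correctly (note only that the interface $z_i(t,y)$ varies in $(t,y)$, so the nonnegative Dirac comes from the full divergence-form operator $-\nabla\cdot(A_i'\nabla)$ via ellipticity, not just from $-\partial_{zz}$, but the conclusion is unchanged).
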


\begin{proof}
    On one hand, differentiating the identity \eqref{eq:principal_eigenfunction_new_coordinates} with respect to $\mu$, we find directly
    \begin{equation}\label{eq:differentiation_eigen_equality_wrt_mu}
	\cbR\dot{\vect{\Theta}}(\mu) = \left( \frac{\partial\lambda_{1,\mu}}{\partial\mu} +c^\star \right)\vect{\Theta}(\mu) + \left( \lambda_{1,\mu}+c^\star\mu \right)\dot{\vect{\Theta}}(\mu).
    \end{equation}

    Note that the derivative of $\mu\in(0,+\infty)\mapsto\frac{-\lambda_{1,\mu}}{\mu}$ is 
    $\mu\in(0,+\infty)\mapsto\frac{-1}{\mu}\frac{\partial\lambda_{1,\mu}}{\partial\mu}+\frac{\lambda_{1,\mu}}{\mu^2}$.
    In particular, evaluating the derivative at $\mu=\mu^\star>0$ where the global strict minimum 
    $\frac{\lambda_{1,\mu^\star}}{\mu^\star}=c^\star$ is achieved, we deduce:
    \begin{equation}
        c^\star = -\left(\frac{\partial\lambda_{1,\mu}}{\partial\mu}\right)_{|\mu=\mu^\star}.
        \label{eq:zero_derivative_at_cstar}
    \end{equation}

    Therefore, evaluating \eqref{eq:differentiation_eigen_equality_wrt_mu} at $\mu^\star$ and using the linearity,
    \begin{equation*}
	\cbR(-M_1\dot{\vect{\Theta}}(\mu^\star)) = \vez \geq -\diag(b'_{i,i})\left( -M_1\dot{\vect{\Theta}}(\mu^\star) \right)^{\circ 2}.
    \end{equation*}

    On the other hand, if $M_2$ is large enough, then $-\veL'\veo + M_2\diag(b'_{i,i})\veo\geq\vez$, whence
    $\cbR(M_2\veo)\geq -\diag(b'_{i,i})(M_2\veo)^{\circ 2}$.

    As a matter of fact, for any $(t,y,z)\in\R\times\R^{n-1}\times\R$
    \begin{equation}
	\dot{\vect{\Theta}}(\mu^\star)(t,y,z) = \upe^{\mu^\star z}\left(z\veu'_{\mu^\star}(t,y,z)+\left(\frac{\partial\veu'_{\mu}}{\partial\mu}\right)_{|\mu=\mu^\star}(t,y,z)\right),
	\label{eq:expansion_derivative_mu_supersolution}
    \end{equation}
    so that $-M_1\dot{\vect{\Theta}}(\mu^\star)$ is positive in a neighborhood of $z=-\infty$. Increasing appropriately $M_1$,
    it follows that $(-M_1\dot{\vect{\Theta}}(\mu^\star))\wedge_{-\infty}\veo$ is well-defined and positive in $\R\times\R^{n-1}\times\R$.

    Therefore $M_2((-M_1\dot{\vect{\Theta}}(\mu^\star))\wedge_{-\infty}\veo)$
    is also positive, and finally it satisfies indeed \eqref{eq:critical_supersolution_in_truncated_domain} in distributional sense.
\end{proof}

Below, the two constants $M_1$ and $M_2$ are fixed accordingly.

\begin{lem}
    Let $M_3>0$, $\gamma=\frac{\mu^\star}{2}$ and
    \begin{equation}
	\underline{\veu}= M_1 M_2\left(\left(-\dot{\vect{\Theta}}(\mu^\star)-M_3\vect{\Theta}(\mu^\star)+\vect{\Theta}(\mu^\star+\gamma)\right)\vee_{-\infty}\vez\right).
	\label{defi:critical_subsolution}
    \end{equation}

    If $M_3$ is large enough, then $\underline{\veu}$ satisfies:
    \begin{equation}\label{eq:ordered_critical_subsolution_supersolution_in_truncated_domain}
	\underline{\veu}\leq\overline{\veu}\quad\text{in }\R\times\R^{n-1}\times\R,
    \end{equation}
    \begin{equation}
	\cbR\underline{\veu}\leq -\underline{\veu}\circ(\veB'\overline{\veu})\quad\text{in }\Sigma_a
	\label{eq:critical_subsolution_in_truncated_domain}
    \end{equation}
    where partial derivatives are understood in distributional sense where necessary.
\end{lem}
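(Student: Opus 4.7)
My plan is to carry over the strategy of Lemma \ref{lem:subsolution_in_truncated_domain}, with the important difference that at $c = c^\star$ the two exponents $\muwed$ and $\muvee$ of Lemma \ref{lem:minimal_c} merge into $\mu^\star$, so one must use $\dot{\vect{\Theta}}(\mu^\star)$ as an extra linearly independent ``eigenmode''. The key algebraic identities are $\cbR\vect{\Theta}(\mu^\star) = \vez$ (evaluate \eqref{eq:principal_eigenfunction_new_coordinates} at $\mu = \mu^\star$), $\cbR\dot{\vect{\Theta}}(\mu^\star) = \vez$ (combine \eqref{eq:differentiation_eigen_equality_wrt_mu} with \eqref{eq:zero_derivative_at_cstar}), and $\cbR\vect{\Theta}(\mu^\star + \gamma) = -\beta\vect{\Theta}(\mu^\star + \gamma)$ with $\beta := -(\lambda_{1,\mu^\star + \gamma} + c^\star(\mu^\star + \gamma)) > 0$; the positivity of $\beta$ follows from the strict concavity and coercivity of $\mu \mapsto \lambda_{1,\mu} + c^\star\mu$, whose unique zero is $\mu^\star$. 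Consequently, on the open support of $\underline{\veu}$, we get the clean identity $\cbR\underline{\veu} = -M_1M_2\beta\vect{\Theta}(\mu^\star + \gamma)$.

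I would first check that the $\vee_{-\infty}\vez$ operator is actually applicable. Expanding via \eqref{eq:expansion_derivative_mu_supersolution}, the $i$-th component of the argument of $\underline{\veu}$ reads $\upe^{\mu^\star z}\bigl((-z - M_3)u'_{\mu^\star,i} + r_i + \upe^{\gamma z}u'_{\mu^\star + \gamma,i}\bigr)$ with $r_i$ globally bounded. For $z \to -\infty$, the linear factor $-z - M_3$ dominates and is positive; for $M_3$ large, each component crosses zero at a unique first $z_i(t,y) \approx -M_3$, with derivative there dominated by $-\upe^{\mu^\star z_i}\kappa_{\mu^\star} < 0$. Hence $\underline{\veu}$ is well-defined, piecewise $\caC^1$, and supported in $(-\infty, z_i(t,y)]$ for each $(i,t,y)$.

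For the ordering \eqref{eq:ordered_critical_subsolution_supersolution_in_truncated_domain}: outside the support it reduces to the already-known $\overline{\veu} \geq \vez$; inside, the $\dot{\vect{\Theta}}(\mu^\star)$ contributions cancel so that the inequality becomes $M_3 u'_{\mu^\star,i} \geq \upe^{\gamma z}u'_{\mu^\star + \gamma, i}$, which for $M_3$ large holds componentwise thanks to $z \leq z_i \lesssim -M_3$, $\upe^{\gamma z}$ exponentially small, and the Harnack bounds $\kappa_{\mu^\star}\veo \leq \veu'_{\mu^\star}$, $\veu'_{\mu^\star + \gamma} \leq \veo$. A brief comparison shows that the truncation region of $\overline{\veu}$ (the first crossing of $-M_1\dot{\vect{\Theta}}(\mu^\star)$ with $\veo$, located at $z \sim -\ln M_1/\mu^\star$) sits strictly to the right of the support of $\underline{\veu}$ for $M_3$ large, so that $\overline{\veu} = -M_1M_2\dot{\vect{\Theta}}(\mu^\star)$ throughout the support and the cancellation above is legitimate.

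Finally, for \eqref{eq:critical_subsolution_in_truncated_domain}, the distributional jumps of $\underline{\veu}$ at the endpoints $z_i$ contribute nonpositive Dirac masses to $\cbR\underline{\veu}$ (the slope of $\underline{u}_i$ jumps from negative up to $0$), compatible with the required inequality; so it suffices to argue in the open support. Using $\underline{\veu} \leq \overline{\veu} \lesssim M_1M_2|z|\upe^{\mu^\star z}\veo$ and the uniform bound on $\veB'$, the right-hand side $\underline{\veu}\circ(\veB'\overline{\veu})$ is $O\bigl((M_1M_2)^2 z^2\upe^{2\mu^\star z}\bigr)$ componentwise, while the left-hand side $-\cbR\underline{\veu}$ is bounded below by $M_1M_2\beta\kappa_{\mu^\star + \gamma}\upe^{(\mu^\star + \gamma)z}$. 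The needed inequality thus reduces to $\beta\kappa_{\mu^\star + \gamma} \gtrsim M_1M_2 z^2\upe^{\mu^\star z/2}$ (using $\mu^\star + \gamma - 2\mu^\star = -\mu^\star/2$); since $z^2\upe^{\mu^\star z/2} \to 0$ as $z \to -\infty$, this becomes true provided $M_3$ is chosen large enough. The main obstacle, and essentially the only substantial quantitative step of the proof, is precisely this calibration of $M_3$ (depending on $M_1$, $M_2$, $\beta$ and the Harnack constants) so that the favorable asymptotic exponential domination translates into a uniform pointwise inequality over the whole support, uniformly in $(t,y)$.
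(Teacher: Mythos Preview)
Your proposal is correct and follows essentially the same route as the paper's proof. Both arguments rest on the three identities $\cbR\vect{\Theta}(\mu^\star)=\cbR\dot{\vect{\Theta}}(\mu^\star)=\vez$ and $\cbR\vect{\Theta}(\mu^\star+\gamma)=-\beta\,\vect{\Theta}(\mu^\star+\gamma)$ with $\beta>0$ (the paper phrases the last one via a Taylor remainder $g(\gamma)<0$), then compare the $O(z^2\upe^{2\mu^\star z})$ size of $\underline{\veu}\circ(\veB'\overline{\veu})$ against the $\upe^{(\mu^\star+\gamma)z}$ term coming from $\cbR\underline{\veu}$, and finally choose $M_3$ large to push the support of $\underline{\veu}$ far enough into the asymptotic regime; the only cosmetic difference is that the paper works on the fixed half-line $(-\infty,0]$ (making $\underline{\veu}$ vanish at $z=0$), whereas you track the moving support explicitly.
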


\begin{proof}
    Evaluating at $z=0$ and $(t,y)\in\R\times\R^{n-1}$, 
    \begin{equation*}
	\underline{\veu}(t,y,0) = M_1 M_2\left( \left( -\left(\frac{\partial\veu'_{\mu}}{\partial\mu}\right)_{|\mu=\mu^\star}(t,y,0)-M_3\veu'_{\mu^\star}(t,y,0)+\veu'_{\mu^\star+\gamma}(t,y,0)\right)\vee_{-\infty}\vez \right),
    \end{equation*}
    whence there exists $M_3^0$ such that, provided $M_3\geq M_3^0$, $\underline{\veu}(t,y,0)=\vez$. Subsequently,
    by definition of $\vee_{-\infty}$, $M_3\geq M_3^0$ implies $\underline{\veu}=\vez$ in $\R\times\R^{n-1}\times[0,+\infty)$.

    Also, by positivity of $\gamma$ and up to increasing again $M_3$, 
    $-M_3\vect{\Theta}(\mu^\star)+\vect{\Theta}(\mu^\star+\gamma)\leq\vez$ in 
    $\R\times\R^{n-1}\times(-\infty,0]$. Hence $\underline{\veu}\leq -M_1 M_2 \dot{\vect{\Theta}}(\mu^\star)$ in 
    $\R\times\R^{n-1}\times(-\infty,0]$.

    Up to increasing once more $M_3$, $\underline{\veu}\leq M_2\veo$ in $\R\times\R^{n-1}\times\R$, whence
    \eqref{eq:ordered_critical_subsolution_supersolution_in_truncated_domain} is proved.

    It remains to verify \eqref{eq:critical_subsolution_in_truncated_domain} provided $M_3$ is sufficiently large.
    In fact, it is sufficient to verify that $M_3$ can be chosen so large that, in $\R\times\R^{n-1}\times(-\infty,0]$,
    \begin{equation}
	(\cbR+\diag(\veB'\overline{\veu}))\left( -\dot{\vect{\Theta}}(\mu^\star)-M_3\vect{\Theta}(\mu^\star)+\vect{\Theta}(\mu^\star+\gamma)\right) \leq \vez.
	\label{eq:critical_subsolution_left_halfline}
    \end{equation}

    By \eqref{eq:zero_derivative_at_cstar} and the Taylor theorem,
    \begin{equation}
	\lambda_{1,\mu^\star+\gamma}=-c^\star\mu^\star-c^\star\gamma+\int_{\mu^\star}^{\mu^\star+\gamma}(\mu^\star+\gamma-\mu)\frac{\partial^2\lambda_{1,\mu}}{\partial\mu^2}\upd\mu.
        \label{eq:second_order_Taylor_expansion_lambda1mu}
    \end{equation}
    By virtue of the concavity of $\mu\mapsto\lambda_{1,\mu}$ (\textit{cf.} \cite[Theorem 1.3]{Girardin_Mazari_2022}), the integral remainder is nonpositive. 
    The concavity being in fact strict (\textit{cf.} \cite[Theorem 1.3]{Girardin_Mazari_2022}), it is zero if and only if $\gamma=0$. Since $\gamma=\mu^\star/2$, this is not the case.
    In other words, there exists a strictly concave function $g:\R\to\R$ such 
    that $g(0)=g'(0)=0$, $g(\gamma)<0$ and
    $\lambda_{1,\mu^\star+\gamma}\leq -c^\star\mu^\star-c^\star\gamma+g(\gamma)$.
    
    Hence we deduce from \eqref{eq:principal_eigenfunction_new_coordinates} that
    \begin{equation}
	\cbR\vect{\Theta}(\mu^\star+\gamma) \leq g(\gamma)\vect{\Theta}(\mu^\star+\gamma).
        \label{eq:Taylor_expansion_subsolution}
    \end{equation}
    This inequality does not depend on $M_1$, $M_2$ or $M_3$.

    By linearity, $\cbR\dot{\vect{\Theta}}(\mu^\star)=\vez$, $\cbR\vect{\Theta}(\mu^\star)=\vez$ and \eqref{eq:Taylor_expansion_subsolution},
    \eqref{eq:critical_subsolution_left_halfline} reduces to:
    \begin{equation}
	g(\gamma)\vect{\Theta}(\mu^\star+\gamma)+\left(\veB'\overline{\veu}\right)\circ\left( -\dot{\vect{\Theta}}(\mu^\star)-M_3\vect{\Theta}(\mu^\star)+\vect{\Theta}(\mu^\star+\gamma)\right) \leq \vez.
	\label{eq:reduced_critical_subsolution_left_halfline}
    \end{equation}

    On one hand, recall the definition of $\overline{\veu}$, \eqref{defi:critical_supersolution}, as well as the expansion
    \eqref{eq:expansion_derivative_mu_supersolution}. Considering the asymptotics as $z\to-\infty$, we find
    that, omitting the arguments in $(t,y)$,
    \begin{equation*}
	\left(\veB'\overline{\veu}\right)\circ\left( -\dot{\vect{\Theta}}(\mu^\star)-M_3\vect{\Theta}(\mu^\star)+\vect{\Theta}(\mu^\star+\gamma)\right)(z)
	\underset{z\to -\infty}{\sim} z^2\upe^{2\mu^\star z}\veB'\veu'_{\mu^\star}(z)\circ\veu'_{\mu^\star}(z).
    \end{equation*}
    Therefore, as $z\to-\infty$, this term is negligible in \eqref{eq:reduced_critical_subsolution_left_halfline} provided 
    $\mu^\star+\gamma<2\mu^\star$. Recalling again $\gamma=\mu^\star/2$, this is true indeed.

    On the other hand, away from $z=-\infty$, it suffices to increase again $M_3$
    to ensure the negativity of the left-hand side in \eqref{eq:reduced_critical_subsolution_left_halfline}.

    This ends the proof.
\end{proof}

\begin{rem}
    Contrarily to the construction in the super-critical case $c>c^\star$, here in the critical case the super-solution accounts
    for a certain amount of nonlinearity (the quadratic term in \eqref{eq:critical_supersolution_in_truncated_domain}). 
    This is required in order to circumvent the sign changes of $-\dot{\vect{\Theta}}(\mu^\star)$.
    Yet this nonlinearity has no coupling effect on the components of $\veu$, and therefore it preserves the maximum
    principle for cooperative systems. Thanks to this key fact, the rest of the proof is not altered significantly. 
    In the proof of Proposition \ref{prop:existence_profile_in_truncated_domain}, it suffices
    to replace the cooperative linear operator $\cbR+\diag(\veB'\ver)$ by the cooperative semilinear operator
    $\veu\mapsto\cbR\veu+\diag\left(b'_{i,i}\right)\veu^{\circ 2} +\diag\left( \left( \veB'-\diag\left(b'_{i,i}\right) \right)\ver \right)\veu$.
\end{rem}

\revision{\section{Proof of Corollary \ref{cor:space_homogeneous}}

\begin{proof}
Back to the beginning of the proof of Theorem \ref{thm:supercritical_existence}, we replace the    
first two paragraphs preceding Section \ref{sec:change_of_variables_and_coefficients} as follows.

\begin{quote}
In this section, we assume $\lambda_{1,\upp}<0$ and we prove the existence of pulsating traveling 
waves for all $e\in\Sn$ and $c\in(c^\star_e,+\infty)$.

The wave direction $e$ and the wave speed $c$ are fixed once and for all. 
In this whole section, time periodicity implicitly refers to $1$-periodicity and space periodicity
implicitly refers to space homogeneity.
\end{quote}

Then we replace the beginning of Section \ref{sec:change_of_variables_and_coefficients} up to Proposition
\ref{prop:transformed_WPPTW_problem} as follows.

\begin{quote}
Let $P\in\R^{n\times n}$ be the orthogonal matrix associated with the change of variables replacing $x$ (the coordinates
in the canonical basis of $\R^n$) into $x'$ (the coordinates in any new orthonormal basis $(e_\alpha)_{\alpha\in[n]}$ with
$e_n=e$), \textit{i.e.}
\begin{equation}
    \begin{pmatrix}x_1 \\ \vdots \\ x_{n-1} \\ x_n\end{pmatrix}
    = P\begin{pmatrix}x'_1 \\ \vdots \\ x'_{n-1} \\ x'_n\end{pmatrix}.
\end{equation}
Remarking that $\nabla_{x'} = P^{\upT}\nabla_x = P^{-1}\nabla_x$, we set
\begin{equation*}
    A_i':(t)\mapsto P^{\upT}A_i(t)P,
\end{equation*}
\begin{equation*}
    q_i':(t)\mapsto P^{\upT}(q_i+ce)(t),
\end{equation*}
\begin{equation*}
    \veL':(t)\mapsto\veL(t),
\end{equation*}
\begin{equation*}
    \veB':(t)\mapsto\veB^{\tl}(t),
\end{equation*}
\begin{equation}
    \cbR = \partial_t-\diag\left(\nabla_{x'}\cdot\left(A_i'\nabla_{x'}\right)\right)+\diag\left(q_i'\cdot \nabla_{x'}\right) - \veL'.
\end{equation}
By construction, if $\veu'$ is a solution of $\cbR\veu'=-\veB'\veu'\circ\veu'$, 
then $\veu:(t,x)\mapsto\veu'(t,P^{\upT}(x+cte))$ is a solution of $\cbQ\veu=-\veB\veu\circ\veu$.

In the following construction, we only work with the variables $(t,x')$, never with the variables $(t,x)$. 
In order to simplify notations, we will prefer the generic notation $x'=(y,z)\in\R^{n-1}\times\R$, 
\textit{i.e.} $x_\alpha'=y_\alpha$ if $\alpha\in[n-1]$ and $x_n'=z$. 
The notation $\nabla$ should be unambiguously understood as $\nabla_{x'}=\nabla_{(y,z)}$ 
in this construction. 
Also, in this coordinate system, $e$ becomes $e'=P^{\upT}e=(0,0,\dots,0,1)^{\upT}$,
$e\cdot\nabla_x$ becomes $e'\cdot\nabla=\partial_{x'_n}=\partial_z$ and $\nabla_x\cdot(A_i e)$ becomes 
$\nabla\cdot(A_i'e')$.

The family $(A_i')_{i\in[N]}$ of diffusivity matrices in the new coordinate system remains uniformly
elliptic, \textit{i.e.} it still satisfies \ref{ass:ellipticity}.
For any $\mu\geq 0$, the operator $\cbQ_{\mu}$ defined in \eqref{eq:Qmue} becomes 
\begin{equation*}
    \cbR_\mu = \cbR - \diag\left( 2\mu A_i'e'\cdot\nabla \right) - \diag\left(\mu^2 e'\cdot A_i' e'+\mu\nabla\cdot(A_i' e')-\mu q_i'\cdot e' \right).
\end{equation*}

The main interest of these changes of variables and coefficients is outlined in the following key proposition.
Recall that space and time periodicities are implicitly space homogeneity and $1$-periodicity.
\end{quote}

Everything else, up to the end of the proof of Theorem \ref{thm:supercritical_existence}, is unchanged 
-- with of course many notations and precautions that are now superfluous, in particular regarding dependencies with respect to $y$.

Next, at the beginning of the proof of Theorem \ref{thm:critical_existence}, the first sentence is replaced as follows:
\begin{quote}
In this section, we assume again $\lambda_{1,\upp}<0$, we fix $e\in\Sn$ and we prove the existence of a pulsating traveling wave
with speed $c_e^\star$.    
\end{quote}
and everything else up to the end of the proof is unchanged again.

\end{proof}}

\section{Acknowledgements}

L. G. acknowledges support from the ANR via the project Indyana under grant agreement ANR-21-CE40-0008.
L. G. and G. N. acknowledge support from the ANR via the project Reach under grant agreement ANR-23-CE40-0023-01. 
They thank the anonymous referees for valuable reports.

\bibliographystyle{plain}
\bibliography{ref}

\begin{thebibliography}{10}

\bibitem{Alfaro_Griette}
Matthieu Alfaro and Quentin Griette.
\newblock Pulsating fronts for {F}isher-{KPP} systems with mutations as models
  in evolutionary epidemiology.
\newblock {\em Nonlinear Anal. Real World Appl.}, 42:255--289, 2018.

\bibitem{Alikakos_Bates_Chen_1999}
Nicholas~D. Alikakos, Peter~W. Bates, and Xinfu Chen.
\newblock Periodic traveling waves and locating oscillating patterns in
  multidimensional domains.
\newblock {\em Trans. Amer. Math. Soc.}, 351(7):2777--2805, 1999.

\bibitem{Araposthathis_}
Ari Arapostathis, Mrinal~K. Ghosh, and Steven~I. Marcus.
\newblock Harnack{'}s inequality for cooperative weakly coupled elliptic
  systems.
\newblock {\em Comm. Partial Differential Equations}, 24(9-10):1555--1571,
  1999.

\bibitem{Berestycki_Hamel_2002}
Henri Berestycki and Fran{\c{c}}ois Hamel.
\newblock Front propagation in periodic excitable media.
\newblock {\em Comm. Pure Appl. Math.}, 55(8):949--1032, 2002.

\bibitem{Berestycki_Hamel_2012}
Henri Berestycki and Fran{\c{c}}ois Hamel.
\newblock Generalized transition waves and their properties.
\newblock {\em Comm. Pure Appl. Math.}, 65(5):592--648, 2012.

\bibitem{Berestycki_Hamel_Nadin}
Henri Berestycki, Fran{\c{c}}ois Hamel, and Gr{\'{e}}goire Nadin.
\newblock Asymptotic spreading in heterogeneous diffusive excitable media.
\newblock {\em J. Funct. Anal.}, 255(9):2146--2189, 2008.

\bibitem{Berestycki_Hamel_Roques_2}
Henri Berestycki, Fran{\c{c}}ois Hamel, and Lionel Roques.
\newblock Analysis of the periodically fragmented environment model. {II}.
  {B}iological invasions and pulsating travelling fronts.
\newblock {\em J. Math. Pures Appl. (9)}, 84(8):1101--1146, 2005.

\bibitem{Berestycki_Nadin_Perthame_Ryzhik}
Henri Berestycki, Gr{\'e}goire Nadin, Beno\^{i}t Perthame, and Lenya Ryzhik.
\newblock The non-local {F}isher{--}{KPP} equation: travelling waves and steady
  states.
\newblock {\em Nonlinearity}, 22(12):2813, 2009.

\bibitem{Cantrell_Cosner_03}
Robert~Stephen Cantrell and Chris Cosner.
\newblock {\em Spatial ecology via reaction-diffusion equations}.
\newblock Wiley Series in Mathematical and Computational Biology. John Wiley \&
  Sons, Ltd., Chichester, 2003.

\bibitem{Cantrell_Cosner_Yu_2018}
Robert~Stephen Cantrell, Chris Cosner, and Xiao Yu.
\newblock Dynamics of populations with individual variation in dispersal on
  bounded domains.
\newblock {\em Journal of Biological Dynamics}, 12(1):288--317, 2018.

\bibitem{Du_Li_Shen_2022}
Li-Jun Du, Wan-Tong Li, and Wenxian Shen.
\newblock Propagation phenomena for time-space periodic monotone semiflows and
  applications to cooperative systems in multi-dimensional media.
\newblock {\em J. Funct. Anal.}, 282(9):Paper No. 109415, 59, 2022.

\bibitem{Fang_Yu_Zhao}
Jian Fang, Xiao Yu, and Xiao-Qiang Zhao.
\newblock Traveling waves and spreading speeds for time-space periodic monotone
  systems.
\newblock {\em ArXiv e-prints}, apr 2015.

\bibitem{Fisher_1937}
Ronald~Aylmer Fisher.
\newblock The wave of advance of advantageous genes.
\newblock {\em Annals of eugenics}, 7(4):355--369, 1937.

\bibitem{Foldes_Polacik_2009}
Juraj F{\"{o}}ldes and Peter Pol{\'{a}}{\v{c}}ik.
\newblock On cooperative parabolic systems: Harnack inequalities and asymptotic
  symmetry.
\newblock {\em Discrete Contin. Dyn. Syst.}, 25(1):133--157, 2009.

\bibitem{Freidlin_1985}
Mark~I. Freidlin.
\newblock {\em Functional integration and partial differential equations},
  volume 109 of {\em Annals of Mathematics Studies}.
\newblock Princeton University Press, Princeton, NJ, 1985.

\bibitem{FrejacquesPhD}
Guillaume Fr\'{e}jacques.
\newblock {\em Ondes progressives pour des \'{e}quations de
  r\'{e}action-diffusion avec des coefficients p\'{e}riodiques en temps}.
\newblock PhD thesis, Universit\'{e} Aix-Marseille, 2005.

\bibitem{Gartner_Freidlin}
J\"{u}rgen G\"{a}rtner and Mark~I. Freidlin.
\newblock The propagation of concentration waves in periodic and random media.
\newblock {\em Dokl. Akad. Nauk SSSR}, 249(3):521--525, 1979.

\bibitem{Girardin_2017}
L\'{e}o Girardin.
\newblock Non-cooperative {Fisher{--}KPP} systems: Asymptotic behavior of
  traveling waves.
\newblock {\em Mathematical Models and Methods in Applied Sciences},
  28(06):1067--1104, 2018.

\bibitem{Girardin_2016_2}
L\'{e}o Girardin.
\newblock Non-cooperative {Fisher{--}KPP} systems: traveling waves and
  long-time behavior.
\newblock {\em Nonlinearity}, 31(1):108, 2018.

\bibitem{Girardin_2018}
L\'{e}o Girardin.
\newblock Two components is too simple: an example of oscillatory {Fisher--KPP}
  system with three components.
\newblock {\em Proceedings of the Royal Society of Edinburgh: Section A
  Mathematics}, pages 1--24, 2019.

\bibitem{Girardin_2023}
L\'{e}o Girardin.
\newblock Persistence, extinction, and spreading properties of noncooperative
  {F}isher-{KPP} systems in space-time periodic media.
\newblock {\em SIAM J. Math. Anal.}, 57(1):233--261, 2025.

\bibitem{Girardin_Mazari_2022}
L{\'e}o {Girardin} and Idriss {Mazari}.
\newblock Generalized principal eigenvalues of space-time periodic, weakly
  coupled, cooperative, parabolic systems.
\newblock {\em Memoirs of the French Mathematical Society}, in press.

\bibitem{Griette_Matano_2021}
Quentin Griette and Hiroshi Matano.
\newblock Propagation dynamics of solutions to spatially periodic
  reaction-diffusion systems with hybrid nonlinearity.
\newblock {\em arXiv e-prints}, page arXiv:2108.10862, August 2021.

\bibitem{Griette_Raoul}
Quentin Griette and Ga{\"{e}}l Raoul.
\newblock Existence and qualitative properties of travelling waves for an
  epidemiological model with mutations.
\newblock {\em J. Differential Equations}, 260(10):7115--7151, 2016.

\bibitem{Gueguezo_Doumate_Salako_2024}
H.~M. Gueguezo, T.~J. Doumat\`e, and R.~B. Salako.
\newblock Global dynamics of a two-stage structured diffusive population model
  in time-periodic and spatially heterogeneous environments.
\newblock {\em Stud. Appl. Math.}, 153(4):Paper No. e12750, 39, 2024.

\bibitem{Hamel_2008}
Fran{\c{c}}ois Hamel.
\newblock Qualitative properties of monostable pulsating fronts: exponential
  decay and monotonicity.
\newblock {\em J. Math. Pures Appl. (9)}, 89(4):355--399, 2008.

\bibitem{Huang_Wu_Zhao_2025}
Mingdi Huang, Shi-Liang Wu, and Xiao-Qiang Zhao.
\newblock Propagation dynamics for time-space periodic and partially degenerate
  reaction-diffusion systems with time delay.
\newblock {\em J. Dynam. Differential Equations}, 37(1):475--507, 2025.

\bibitem{Hudson_Zinner_1995}
W.~Hudson and B.~Zinner.
\newblock Existence of traveling waves for reaction diffusion equations of
  {F}isher type in periodic media.
\newblock In {\em Boundary value problems for functional-differential
  equations}, pages 187--199. World Sci. Publ., River Edge, NJ, 1995.

\bibitem{KPP_1937}
Andrei~N. Kolmogorov, I.~G. Petrovsky, and N.~S. Piskunov.
\newblock {\'Etude} de l'\'equation de la diffusion avec croissance de la
  quantit\'e de mati\`ere et son application \`a un probl\`eme biologique.
\newblock {\em Bulletin Universit\'e d'\'Etat {\`{a}} Moscou}, 1:1--25, 1937.

\bibitem{Li_Weinberger_}
Bingtuan Li, Hans~F. Weinberger, and Mark~A. Lewis.
\newblock Spreading speeds as slowest wave speeds for cooperative systems.
\newblock {\em Math. Biosci.}, 196(1):82--98, 2005.

\bibitem{Lieberman_2005}
Gary~M. Lieberman.
\newblock {\em Second order parabolic differential equations}.
\newblock World Scientific Publishing Co., Inc., River Edge, NJ, 1996.

\bibitem{Morris_Borger_Crooks}
Aled Morris, Luca B{\"{o}}rger, and Elaine C.~M. Crooks.
\newblock Individual variability in dispersal and invasion speed.
\newblock {\em Mathematics}, 7(9), 2019.

\bibitem{Muratov_2004}
C.~B. Muratov.
\newblock A global variational structure and propagation of disturbances in
  reaction-diffusion systems of gradient type.
\newblock {\em Discrete Contin. Dyn. Syst. Ser. B}, 4(4):867--892, 2004.

\bibitem{Muratov_Novaga_2008}
C.~B. Muratov and M.~Novaga.
\newblock Front propagation in infinite cylinders. {I}. {A} variational
  approach.
\newblock {\em Commun. Math. Sci.}, 6(4):799--826, 2008.

\bibitem{Nadin_2009}
Gr\'{e}goire Nadin.
\newblock Traveling fronts in space-time periodic media.
\newblock {\em J. Math. Pures Appl. (9)}, 92(3):232--262, 2009.

\bibitem{Nolen_Rudd_Xin}
James Nolen, Matthew Rudd, and Jack Xin.
\newblock Existence of {KPP} fronts in spatially-temporally periodic advection
  and variational principle for propagation speeds.
\newblock {\em Dyn. Partial Differ. Equ.}, 2(1):1--24, 2005.

\bibitem{Protter_Weinberger}
Murray~H. Protter and Hans~F. Weinberger.
\newblock {\em Maximum principles in differential equations}.
\newblock Springer-Verlag, New York, 1984.
\newblock Corrected reprint of the 1967 original.

\bibitem{Ress_Traulsen_Pichugin_2022}
Vanessa Ress, Arne Traulsen, and Yuriy Pichugin.
\newblock Eco-evolutionary dynamics of clonal multicellular life cycles.
\newblock {\em eLife}, 11:e78822, sep 2022.

\bibitem{Risler_2006}
Emmanuel Risler.
\newblock Global convergence toward traveling fronts in nonlinear parabolic
  systems with a gradient structure.
\newblock {\em Ann. Inst. H. Poincar{\'{e}} Anal. Non Lin{\'{e}}aire},
  25(2):381--424, 2008.

\bibitem{Shigesada_Kawasaki_Teramoto_1986}
Nanako Shigesada, Kohkichi Kawasaki, and Ei~Teramoto.
\newblock Traveling periodic waves in heterogeneous environments.
\newblock {\em Theoret. Population Biol.}, 30(1):143--160, 1986.

\bibitem{Volpert_Volpert_Volpert}
Aizik~I. Volpert, Vitaly~A. Volpert, and Vladimir~A. Volpert.
\newblock {\em Traveling wave solutions of parabolic systems}, volume 140 of
  {\em Translations of Mathematical Monographs}.
\newblock American Mathematical Society, Providence, RI, 1994.
\newblock Translated from the Russian manuscript by James F. Heyda.

\bibitem{Wang_2011}
Haiyan Wang.
\newblock Spreading speeds and traveling waves for non-cooperative
  reaction-diffusion systems.
\newblock {\em J. Nonlinear Sci.}, 21(5):747--783, 2011.

\bibitem{Weinberger_200}
Hans~F. Weinberger.
\newblock On spreading speeds and traveling waves for growth and migration
  models in a periodic habitat.
\newblock {\em J. Math. Biol.}, 45(6):511--548, 2002.

\bibitem{Xin_1991}
Xue Xin.
\newblock Existence and stability of traveling waves in periodic media governed
  by a bistable nonlinearity.
\newblock {\em J. Dynam. Differential Equations}, 3(4):541--573, 1991.

\bibitem{Xin_1991_2}
Xue Xin.
\newblock Existence and uniqueness of travelling waves in a reaction-diffusion
  equation with combustion nonlinearity.
\newblock {\em Indiana Univ. Math. J.}, 40(3):985--1008, 1991.

\end{thebibliography}

\end{document}